 \newtheorem{thm}{Theorem}[section]
 \numberwithin{equation}{section} 
 \numberwithin{figure}{section} 
 \theoremstyle{plain}
  \newtheorem*{thm*}{Theorem}
 \theoremstyle{definition}
 \newtheorem*{defn*}{Definition}
 \theoremstyle{plain}
 \newtheorem{prop}[thm]{Proposition} 
 \theoremstyle{remark}
 \newtheorem{rem}[thm]{Remark}
\theoremstyle{plain}
 \newtheorem{claim}[thm]{Claim}
\theoremstyle{remark}
 \newtheorem{cor}[thm]{Corollary}
 \theoremstyle{plain}
 \newtheorem{lem}[thm]{Lemma} 
 \theoremstyle{definition}
 \newtheorem{defn}[thm]{Definition}
\newtheorem*{acknowledgment}{Acknowledgment}
\begin{document}
\title{``Set-theoretical'' solutions
of the quantum Yang-Baxter equation and a class of  Garside groups.}
\author{Fabienne Chouraqui}

\begin{abstract}
We establish a one-to-one correspondence between structure groups of non-degenerate, involutive  and braided ``set-theoretical'' solutions of the quantum Yang-Baxter equation  and Garside groups with a certain presentation. Moreover,  we show that the solution is indecomposable if  and only if its  structure group is  a $\Delta-$pure Garside group.
\end{abstract}
\maketitle

\section{Introduction}
The quantum Yang-Baxter equation is an equation in the field of mathematical physics and it lies in the foundation of the theory of quantum groups.
Let $R:V  \otimes V \rightarrow V  \otimes V$ be a linear operator, where $V$ is a vector space. The quantum Yang-Baxter equation is the equality $R^{12}R^{13}R^{23}=R^{23}R^{13}R^{12}$ of linear transformations on $V  \otimes V \otimes V$, where $R^{ij}$ means $R$ acting on the $i-$th and $j-$th components.
In this paper, we work with  ``set-theoretical'' solutions of this equation, that is solutions for which $V$ is a vector space spanned by a set $X$ and $R$ is the linear operator induced by a mapping $X \times X \rightarrow X \times X$. The study of these was suggested by Drinfeld \cite{drinf}.
In \cite{etingof}, the authors study ``set-theoretical'' solutions $(X,S)$ of the quantum Yang-Baxter equation satisfying the following conditions: non-degenerate, involutive and braided. To each such solution, they associate a group called the structure group and they show that this group satisfies some properties.
Our work is mostly inspired by the paper of Etingof and al. \cite{etingof} and we use the same notation as their.\\
In this paper, we establish  a one-to-one correspondence between non-degenerate, involutive and braided ``set-theoretical'' solutions of the quantum Yang-Baxter equation (up to isomorphism) and Garside presentations which satisfy some additional conditions \\(up to the t-isomorphism defined below).

The main theorem is the following:
\begin{thm*}\textbf{A}
Let $(X,S)$ be  a non-degenerate, involutive and braided ``set-theoretical'' solution of the quantum Yang-Baxter equation, where $X$ is a finite set. Let $G$ be the structure group corresponding to $(X,S)$.\\
Then $G$ is Garside.\\
Conversely, let $M=\operatorname{Mon} \langle X\mid R \rangle$ be a  Garside monoid such that:\\
1) $X=\{x_{1},..,x_{n}\}$.\\
2) There are $n(n-1)/2$ defining relations in $R$.\\
3) Each side of a relation in $R$ has length 2.\\
4) If the  word $x_{i}x_{j}$ appears in $R$, then it appears only once.\\
 Then there exists a function $S: X \times X \rightarrow X \times X$ such that $(X,S)$ is  a non-degenerate, involutive and braided ``set-theoretical'' solution and $G=\operatorname{Gp} \langle X\mid R \rangle$ is its structure group.
\end{thm*}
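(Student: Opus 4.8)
The plan is to run both implications through a single dictionary: on one side the defining properties of $(X,S)$, on the other the combinatorial criteria of Dehornoy for a monoid given by a \emph{complemented presentation} (word reversing, the ``cube condition''), with the structural results on the structure monoid obtained in \cite{etingof} supplying a concrete Garside element. \textbf{Forward direction.} Let $(X,S)$ be non-degenerate, involutive and braided with $|X|=n$, and write $S(x,y)=(\sigma_x(y),\gamma_y(x))$, so that $M=\operatorname{Mon}\langle X\mid R\rangle$ and $G=\operatorname{Gp}\langle X\mid R\rangle$, where $R$ consists of the relations $xy=\sigma_x(y)\,\gamma_y(x)$. Three features are immediate: the presentation is \emph{homogeneous}, so $M$ is $\mathbb N$-graded by length, hence atomic, left- and right-Noetherian, with $1$ its only unit; since $\sigma_x$ is a bijection for each $x$ there is, for each ordered pair $(x_i,x_j)$ with $i\neq j$, exactly one relation whose two sides begin with $x_i$ and with $x_j$, and dually (via $\gamma_y$) for terminal letters, so the presentation is left- and right-complemented; and involutivity groups the $n(n-1)$ off-diagonal words into $n(n-1)/2$ relations. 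The key step is to show that the braid (Yang--Baxter) identity for $S$ is exactly the cube condition for this complemented presentation on every triple of generators. Granting this, Dehornoy's criterion gives that $M$ is cancellative, embeds in $G$, and that any two elements of $M$ have left and right lcm's and gcd's. It remains to exhibit a Garside element; the candidate is $\Delta:=v(1,\dots,1)$, where $v\colon\mathbb N^n\to M$ is the bijective $1$-cocycle furnished by \cite{etingof}. One then checks that the set of left divisors of $\Delta$ coincides with its set of right divisors, equals $\{v(\varepsilon):\varepsilon\in\{0,1\}^n\}$, is finite, contains $X$, and is closed under lcm and gcd; here involutivity is what makes a suitable power of $\Delta$ central and forces the two divisor sets to agree. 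Then $M$ is a Garside monoid and $G$ is a Garside group.

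\textbf{Converse direction.} Let $M=\operatorname{Mon}\langle X\mid R\rangle$ be Garside satisfying 1)--4). First I would read off the shape of $R$. Cancellativity shows each relation has distinct initial, and distinct terminal, letters. The lattice property shows that the map sending a relation to its unordered pair of initial letters is injective: if $x_ix_j=x_kx_l$ and $x_ix_{j'}=x_kx_{l'}$ both lay in $R$, then the left-lcm $x_i\vee x_k$ is left-divisible by $x_i$ and by $x_k$, has length $2$, hence equals $x_ix_j$ and $x_ix_{j'}$, forcing $j=j'$; the symmetric argument shows the pair of \emph{terminal} letters is also injective in the relation. By 2) there are as many relations as unordered pairs from $\{1,\dots,n\}$, so both maps are bijections. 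A short count then pins down the $n$ words of $X\times X$ not occurring in $R$: for a fixed initial letter $x_i$ the $n-1$ relations having $x_i$ as an initial letter contribute, by 4), $n-1$ distinct words beginning with $x_i$, so exactly one word $x_ix_{\mu(i)}$ beginning with $x_i$ is missing, and the symmetric count for terminal letters forces $\mu$ to be a permutation. Now define $S\colon X\times X\to X\times X$ by $S(x_i,x_j)=(x_k,x_l)$ and $S(x_k,x_l)=(x_i,x_j)$ whenever $x_ix_j=x_kx_l$ is the unique relation with those data, and $S(x_i,x_j)=(x_i,x_j)$ on the $n$ remaining words. Then $S$ is an involution, the two bijectivity statements give that every $\sigma_x$ and every $\gamma_y$ is a bijection of $X$ (so $S$ is non-degenerate), and the structure group of $(X,S)$ is $\operatorname{Gp}\langle X\mid R\rangle$ by construction, the $n$ extra words contributing only trivial relations. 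Finally, the braid identity for $S$ follows by running ``braid relation $\Leftrightarrow$ cube condition'' in reverse: a Garside monoid is in particular locally Garside, so the cube condition holds on every triple, hence the Yang--Baxter identity holds. This produces the required solution.

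\textbf{Main obstacle.} The heart of the argument, used in both directions, is the precise equivalence between the Yang--Baxter identity for $S$ and the cube condition for the complemented presentation; making the bookkeeping of the maps $\sigma_x,\gamma_y$ agree with the word-reversing diagrams is the delicate part. In the forward direction there is the additional, genuinely non-formal step of verifying that $\Delta=v(1,\dots,1)$ is a Garside element, i.e.\ that its left and right divisor sets coincide and form a finite sublattice generating $M$, which is exactly where involutivity and the finiteness of $X$ enter essentially.
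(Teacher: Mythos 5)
Your overall architecture is the same as the paper's: both directions are routed through Dehornoy's complemented-presentation criterion (Theorem \ref{gars_critere}), with the Yang--Baxter structure feeding the cube condition, and your converse bookkeeping (uniqueness of right lcms forcing the initial- and terminal-letter maps to be bijections, the count of the $n$ missing words, setting $S(i,j)=(i,j)$ on them) matches the paper's proof of Theorem \ref{garside_struct_group}. But the proposal leaves the mathematical core unproved and misstates two points. First, the asserted equivalence ``braid identity $=$ cube condition'' is exactly what must be established, and it is not a formal dictionary: in the forward direction the paper computes $x_i\setminus x_j=g_i^{-1}(j)$ (Lemma \ref{form_compl}) and reduces the cube identity on generators to $g_ig_{g_i^{-1}(k)}=g_kg_{g_k^{-1}(i)}$, which uses braidedness \emph{together with} involutivity (Lemma \ref{formule}); in the converse direction braidedness is not obtained by running one cube condition ``in reverse'' but from the right \emph{and} left cube conditions jointly with involutivity and non-degeneracy, and even then extracting the key identities (A) and (B) requires a case analysis at the excluded indices (Cases (1)/(2) in Lemma \ref{cor_Xcoherent_equations}) before Proposition \ref{coherence_implies_braided} recovers the three braided identities, including the linking relation, which your sketch never addresses. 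Second, ``Dehornoy's criterion gives that $M$ is cancellative'' is too quick: coherence of the right complement yields only left cancellativity and right lcms (Proposition \ref{coh_c1,2}); right cancellativity $(\tilde{C_{1}})$ needs a separate argument --- the paper proves it by a Garside-style induction resting on Lemma \ref{lem_cancel} (Claim \ref{M_right_cancel} and the appendix) --- or else you must also verify the left cube condition from braidedness, which you do not do.

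Your treatment of the Garside element is a further, unnecessary detour. Once $(C_{0})$--$(C_{3})$ are checked, Theorem \ref{gars_critere} already delivers a Garside monoid; condition $(C_{3})$ is immediate here because $X\cup\{\epsilon\}$ is closed under $\setminus$ (Claim \ref{cl_compl}). There is therefore no need to import the bijective $1$-cocycle of \cite{etingof} and to verify that $v(1,\dots,1)$ has coinciding left and right divisor sets equal to $\{v(\varepsilon):\varepsilon\in\{0,1\}^{n}\}$ --- a verification you only gesture at --- and the remark that involutivity makes a power of $\Delta$ central is beside the point, since centrality is not part of being a Garside element. The explicit identification of a Garside element with the right lcm of the atoms is a separate, later result in the paper (Theorem \ref{thm_delta_lcm_atoms}, via Proposition \ref{prop_Simples}), not an ingredient of Theorem A.
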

The main idea in the proof is that we can express the right and left complement on the generators in terms of the functions which define $(X,S)$.
We define a \textbf{tableau monoid} to be a monoid $M=\operatorname{Mon} \langle X\mid R \rangle$  satisfying the conditions (1) and (3) from Theorem A. The reason of the name is that it can be presented by a table. We say that two tableau monoids $M=\operatorname{Mon} \langle X\mid R \rangle$ and $M'=\operatorname{Mon} \langle X'\mid R' \rangle$ are \textbf{t-isomorphic} if there exists a bijection $s:X \rightarrow X'$ such that $x_{i}x_{j}=x_{k}x_{l}$ is a relation in $R$ if and only if $s(x_{i})s(x_{j})=s(x_{k})s(x_{l})$ is a relation in $R'$. Clearly, if two tableau monoids are t-isomorphic then they are isomorphic and the definition can be enlarged to groups.
We show that if two non-degenerate, involutive and braided ``set-theoretical'' solutions  are isomorphic, then their structure groups (monoids) are t-isomorphic tableau groups (monoids) and conversely t-isomorphic Garside tableau monoids (satisfying additionally the conditions (2) and (4) from Theorem A) yield isomorphic non-degenerate, involutive and braided ``set-theoretical'' solutions.\\
Let $(X,S)$ be  a non-degenerate, involutive and braided ``set-theoretical'' solution of the quantum Yang-Baxter equation, where $X$ is a finite set. Let $G$ be the structure group corresponding to $(X,S)$.\\
 We show that:
 \begin{thm*}\textbf{B}
 $(X,S)$ is indecomposable if and only if $G$ is Garside $\Delta-$pure.
\end{thm*}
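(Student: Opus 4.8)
The plan is to deduce Theorem~B by translating both of its conditions into one and the same combinatorial statement about $X$: that $X$ admits no partition $X=Y\sqcup Z$ into two nonempty subsets each closed under $S$. Exactly as for Theorem~A, the translation rests on the formulas expressing the left and right complements of the generators of the structure monoid $M$ in terms of the maps $\sigma_x,\tau_x$ attached to $S$ (where $S(x,y)=(\sigma_x(y),\tau_y(x))$).

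On the ``solution'' side this reformulation is essentially a definition: by \cite{etingof}, $(X,S)$ is decomposable precisely when such an $S$-closed partition exists --- equivalently, when the subgroup $\langle\sigma_x\mid x\in X\rangle$ of $\operatorname{Sym}(X)$, i.e.\ the natural action of $G$ on $X$, is intransitive --- so ``$(X,S)$ indecomposable'' means exactly ``no such partition''. On the Garside side the work is to show that $G$ is $\Delta$-pure if and only if $\Delta$ admits \emph{no} factorisation $\Delta=\Delta_Y\Delta_Z$ with $X=Y\sqcup Z$, $Y,Z\neq\emptyset$, $\Delta_Y\in\langle Y\rangle^+$, $\Delta_Z\in\langle Z\rangle^+$, where $\Delta_Y$ and $\Delta_Z$ are the Garside elements of the submonoids generated by $Y$ and by $Z$ respectively; this is the point at which the definition of $\Delta$-pure must be unwound carefully.

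Granting that, it remains to match the two kinds of factorisation, i.e.\ to prove: $(X,S)$ is decomposable if and only if $\Delta$ factors as above. For the forward direction, an $S$-closed partition $X=Y\sqcup Z$ yields restricted solutions $(Y,S|_Y),(Z,S|_Z)$ which are again non-degenerate, involutive and braided, and --- because by the explicit complement formulas the complement of two $Y$-letters is a word in $Y$-letters --- their structure monoids are exactly the submonoids $\langle Y\rangle^+$ and $\langle Z\rangle^+$ of $M$; one then checks that $\Delta_Y\Delta_Z$ is a common multiple of all atoms of $M$ of length $|X|$, hence equals $\Delta$. For the converse, a factorisation $\Delta=\Delta_Y\Delta_Z$ is fed back through the complement formulas to show that for $y\in Y$ and $z\in Z$ the relevant complement words stay in $Y$, resp.\ $Z$; this forces $\sigma_x(Y)=Y$ and $\tau_x(Y)=Y$ for every $x$, so $Y$ (and likewise $Z$) is $S$-closed, and since $Y,Z\neq\emptyset$ the solution is decomposable.

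The main obstacle is this converse step: turning a single ``global'' identity $\Delta=\Delta_Y\Delta_Z$ in $M$ into the ``local'' stability $\sigma_x(Y)=Y$, $\tau_x(Y)=Y$ for all $x$. Here the involutive hypothesis is indispensable --- it is what ties $\tau$ to $\sigma$ (via $\tau_y(x)=\sigma_{\sigma_x(y)}^{-1}(x)$ together with the braid identities), so that the two stability conditions are equivalent and can be extracted together --- and the mixed products $yz$ with $y\in Y$, $z\in Z$ must be pushed through $\Delta_Y\Delta_Z$ with care. Once the displayed equivalence is established, Theorem~B follows at once by combining it with the two reformulations above.
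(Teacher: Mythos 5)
Your argument hinges on an equivalence you never establish: that $M$ fails to be $\Delta$-pure if and only if $\Delta$ factors as $\Delta=\Delta_Y\Delta_Z$ over a partition $X=Y\sqcup Z$ of the atoms, with $\Delta_Y,\Delta_Z$ the Garside elements of the submonoids generated by $Y$ and $Z$. This is not a matter of ``unwinding the definition carefully'': Picantin defines $\Delta$-purity by the condition $\Delta_x=\Delta_y$ for all atoms, where $\Delta_x=\vee\{b\setminus x;\ b\in M\}$, and nothing in that definition speaks of factorizations of $\Delta$. The implication ``not $\Delta$-pure $\Rightarrow$ such a factorization exists'' is essentially Picantin's crossed-product decomposition (or, for this special class of quadratic monoids, essentially the theorem you are trying to prove); your proposal neither proves it nor cites what would be needed. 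In addition, of the matching step ``$\Delta=\Delta_Y\Delta_Z$ $\Leftrightarrow$ $(X,S)$ decomposable'', the converse direction --- the one you yourself call the main obstacle --- is only sketched (a single identity in $M$ has to be converted into stability of $Y$ and $Z$ under all the maps $g_x,f_x$, and no actual mechanism for this is given). So the proof does not close: its two load-bearing steps are both left open.

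The paper avoids factorizations of $\Delta$ altogether by computing the sets $M\setminus x$ directly. The key point (Remark \ref{rem_interpret_compl_g}) is that for a word $w$ and a generator $x$ one has $w\setminus x=g^{-1}_{w}(x)$, i.e.\ right complementation of atoms realizes the right action of $M$ on $X$ through the maps $g^{-1}_{\cdot}$. If $(X,S)$ is indecomposable, Lemma \ref{transitive_M} (adapting Etingof's transitivity criterion from $G$ to the monoid $M$, a subtlety your sketch does not address) shows this action is transitive, whence $M\setminus x=X$ and $\Delta_x=\Delta$ for every atom $x$, i.e.\ $M$ is $\Delta$-pure; conversely, a decomposition $X=Y\cup Z$ into invariant subsets gives $M\setminus y\subseteq Y$ and $M\setminus z\subseteq Z$ (Lemmas \ref{lem_invariantYY} and \ref{lem_YYimpliqMY}), so $\Delta_y\neq\Delta_z$ and $M$ is not $\Delta$-pure. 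If you wish to keep your factorization picture you must supply a complete proof of your characterization of $\Delta$-purity; the direct route via $w\setminus x=g^{-1}_{w}(x)$ is considerably shorter and is the one the paper takes.
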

\begin{thm*}\textbf{C}
(i) The right least common multiple of the generators is a Garside element. That means that  $G$ is Garside in the sense of \cite{deh_Paris}. \\
(ii) The (co)homological dimension of the structure group $G$ is bounded by the cardinal of $X$.
\end{thm*}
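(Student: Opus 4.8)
The plan is to read off both statements from the Garside structure on $M=\operatorname{Mon}\langle X\mid R\rangle$ obtained in the proof of Theorem A, together with the explicit form of the left and right complements of the generators in terms of the bijections defining $(X,S)$.

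\emph{Part (i).} In a Garside monoid the set of left (resp.\ right) divisors of any element is finite, and every atom divides any Garside element; hence the right least common multiple $\Delta$ of $x_{1},\dots,x_{n}$ exists, and its divisor set is finite and, containing $X$, generates $M$. So it suffices to show that $\Delta$ is \emph{balanced}, i.e.\ that its set of left divisors coincides with its set of right divisors. Here I would use the description recalled from Theorem A: for $i\neq j$ the word $x_{i}x_{j}$ is, after one application of the unique relation of $R$ beginning with $x_{i}$, turned into a word beginning with $x_{j}$, so $\operatorname{lcm}(x_{i},x_{j})$ already has length $2$ and $x_{i}\backslash x_{j}$ is again a generator; iterating, $\Delta$ has length $n$, its left divisors are exactly the right lcms of the subsets of $X$, and dually its right divisors are exactly the left lcms of the subsets of $X$. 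The point is then to identify these two $2^{|X|}$-element families, and this is the place where $(X,S)$ being \emph{involutive} is indispensable: involutivity means precisely that the defining relations are reversible — the relation $x_{i}x_{j}=x_{k}x_{l}$, where $(x_{k},x_{l})=S(x_{i},x_{j})$, is also the relation carried by the pair $(x_{k},x_{l})$ — and this reversibility is exactly what matches the computation of $\Delta$ as an iterated right lcm with its computation as an iterated left lcm, subset by subset, so that each right lcm of a subset of $X$ occurs also as a right divisor of $\Delta$ (with an appropriate left cofactor). Hence the left and right divisors of $\Delta$ coincide, $\Delta$ is a Garside element, and since $G=\operatorname{Gp}\langle X\mid R\rangle$ is the group of fractions of the Garside monoid $M$, the group $G$ is Garside with Garside element the right lcm of the generators — that is, in the sense of \cite{deh_Paris}.

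\emph{Part (ii).} By condition (3) every relation of $R$ preserves word length, so $M$ is homogeneous: word length induces a grading on $M$, and by (i) the Garside element $\Delta$ has degree $n=|X|$. Consequently every strictly increasing chain of divisors $1=\delta_{0}\prec\delta_{1}\prec\cdots\prec\delta_{m}=\Delta$ satisfies $0<\deg\delta_{1}<\cdots<\deg\delta_{m}=n$, so $m\le n$: the lattice of simple elements has height $n$, and $M$ has $n$ atoms. Now the homological theory of Garside groups applies: a Garside group admits a finite free resolution of $\mathbb{Z}$ over $\mathbb{Z}G$ (Dehornoy--Lafont) whose length is the number of atoms, equivalently a finite-dimensional classifying space (Charney--Meier--Whittlesey) whose dimension is the height of the simple lattice; in our case both equal $n$, so the (co)homological dimension of $G$ is at most $|X|$. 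I would also note that the bound is sharp: by \cite{etingof} the structure group $G$ has a finite-index subgroup isomorphic to $\mathbb{Z}^{n}$, and $G$ is torsion-free (it has just been shown to have finite cohomological dimension), so by Serre's theorem $\operatorname{cd}(G)=\operatorname{cd}(\mathbb{Z}^{n})=n$.

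\emph{Main obstacle.} Part (ii) is essentially bookkeeping once $\deg\Delta=n$ is established. The substance is in part (i): the \emph{existence} of the right lcm $\Delta$ of the generators is immediate from Theorem A, but its \emph{balancedness} is not formal, and extracting it is exactly what involutivity is for. Making precise the matching ``subset by subset'' of the left-divisor and right-divisor families of $\Delta$ — i.e.\ pinning down, for each $Y\subseteq X$, which complementary subset governs the occurrence of the right lcm of $Y$ as a right divisor of $\Delta$ — is the technical heart of the argument, and is where the reversibility of the relations of a non-degenerate involutive solution must be exploited in full.
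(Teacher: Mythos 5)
Your part (i) contains the genuine gap. You reduce the statement to showing that $\Delta=x_{1}\vee\dots\vee x_{n}$ is balanced, and you propose to do this by (a) identifying the left divisors of $\Delta$ with the right lcms of subsets of $X$, (b) identifying its right divisors with the left lcms of subsets of $X$, and (c) matching the two families ``subset by subset'' via involutivity. None of these steps is actually carried out, and (c) is precisely what you concede in your closing paragraph still has to be ``made precise'' --- so the balancedness of $\Delta$ is asserted rather than proved. Step (a) is itself the substance of the theorem: it is in effect Proposition \ref{prop_Simples}, which rests on the closure property $M\setminus X\subseteq X\cup\{\epsilon\}$ (Lemma \ref{lem_MXin X}) and the identities of Lemma \ref{formules_complement_lcm}; step (b) needs the left-complement calculus that the paper only develops in Section 8 (Lemmas \ref{compl_gauche} and \ref{formule_gauche}); and the mechanism you invoke for (c) is not right as stated: involutivity gives the uniqueness statements of Claim \ref{cl_compl} and Lemma \ref{lem_cancel} (each pair of generators heads, resp.\ ends, exactly one relation), but turning that into the coincidence of the left- and right-divisor sets of $\Delta$ --- essentially the equality of the left and right lcm of $X$, which Gateva-Ivanova establishes only in the square-free case --- is an argument you have not supplied. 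The paper avoids this verification altogether: since $M$ is already known to be Garside, the right lcm of the set of simples $\chi$ is a Garside element by the general theory, and the whole proof consists in showing $\chi=\overline{X}^{\vee}\cup\{\epsilon\}$, whence that Garside element is the right lcm of the atoms; no direct check of the balancedness axiom is ever made.

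Your part (ii) is in substance the paper's argument: bound the norm of the simples by the length of $\Delta$ (homogeneity of the relations makes norm equal word length) and apply the Dehornoy--Lafont resolution, i.e.\ Corollary \ref{limiter_dimension}, or equivalently the Charney--Meier--Whittlesey complex. Two caveats. First, for the bound $\operatorname{cd}(G)\le n$ you only need $\deg\Delta\le n$, which already follows from Lemma \ref{lem_MXin X}; your claim inside part (i) that $\Delta$ has length exactly $n$ (the paper's Theorem \ref{thm_Garside_length_n}) is not obtained by ``iterating'' --- it requires showing that the successive complements $(x_{i_{1}}\vee\dots\vee x_{i_{k-1}})\setminus x_{i_{k}}$ are nonempty, which is Lemma \ref{lem_compl_egal} and is not automatic. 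Second, ``a resolution whose length is the number of atoms'' is not the correct general statement of the Dehornoy--Lafont bound, which is in terms of the norm of the elements of $\chi$; in this homogeneous situation the two happen to agree, but the argument should be phrased via the norm bound. These points are minor compared with the missing balancedness argument in (i), on which your (ii) also depends through the Garside structure of $\Delta$.
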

The paper is organized as follows:
In section $2$, we give some preliminaries on ``set-theoretical'' solutions and in section $3$ we give preliminaries on Garside monoids and $\Delta-$pure Garside monoids.
In section $4$, we show that the structure group $G$ of a non-degenerate, involutive and braided ``set-theoretical'' solution is Garside, using the criteria developed by Dehornoy in \cite{deh_francais}.
This implies that $G$ is torsion-free from \cite{deh_torsion} and biautomatic from \cite{deh_francais}.
In section $5$, we show that  the right least common multiple of the generators is a Garside element. That means that  $G$ is Garside in the sense of \cite{deh_Paris}. In section $6$, we show that the (co)homological dimension of the structure group $G$ of a non-degenerate, involutive and braided ``set-theoretical'' solution is bounded by the cardinal of $X$. In section $7$,  we show that a non-degenerate, involutive and braided ``set-theoretical'' solution is indecomposable if and only if its structure group is Garside $\Delta-$pure, using the terminology of Picantin in \cite{picantin}.
In section $8$, we define a tableau monoid (group) to be a monoid (group) such that the relations are quadratic, it can be presented by a table.
We establish the converse implication, that is  a Garside tableau monoid satisfying some additional conditions defines a non-degenerate, involutive and braided ``set-theoretical'' solution of the quantum Yang-Baxter equation.
In section $9$, we consider the special case of  permutation solutions that are not involutive and we show that their structure group is Garside. We could not enlarge this result to general solutions.
In section $10$, we calculate the Garside element for  permutation solutions.
The last section is an appendix which contains the proof of the right  cancellativity of the monoid with the same presentation as the structure group of a non-degenerate, involutive and braided ``set-theoretical'' solution of the quantum Yang-Baxter equation. \\
We remark that independently, using different methods, in  \cite{gateva+van}, the authors define monoids and groups of left and right I-type and they show that they yield solutions to the quantum Yang-Baxter equation. They show also that a monoid of left I-type is cancellative and has a group of fractions that is torsion-free and Abelian-by-finite.
In \cite{jespers+okninski}, the authors extend the results of \cite{gateva+van} and establish a correspondence between groups of I-type and the structure group $G$ of a non-degenerate, involutive and braided ``set-theoretical'' solution.
They also remark that the defining presentation of a monoid of I-type satisfies the right cube condition, as defined by Dehornoy in \cite[Prop.4.4]{deh_complte}. Using our result, this makes a correspondence between groups of I-type and the class of Garside groups  studied in this paper.
Gateva-Ivanova shows in \cite{gateva} that the monoid corresponding to a square-free,  non-degenerate, involutive and braided ``set-theoretical'' solution has a structure of distributive lattice with respect to left and right divisibility and that the left least common multiple of the generators is equal to their right least common multiple and she calls this element the principal monomial.

\begin{acknowledgment}
This work is a part of   my Ph-d thesis, done  at the Technion under
the supervision of Professor Arye Juhasz.  I am very grateful to
Professor Arye Juhasz, for his patience, his encouragement and his many helpful remarks.
\end{acknowledgment}
\pagebreak
\section{The structure group corresponding to a ``set-theoretical'' solution}

All the definitions and results in this section are from \cite{etingof}.

\subsection{Definitions}
\begin{defn}
Let $X$ be a non-empty set and $S:X^{2}\rightarrow  X^{2}$ be a bijection. We will denote the components of $S$  by $S_{1}$ and $S_{2}$ (i.e $S(x,y)=(S_{1}(x,y),S_{2}(x,y))$); they are binary operations on $X$. For positive integers $i<n $ let the map $S^{ii+1}: X^{n} \rightarrow X^{n}$ be defined by $S^{ii+1}=id_{X^{i-1}} \times S\times id_{X^{n-i-1}} $.\\
(i) A pair $(X,S)$ is called \textbf{nondegenerate} if the maps $X \rightarrow X$ defined by $x \mapsto S_{2}(x,y)$ and $x \mapsto S_{1}(z,x)$ are bijections for any fixed $y,z \in X$.
(ii) A pair $(X,S)$ is called \textbf{braided} if $S$ satisfies the braid relation
$S^{12}S^{23}S^{12}=S^{23}S^{12}S^{23}$.
(iii) A pair $(X,S)$ is called \textbf{involutive} if $S^{2}=id_{X^{2}}$, that is $S^{2}(x,y)=(x,y)$ for all $x,y \in X$.
A braided set which is involutive is called a \textbf{symmetric set}.\\
(iiii) Pairs $(X,S)$ and  $(X',S')$  are said to be \textbf{isomorphic} if there exists a bijection $\phi: X \rightarrow X'$ which maps $S$ to $S'$, that is $S'(\phi(x),\phi(y))=(\phi(S_{1}(x,y)),\phi(S_{2}(x,y)))$.
\end{defn}
Let $\alpha:X \times X \rightarrow X\times X$ be the permutation map, defined by $\alpha(x,y)=(y,x)$. Let $R=\alpha \circ S$. The map $R$ is called the \textbf{$R-$matrix corresponding to $S$}.
\begin{prop}\cite[Prop.1.2]{etingof}
(i) $(X,S)$ is a braided set if and only
 if $R$ satisfies the quantum Yang-Baxter equation $R^{12}R^{13}R^{23}=R^{23}R^{13}R^{12}$, where
$R^{ij}$ means acting on the $i$th and $j$th components.\\
(ii) $(X,S)$ is a symmetric  set if and only if in addition $R$ satisfies the unitary condition $R^{21}R=1$.
\end{prop}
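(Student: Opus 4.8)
The plan is to derive both equivalences from the single relation $R=\alpha\circ S$, used on $X^{2}$ and on $X^{3}$. Since $\alpha^{2}=\mathrm{id}$, that relation also reads $S=\alpha\circ R$, and on $X^{3}$, writing $\alpha^{12},\alpha^{23},\alpha^{13}$ for the transposition operators permuting the indicated tensor factors, it gives $S^{12}=\alpha^{12}R^{12}$ and $S^{23}=\alpha^{23}R^{23}$. Note that $R=\alpha\circ S$ is a bijection, being a composite of bijections, so every operator below is invertible.

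For part (i), I would substitute $S^{ii+1}=\alpha^{ii+1}R^{ii+1}$ into the braid relation $S^{12}S^{23}S^{12}=S^{23}S^{12}S^{23}$ and move every transposition operator to the left of the word. The tools are the conjugation identities $\alpha^{ab}R^{cd}\alpha^{ab}=R^{\pi(c)\pi(d)}$ with $\pi=(a\,b)$ — concretely $R^{12}\alpha^{23}=\alpha^{23}R^{13}$, $\;R^{23}\alpha^{12}=\alpha^{12}R^{13}$, $\;R^{13}\alpha^{12}=\alpha^{12}R^{23}$ and $R^{13}\alpha^{23}=\alpha^{23}R^{12}$ — together with the Coxeter relation $\alpha^{12}\alpha^{23}\alpha^{12}=\alpha^{23}\alpha^{12}\alpha^{23}=\alpha^{13}$. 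Performing three such swaps on each side, the left-hand side of the braid relation becomes $\alpha^{13}R^{23}R^{13}R^{12}$ and the right-hand side becomes $\alpha^{13}R^{12}R^{13}R^{23}$; cancelling the bijection $\alpha^{13}$ leaves exactly $R^{12}R^{13}R^{23}=R^{23}R^{13}R^{12}$. Since each step is an equivalence, $(X,S)$ is braided if and only if $R$ satisfies the quantum Yang--Baxter equation.

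For part (ii), I would note that on $X^{2}$ one has $R^{21}=\alpha\,R\,\alpha$, hence $R^{21}=\alpha(\alpha S)\alpha=S\alpha$ and $R^{21}R=(S\alpha)(\alpha S)=S^{2}$ (using $\alpha^{2}=\mathrm{id}$). Thus the unitary condition $R^{21}R=1$ is equivalent to $S^{2}=\mathrm{id}$, i.e.\ to $(X,S)$ being involutive, with no braiding assumption needed. Combining this with part (i): $R$ satisfies the quantum Yang--Baxter equation together with the unitary condition $R^{21}R=1$ if and only if $(X,S)$ is both braided and involutive, i.e.\ a symmetric set.

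The calculations are entirely routine; the one delicate point is the index bookkeeping in part (i): at each swap one must select the conjugation identity and the orientation of the Coxeter relation matching the side of the braid equation being reduced. I expect this to be the only, and minor, obstacle — once the three swaps are carried out symmetrically on the two sides, the common prefactor $\alpha^{13}$ appears of its own accord and cancels.
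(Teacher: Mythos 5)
Your proposal is correct: the conjugation identities you list are the right bookkeeping facts, the three swaps do produce the common prefactor $\alpha^{13}$ on both sides (I checked: $S^{12}S^{23}S^{12}=\alpha^{12}R^{12}\alpha^{23}R^{23}\alpha^{12}R^{12}=\alpha^{13}R^{23}R^{13}R^{12}$ and symmetrically for the other side), and the observation $R^{21}R=(S\alpha)(\alpha S)=S^{2}$ settles (ii) with no braiding hypothesis needed. Note that the paper itself offers no proof of this statement -- it is quoted verbatim from Etingof--Schedler--Soloviev \cite{etingof} -- and your argument is essentially the standard one given there, translating the braid relation for $S$ into the quantum Yang--Baxter equation for $R=\alpha\circ S$ by conjugating with position permutations; so there is nothing in the paper to compare against beyond the citation, and your write-up would serve as a self-contained proof of the quoted result.
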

\textbf{Example:}
Let $X$ be a set and let  $S:X^{2} \rightarrow X^{2}$  be the  mapping  $S(x,y)=(y,x)$. Then $(X,S)$ is a nondegenerate symmetric set which is called the \textbf{trivial solution}. \\
\textbf{Another example:the permutation solution  (Lyubashenko)}\\
If $S(x,y)=(g(y),f(x))$, where $f,g:X\rightarrow X$ then $(X,S)$ is  nondegenerate iff
$f,g$ are bijective; $(X,S)$ is  braided  iff $fg=gf$; $(X,S)$ is
involutive  iff $g=f^{-1}$. In the last case $(X,S)$ is   called a \textbf{permutation solution}.  Two permutation solutions are isomorphic if and only if  the corresponding permutations are conjugate.

 The notation introduced in \cite{etingof} is as
follows: Let $X$ be  a finite set and let $S$ be defined in the
following way: $S(x,y)=(g_{x}(y),f_{y}(x))$.
 Here, if $X=\{x_{1},...,x_{n}\}$ is a finite set and $y=x_{i}$ for some $1\leq i\leq n$, then
 we write sometimes $f_{i},g_{i}$ instead of $f_{y},g_{y}$  and we use the following notation for $S$: $S(i,j)=(g_{i}(j),f_{j}(i))$.\\
The following claim is Proposition 1.6 from \cite{etingof} with some additions which are implicit from the proof of the proposition, and which is adapted to our needs.
\begin{claim}\label{debut_form}
(i) $S$ is non-degenerate $\Leftrightarrow$ $f_{i}$ and $g_{i}$ are bijective, $1 \leq i \leq n$.\\
(ii) $S$ is involutive $\Leftrightarrow$ $g_{g_{i}(j)}f_{j}(i)=i$  and $f_{f_{j}(i)}g_{i}(j)=j$,
$1 \leq i,j \leq n$.\\
(iii) $S$ is braided  $\Leftrightarrow$ $g_{i}g_{j}=g_{g_{i}(j)}g_{f_{j}(i)}$  and $f_{j}f_{i}=f_{f_{j}(i)}f_{g_{i}(j)}$\\ and $f_{g_{f_{j}(i)}(k)}g_{i}(j)= g_{f_{g_{j}(k)}(i)}f_{k}(j)$, $1 \leq i,j,k \leq n$.
\end{claim}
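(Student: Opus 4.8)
The plan is to prove the three equivalences by substituting the formula $S(i,j) = (g_i(j), f_j(i))$ — i.e.\ $S_1(i,j) = g_i(j)$ and $S_2(i,j) = f_j(i)$ — into each defining condition and reading off the resulting identities coordinatewise. For part (i) this is immediate: the two maps appearing in the non-degeneracy condition are $x \mapsto S_2(x,y) = f_y(x)$ and $x \mapsto S_1(z,x) = g_z(x)$, so demanding that the first be a bijection for every fixed $y$ is exactly demanding that each $f_y$ be a bijection, and likewise that the $g_z$ be bijections.

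For part (ii) I would compute $S^2(i,j)$ directly. Writing $u = g_i(j)$ and $v = f_j(i)$, one gets $S^2(i,j) = S(u,v) = \bigl(g_{g_i(j)}(f_j(i)),\, f_{f_j(i)}(g_i(j))\bigr)$, so $S^2 = \mathrm{id}_{X^2}$ is equivalent to the pair of identities $g_{g_i(j)}f_j(i) = i$ and $f_{f_j(i)}g_i(j) = j$ holding for all $i,j$. (This could also be extracted from the unitary condition $R^{21}R = 1$ of Proposition 1.2, but the direct computation is shorter and yields both identities at once.)

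For part (iii) I would evaluate both sides of $S^{12}S^{23}S^{12} = S^{23}S^{12}S^{23}$ on an arbitrary triple $(i,j,k) \in X^3$ and match the three coordinates. The left side unfolds as $S^{12}(i,j,k) = (g_i(j), f_j(i), k)$, and then one application of $S^{23}$ followed by one of $S^{12}$ produces the triple with first coordinate $g_{g_i(j)}g_{f_j(i)}(k)$, second coordinate $f_{g_{f_j(i)}(k)}g_i(j)$, and third coordinate $f_k f_j(i)$; the right side unfolds symmetrically to the triple with first coordinate $g_i g_j(k)$, second coordinate $g_{f_{g_j(k)}(i)}f_k(j)$, and third coordinate $f_{f_k(j)}f_{g_j(k)}(i)$. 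Equating the first coordinates for all $k$ gives $g_i g_j = g_{g_i(j)}g_{f_j(i)}$; equating the third coordinates for all $i$ and relabelling the indices gives $f_j f_i = f_{f_j(i)}f_{g_i(j)}$; and equating the second coordinates gives the remaining identity $f_{g_{f_j(i)}(k)}g_i(j) = g_{f_{g_j(k)}(i)}f_k(j)$. Since the braid relation holds if and only if all three coordinates agree for every $(i,j,k)$, the equivalence follows in both directions.

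The only delicate point is the bookkeeping in part (iii): at each application of $S$ one must keep straight which index is the subscript and which is the argument, and in what order the compositions nest, since a misnesting produces a plausible-looking but incorrect identity. I would guard against this by checking the final identities on the trivial solution $f_i = g_i = \mathrm{id}$ (all three become tautologies) and on a permutation solution $f_i = f$, $g_i = f^{-1}$ (the first two collapse to $f^{\pm 2} = f^{\pm 2}$ and the third to $j = j$ after cancellation), which catches essentially any transcription slip.
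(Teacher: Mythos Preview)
Your proof is correct. The paper does not actually prove this claim; it is quoted from \cite{etingof} (their Proposition~1.6, with the identities made explicit), so there is no ``paper's own proof'' to compare against. Your direct substitution of $S_1(i,j)=g_i(j)$, $S_2(i,j)=f_j(i)$ into the non-degeneracy, involutivity, and braid conditions and coordinatewise reading-off is exactly the standard computation one finds in \cite{etingof}, and your expansions of both sides of $S^{12}S^{23}S^{12}=S^{23}S^{12}S^{23}$ are accurate.
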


\subsection{The structure group}
\begin{defn}\label{def_rel_struct}
The \textbf{structure group of $(X,S)$}  is defined to be the group, $G$,  generated by the elements of $X$ with defining relations \
$xy=tz$ when $S(x,y)=(t,z)$.\
\end{defn}

\begin{prop}\label{braided_actions}\cite[Prop.2.1]{etingof}
Suppose that $(X,S)$ is non-degenerate. Then $(X,S)$ is a braided set if and only if the following conditions are simultaneously satisfied:\\
(i) the assignment $x \rightarrow f_{x}$ is a right action of $G$ on $X$.\\
(ii) the assignment $x \rightarrow g_{x}$ is a left action of $G$ on $X$.\\
(iii) the linking relation $f_{g_{f_{y}(x)}(z)}(g_{x}(y))=g_{f_{g_{y}(z)}(x)}(f_{z}(y))$ holds.
\end{prop}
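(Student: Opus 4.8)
The plan is to convert each of the conditions (i), (ii), (iii) into an explicit system of functional identities among the bijections $f_{x},g_{x}$ ($x\in X$), and then to check that the simultaneous validity of these systems is exactly the braid condition, by expanding $S^{12}S^{23}S^{12}=S^{23}S^{12}S^{23}$ coordinatewise on a generic triple of $X^{3}$. Since $(X,S)$ is non-degenerate, every $f_{x}$ and every $g_{x}$ belongs to $\operatorname{Sym}(X)$, so the phrases ``right/left action of $G$ on $X$'' make sense. Recall that $G=\operatorname{Gp}\langle X\mid xy=g_{x}(y)f_{y}(x),\ x,y\in X\rangle$ (Definition~\ref{def_rel_struct}); any set map $X\to\operatorname{Sym}(X)$ extends uniquely to an (anti)homomorphism out of the free group on $X$, and this descends to $G$ precisely when it kills every defining relator. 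Applied to $x\mapsto f_{x}$, viewed as a right action (hence an anti-homomorphism), this shows that (i) is equivalent to the family $f_{y}f_{x}=f_{f_{y}(x)}f_{g_{x}(y)}$ for all $x,y$; applied to $x\mapsto g_{x}$, viewed as a left action (a homomorphism), it shows that (ii) is equivalent to $g_{x}g_{y}=g_{g_{x}(y)}g_{f_{y}(x)}$ for all $x,y$; and (iii) is already the family $f_{g_{f_{y}(x)}(z)}(g_{x}(y))=g_{f_{g_{y}(z)}(x)}(f_{z}(y))$ for all $x,y,z$.

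Next I would compute the two iterated maps. Writing $S(u,v)=(g_{u}(v),f_{v}(u))$ and substituting in succession, one gets
\[
S^{12}S^{23}S^{12}(x,y,z)=\bigl(g_{g_{x}(y)}g_{f_{y}(x)}(z),\ f_{g_{f_{y}(x)}(z)}(g_{x}(y)),\ f_{z}f_{y}(x)\bigr),
\]
\[
S^{23}S^{12}S^{23}(x,y,z)=\bigl(g_{x}g_{y}(z),\ g_{f_{g_{y}(z)}(x)}(f_{z}(y)),\ f_{f_{z}(y)}f_{g_{y}(z)}(x)\bigr).
\]
Two maps $X^{3}\to X^{3}$ agree if and only if they agree in each coordinate for every $(x,y,z)$; hence $(X,S)$ is braided if and only if the three associated families of equations hold simultaneously. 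Comparing the first coordinates reproduces the identity $g_{x}g_{y}=g_{g_{x}(y)}g_{f_{y}(x)}$, i.e. condition (ii); comparing the third coordinates reproduces, after renaming the variables, the identity $f_{y}f_{x}=f_{f_{y}(x)}f_{g_{x}(y)}$, i.e. condition (i); and comparing the second coordinates reproduces verbatim the identity in (iii). Chaining these equivalences yields the proposition. (Alternatively, these three coordinate identities are precisely the three families listed in Claim~\ref{debut_form}(iii), so the statement can also be deduced directly from that claim.)

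The only real work is the bookkeeping in the two expansions above: the compositions $f\circ f$ and $g\circ g$ and the nested mixed subscripts must be tracked with care. The one genuine subtlety is the order of composition, which is what forces $x\mapsto f_{x}$ to be a \emph{right} action and $x\mapsto g_{x}$ a \emph{left} action (rather than the reverse), and which determines that the first and third coordinate comparisons match up with (ii) and (i) respectively; getting this matching right is the main point to be careful about.
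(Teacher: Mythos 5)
Your proposal is correct, and its computations check out: both coordinatewise expansions of $S^{12}S^{23}S^{12}$ and $S^{23}S^{12}S^{23}$ are right, and the identification of (i) with the family $f_{y}f_{x}=f_{f_{y}(x)}f_{g_{x}(y)}$ and of (ii) with $g_{x}g_{y}=g_{g_{x}(y)}g_{f_{y}(x)}$ (via the universal property of the presentation of $G$, using non-degeneracy so that the $f_{x},g_{x}$ lie in $\operatorname{Sym}(X)$ and the free-group extension exists) is exactly the right use of the defining relations $xy=g_{x}(y)f_{y}(x)$. Note that the paper itself gives no proof of this proposition --- it is quoted from \cite[Prop.2.1]{etingof} --- and your argument is essentially the standard proof given there; within this paper the quickest route is the one you mention parenthetically, since Claim \ref{debut_form}(iii) already records that braidedness is equivalent to the three families of identities, so all that remains is your observation that the first two families say precisely that the relators act trivially under $x\mapsto g_{x}$ (homomorphism, left action) and $x\mapsto f_{x}$ (anti-homomorphism, right action), while the third is condition (iii) verbatim.
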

\begin{prop}\label{Prop_def_T}\cite[Prop.2.2]{etingof}
Suppose that $(X,S)$ is non-degenerate, involutive and braided.
Define the map $T:X \rightarrow X$ by the formula $T(y)=f^{-1}_{y}(y)$.
Then\\
  (i) $Tg_{y}=f^{-1}_{y}T$\\
  (ii) $T$ is invertible and $T^{-1}(y)=g^{-1}_{y}(y)$.
\end{prop}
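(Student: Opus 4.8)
The plan is to prove the two identities directly from the characterization of non-degenerate, involutive, braided solutions recorded in Claim~\ref{debut_form}, together with the group-action statements of Proposition~\ref{braided_actions}. Recall that $T(y)=f_{y}^{-1}(y)$, which is well-defined since each $f_{y}$ is a bijection by non-degeneracy.

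For part~(i), I would show $Tg_{y}=f_{y}^{-1}T$ by evaluating both sides at an arbitrary $z\in X$ and reducing everything to the braid identities $g_{i}g_{j}=g_{g_{i}(j)}g_{f_{j}(i)}$ and $f_{j}f_{i}=f_{f_{j}(i)}f_{g_{i}(j)}$ and the involutivity identities $g_{g_{i}(j)}f_{j}(i)=i$, $f_{f_{j}(i)}g_{i}(j)=j$ from Claim~\ref{debut_form}. Concretely, set $w=g_{y}(z)$; the left-hand side gives $T(w)=f_{w}^{-1}(w)$, while the right-hand side gives $f_{y}^{-1}(T(z))=f_{y}^{-1}(f_{z}^{-1}(z))$. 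So I must check $f_{w}^{-1}(w)=f_{y}^{-1}f_{z}^{-1}(z)$, equivalently $f_{y}f_{w}^{-1}(w)=f_{z}^{-1}(z)$, equivalently $f_{z}f_{y}(f_{w}^{-1}(w))=z$. Here I would use the right-action property from Proposition~\ref{braided_actions}(i): since $x\mapsto f_{x}$ is a right action of $G$ on $X$ and $zy=g_{?}(\cdot)\,f_{?}(\cdot)$-type relations hold in $G$, the composite $f_{z}f_{y}$ can be rewritten. The cleanest route is: from $S(z,y)=(g_{z}(y),f_{y}(z))$ and the right-action property, $f_{z}f_{y}=f_{g_{z}(y)}f_{f_{y}(z)}$ (this is precisely the second braid identity of Claim~\ref{debut_form}(iii), read with $i=y$, $j=z$ after renaming). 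Noting $w=g_{y}(z)$ is the ``other'' mixed term, one traces through the involutive identities to collapse $f_{z}f_{y}f_{w}^{-1}$ to the identity on the relevant element, yielding $z$. Part~(ii) is then almost formal: once (i) holds, $T$ intertwines the left action $g$ with the right action $f^{-1}$; invertibility of $T$ and the formula $T^{-1}(y)=g_{y}^{-1}(y)$ follow by the symmetric computation with the roles of $f$ and $g$ exchanged, which is available because involutivity makes the defining data symmetric under $S\leftrightarrow S^{-1}=S$ (swapping $f$ and $g$ appropriately, as in Claim~\ref{debut_form}(ii)). Explicitly I would verify $T(g_{y}^{-1}(y))=y$ and $g_{T^{-1}(y)}(\,\cdot\,)$-type consistency by the same substitution trick.

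The main obstacle will be bookkeeping: the identities in Claim~\ref{debut_form}(iii) involve nested subscripts like $g_{g_{i}(j)}g_{f_{j}(i)}$, and correctly matching indices when one specializes $i,j$ (and sometimes $k$) to the particular elements $y$, $z$, $g_{y}(z)$, $f_{z}(y)$ is error-prone. I expect the key simplification to come from introducing the shorthand notation $S(x,y)=(x{\cdot}y,\; x{}_{\cdot}y)$ or similar, so that the involutive relations become genuinely symmetric and the braid relations become transparently the associativity-type conditions; then part~(i) amounts to a one-line manipulation. The subtle point to get right is the direction of the actions (left action $g$ versus right action $f$) and the consequent order of composition, since an off-by-one in which index sits outermost will make the identities fail to close up. Once the indices are pinned down, no genuinely hard step remains — everything reduces to the three families of identities in Claim~\ref{debut_form} plus the (routine) fact that bijectivity lets us pass freely between $f$, $g$ and their inverses.
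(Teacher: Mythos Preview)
The paper does not actually prove this proposition: it is quoted verbatim from \cite{etingof} (as Prop.~2.2 there), and Section~2 explicitly states that all definitions and results in that section are taken from \cite{etingof}. So there is no in-paper proof to compare against.

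That said, your plan is essentially correct and would close up into a complete argument. Two remarks to tighten it:

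For part~(i), the braid identity you need is $f_{z}f_{y}=f_{f_{z}(y)}f_{g_{y}(z)}$ (this is Claim~\ref{debut_form}(iii) with $i=y$, $j=z$), not $f_{z}f_{y}=f_{g_{z}(y)}f_{f_{y}(z)}$ as you wrote; once the indices are set correctly, your computation goes through in one line: with $w=g_{y}(z)$ one gets $f_{z}f_{y}f_{w}^{-1}=f_{f_{z}(y)}$, and then the involutivity identity $f_{f_{z}(y)}(g_{y}(z))=z$ from Claim~\ref{debut_form}(ii) gives exactly $f_{z}f_{y}(f_{w}^{-1}(w))=z$, which is what you needed.

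For part~(ii), you do not need part~(i) or the braid relation at all: invertibility with inverse $U(y)=g_{y}^{-1}(y)$ follows directly from the two involutivity identities in Claim~\ref{debut_form}(ii). For $U\circ T=\mathrm{id}$, set $v=T(y)=f_{y}^{-1}(y)$; specializing $f_{f_{j}(i)}g_{i}(j)=j$ to $i=v$, $j=y$ gives $f_{y}(g_{v}(y))=y$, hence $g_{v}(y)=v$, i.e.\ $U(v)=y$. The identity $T\circ U=\mathrm{id}$ is symmetric, using $g_{g_{i}(j)}f_{j}(i)=i$. This is cleaner than invoking a ``dual'' version of~(i), and avoids any reliance on $X$ being finite.
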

\begin{defn}\label{def_decomposale}\cite[Defn.2.5]{etingof}
(a) A subset $Y$ of a non-degenerate and symmetric set $X$ is said to be an \textbf{invariant} subset if $S(Y \times Y)\subseteq Y \times Y$.\\
(b) An invariant subset $Y$  is said to be  \textbf{non-degenerate} if $(Y,S\mid_{Y\times Y})$ is a non-degenerate and symmetric set.\\
(c) A non-degenerate and symmetric set $(X,S)$ is said to be \textbf{decomposable} if it is a union of two nonempty disjoint non-degenerate invariant subsets. Otherwise,  $(X,S)$ is said to be \textbf{indecomposable}.
\end{defn}
\begin{rem}\cite{etingof}
If $X$ is finite, then any invariant subset $Y$ of $X$ is non-degenerate.
\end{rem}
\begin{prop}\cite[Prop.2.11]{etingof}\label{pro_etin_Gtransitiv}
A non-degenerate and symmetric set $(X,S)$ is indecomposable if and only if $G$ acts transitively on $X$.
\end{prop}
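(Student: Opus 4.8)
The plan is to make ``$G$ acts transitively on $X$'' concrete via Proposition~\ref{braided_actions}: since $(X,S)$ is non-degenerate and braided, $x\mapsto g_{x}$ extends to a homomorphism $\lambda\colon G\to\operatorname{Sym}(X)$ — this is exactly what the relation $g_{a}g_{b}=g_{g_{a}(b)}g_{f_{b}(a)}$ of Claim~\ref{debut_form}(iii) guarantees, so the assignment descends from the free group on $X$ to $G$ — and the assertion is that the subgroup $\lambda(G)=\langle g_{x}\mid x\in X\rangle\le\operatorname{Sym}(X)$ is transitive. (By Proposition~\ref{Prop_def_T}, $f_{y}^{-1}=Tg_{y}T^{-1}$, so the subgroup built from the $f_{x}$ is $T\lambda(G)T^{-1}$; transitivity being a conjugacy invariant, it is immaterial whether one uses the $g$-action or the $f$-action.) The whole argument rests on one elementary remark: as $X$ is finite and each $g_{x}$ is a bijection, any $W\subseteq X$ with $g_{x}(W)\subseteq W$ actually satisfies $g_{x}(W)=W$, and hence $g_{x}(X\setminus W)=X\setminus W$ too; consequently a subset of $X$ is $\lambda(G)$-invariant iff it is preserved by every $g_{x}$, and the complement of a $\lambda(G)$-invariant set is again $\lambda(G)$-invariant.

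\emph{Indecomposable $\Rightarrow$ transitive} (contrapositive). Assume $\lambda(G)$ is not transitive and let $Y$ be a $\lambda(G)$-orbit, $Z:=X\setminus Y$; then $Y,Z$ are nonempty, disjoint and $\lambda(G)$-invariant. I claim $Y$ and $Z$ are invariant subsets of $(X,S)$ in the sense of Definition~\ref{def_decomposale}. For $a,y\in Y$ write $(a',y')=S(a,y)$, so $a'=g_{a}(y)$ and $y'=f_{y}(a)$; then $a'=\lambda(a)(y)\in Y$ because $Y$ is $\lambda(G)$-invariant. For $y'$ I use involutivity: $S(a',y')=S^{2}(a,y)=(a,y)$, hence $g_{a'}(y')=a$; since $g_{a'}$ preserves $Y$ so does $g_{a'}^{-1}$, and $a\in Y$ forces $y'=g_{a'}^{-1}(a)\in Y$. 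Thus $S(Y\times Y)\subseteq Y\times Y$, and the identical computation gives $S(Z\times Z)\subseteq Z\times Z$. As $X$ is finite, $Y$ and $Z$ are automatically non-degenerate (the Remark following Definition~\ref{def_decomposale}), so $X=Y\sqcup Z$ exhibits $(X,S)$ as decomposable.

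\emph{Transitive $\Rightarrow$ indecomposable} (contrapositive). Assume $X=Y\sqcup Z$ with $Y,Z$ nonempty, disjoint and invariant. If $a\in Y$, then for every $y\in Y$ we have $g_{a}(y)=S_{1}(a,y)\in Y$ since $(a,y)\in Y\times Y$, so $g_{a}(Y)\subseteq Y$, whence $g_{a}(Y)=Y$ and $g_{a}$ preserves both $Y$ and $Z$. If instead $a\in Z$, the same computation on $Z$ gives $g_{a}(Z)=Z$, hence $g_{a}(Y)=X\setminus g_{a}(Z)=Y$. Either way every $g_{a}$ preserves $Y$, so $\lambda(G)$ preserves the nonempty proper subset $Y$ and is not transitive. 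The single delicate point — and the only place involutivity is used — is the ``$y'\in Y$'' step in the previous paragraph; everything else is the bookkeeping of the two invariant pieces together with the finiteness/bijectivity trick. The main thing to get right, conceptually, is that transitivity must be read off the $g$-action of \emph{the group $G$} (not merely of the monoid or the free group), which is why Proposition~\ref{braided_actions} is invoked at the outset.
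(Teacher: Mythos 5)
Your proof is correct and takes essentially the approach the paper relies on: the paper itself only cites this proposition from \cite{etingof}, but its Lemma \ref{transitive_M} runs the same orbit argument for the monoid action, and your two contrapositive directions (orbits of the $g$-action yield a decomposition; a decomposition forces every $g_{a}$ to preserve a nonempty proper subset) match that argument. The only small deviation is that you get the second component $f_{y}(a)\in Y$ from involutivity ($y'=g_{a'}^{-1}(a)$), whereas the paper transfers $g$-invariance to $f$-invariance via $T$ and $f_{x}=Tg_{x}^{-1}T^{-1}$ from Proposition \ref{Prop_def_T}; both are valid, and both, like the Remark you invoke for non-degeneracy of the invariant pieces, use the standing finiteness of $X$.
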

\begin{prop}\cite[Prop.2.15]{etingof}\label{prop_eting_YZ}
If $(X,S)$ is a union of non-degenerate invariant subsets $Y$ and $Z$, then the map $S$ defines bijections $Y \times Z \rightarrow Z \times Y$ and $Z \times Y \rightarrow Y \times Z$.
\end{prop}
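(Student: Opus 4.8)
The plan is to exploit the explicit form of $S$. Write $S(x,y)=(g_{x}(y),f_{y}(x))$, so that by Claim~\ref{debut_form}(i) non-degeneracy of $(X,S)$ means precisely that every $f_{y}$ and every $g_{x}$ is a bijection of $X$. I read the hypothesis, in the spirit of Definition~\ref{def_decomposale}, as saying that $X$ is the \emph{disjoint} union $Y\sqcup Z$; the content to be proved is then the ``off-diagonal'' inclusions $S(Y\times Z)\subseteq Z\times Y$ and $S(Z\times Y)\subseteq Y\times Z$, since once these are known the bijectivity of the two restrictions will be automatic from $S^{2}=\mathrm{id}_{X^{2}}$.

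First I would extract the consequences of the two invariant subsets being non-degenerate. Invariance $S(Y\times Y)\subseteq Y\times Y$ gives in particular $g_{x}(Y)\subseteq Y$ for every $x\in Y$ (look at the first coordinate of $S(x,u)$ for $u\in Y$). Non-degeneracy of $(Y,S|_{Y\times Y})$ says that for each fixed $x\in Y$ the map $u\mapsto S_{1}(x,u)=g_{x}(u)$ is a bijection of $Y$, so in fact $g_{x}(Y)=Y$; and since $g_{x}$ is a bijection of $X=Y\sqcup Z$ already mapping $Y$ onto $Y$, it must map $Z$ onto $Z$ as well. Hence $g_{x}(y)\in Z$ whenever $x\in Y$ and $y\in Z$. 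The mirror-image argument applied to $Z$ and the second coordinate of $S$ shows that for each fixed $y\in Z$ the map $u\mapsto S_{2}(u,y)=f_{y}(u)$ is a bijection of $Z$, whence $f_{y}(Z)=Z$ and therefore $f_{y}(Y)=Y$; so $f_{y}(x)\in Y$ whenever $x\in Y$ and $y\in Z$.

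Combining the two observations, $S(x,y)=(g_{x}(y),f_{y}(x))\in Z\times Y$ for every $(x,y)\in Y\times Z$, i.e.\ $S$ restricts to a map $Y\times Z\to Z\times Y$; exchanging the roles of $Y$ and $Z$ gives a restriction $Z\times Y\to Y\times Z$. Finally, involutivity gives $S\circ S=\mathrm{id}_{X^{2}}$, so these two restrictions compose, in both orders, to the identity; hence each is a bijection and the two are mutually inverse.

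I do not expect a genuine obstacle here: once $S$ is written as $(g_{x}(y),f_{y}(x))$ the argument is bookkeeping, the one real input being that non-degeneracy upgrades ``$g_{x}$ sends $Y$ into $Y$'' to ``$g_{x}$ sends $Y$ onto $Y$'', which is exactly what forces the complementary set $Z$ to be preserved and hence the cross blocks to be swapped. The only point deserving an explicit word is the reading of ``union'' as disjoint union: without disjointness (say $Y\cap Z\neq\emptyset$) one can arrange symmetric sets with $S(Y\times Z)\not\subseteq Z\times Y$, so the disjointness built into Definition~\ref{def_decomposale} is genuinely being used.
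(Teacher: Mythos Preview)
The paper does not supply a proof of this proposition; it is quoted verbatim from \cite{etingof} and used as a black box in Section~7. There is therefore nothing in the paper to compare your argument against. Your proof is correct: the key step---that non-degeneracy of $(Y,S|_{Y\times Y})$ upgrades $g_{x}(Y)\subseteq Y$ to $g_{x}(Y)=Y$, whence the global bijection $g_{x}$ must send the complement $Z$ onto $Z$---is exactly the right mechanism, and involutivity then makes the two cross restrictions mutually inverse. Your remark that disjointness of $Y$ and $Z$ is genuinely needed is apt and worth keeping.
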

\pagebreak
\section{ Garside monoids and groups}
All the definitions and results in the first two  subsections are taken from
\cite{deh_francais}, \cite{deh_livre} and in the third subsection also from \cite{picantin}.
\subsection{Definitions}
\begin{defn}
Let $M$ be a monoid and let $x,y$ be elements in $M$. Call $x$
\textbf{a left divisor} of $z$ if there is an element $t$ in $M$
such that $z=xt$. Call $x$ is \textbf{a proper} left divisor of $z$ if in addition
 $t \neq 1$.
\end{defn}
\begin{defn}
Let $M$ be a monoid and let $x,y$ be elements in $M$. Say that $z$
is
\textbf{a right least common multiple  (right lcm)} of $x$ and $y$ if:\\
  $1.$ $x$ and $y$ are left divisors of $z$ \\
  $2.$ if there is an element $w$ in $M$ such that $x$ and $y$ are left divisors of $w$, then
   $z$ is  left divisor of $w$.
   The notation is $z=x\vee y$.\\
  \textbf{The complement at right of
   $y$ on $x$} is defined to be an element $c\in M$ such that $x\vee y= xc$. Notation:
   $c=x \setminus y$. By definition, $x\vee y=x(x \setminus y)$.
\end{defn}

\begin{defn}
Let $M$ be a monoid and let $z,w$ be elements in $M$. Say that $u$
is
\textbf{a left greatest common divisor  (left gcd)} of $x$ and $y$ if:\\
  $1.$  $x$ is a  left divisor of $z$ and $w$. \\
  $2.$ if there is an element $v$ in $M$ such that $v$ is a left divisor of $z$ and $w$, then
   $v$ is  a left divisor of $x$.
   The notation is $x=z \wedge w$.
\end{defn}
The left lcm and the right gcd of two elements are defined in a
symmetric way.
 If the element $x$ of $M$ is  the equivalence class of the word $w$, we say that \textbf{$w$ represents $x$}.
\begin{defn}
Let $M$ be a monoid and let $x$ be an element in $M$. \\
(i) Call  $x$ \textbf{an atom}  if $x$ is not equal to 1 and  $x=yz$ implies
that $y=1$ or $z=1$. \\
(ii)  The \textbf{norm} $\parallel x \parallel$ of
$x$ is defined to be the supremum of the lengths of the
decompositions of $x$ as a product of atoms.\\
(iii) The monoid  $M$ is
\textbf{atomic} if $M$ is generated by its atoms and for every $x$
in $M$ the norm of $x$ is finite.
\end{defn}
\textbf{Example:}\cite{deh_francais} Let $M$ be the monoid presented by  $ M = \langle a,b
\mid aba=baab\rangle  $.  The word $abaa$ has infinite norm since it holds  that
$abaa=baaba=babaab=...$, so $M$ is not atomic.\\
\textbf{Basic Facts}\\
(a) If all the relations in $M$
are length preserving, then each element $x$ of $M$ has a finite norm, since all the words which represent $x$ have the same length. So,  $M$ is atomic.\\
(b) If $M$ is atomic then it holds  that $\parallel x \parallel \geq 1$ for
$x \neq 1 $ and  $\parallel xy\parallel \geq\parallel x
\parallel + \parallel y \parallel$ for every $x,y$ in $M$.\\
(c) If $M$ is atomic then $1$ is the unique invertible
element (from (b)).
\begin{defn}
Let $M$ be a monoid.\\
(a) $M$ is said to be \textbf{Gaussian} if $M$ is
atomic, left and right cancellative, and if any two elements in
$M$ have a left and right gcd and a left and right lcm.\\
(b) Let $\Delta \in M$. Then $\Delta$ is a \textbf{Garside element} if the left divisors of
$\Delta$ are the same as the right divisors, there is a finite
number of them and they generate $M$.\\
(c) A Gaussian monoid $M$ is said to be   \textbf{Garside} if $M$ contains a Garside element.\\
(d) A group $G$ is said to be a \textbf{Gaussian group} (respectively
a \textbf{Garside group}) if there exists a Gaussian monoid $M$
(respectively a Garside monoid) such that $G$ is the fraction
group of $M$. Clearly, if $G$ is Garside, then $G$ is Gaussian.
\end{defn}
A Gaussian monoid satisfies both left and right Ore's conditions, so it embeds in its group of fractions (see \cite{Clifford}).\\
\textbf{Example} Braid groups \cite{garside} and Arting groups of finite type are Garside
groups.
\begin{defn}\cite[Defn.1.6]{deh_francais}\label{def_conditions}
Let $M$ be a monoid. $M$ satisfies the condition:\\
 $-(C_{0})$ if $1$ is the unique invertible element in $M$.\\
 $-(C_{1})$ if $M$ is left cancellative.\\
$-(\tilde{C_{1}})$ if $M$ is right cancellative.\\
$-(C_{2})$ if any two elements in $M$ with a right common multiple
admit
a right lcm.\\
$-(C_{3})$ if  $M$ has a finite generating set $P$ closed under
$\setminus$, i.e if $x,y \in P$ then $x \setminus y \in P$.
\end{defn}
\begin{thm}\cite{deh_francais}[Prop.2.1] \label{gars_critere}
A monoid $M$ is a Garside monoid if and only if $M$ satisfies the
conditions $(C_{0})$ , $(C_{1})$ , $(\tilde{C_{1}})$ , $(C_{2})$
and  $(C_{3})$.
\end{thm}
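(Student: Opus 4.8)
The plan is to prove the two implications of the equivalence separately, with essentially all of the work in the ``if'' direction; I would organise that direction around Dehornoy's subword reversing.

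For the ``only if'' direction I would start from a Garside monoid $M$, so $M$ is Gaussian and carries a Garside element $\Delta$. Gaussianity gives $(C_{1})$ and $(\tilde C_{1})$ at once (two-sided cancellativity), and it gives $(C_{2})$ for free, since in a Gaussian monoid \emph{every} pair of elements has a right lcm, hence in particular every pair with a common right multiple does; atomicity, which is built into Gaussianity, yields $(C_{0})$ by Basic Fact~(c). The only real point is $(C_{3})$. I would take $P$ to be the set of divisors of $\Delta$, which is finite and generates $M$ by the definition of a Garside element, and check closure under $\setminus$ as follows: given $a,b\in P$, the right lcm $a\vee b$ exists by Gaussianity and is a left divisor of $\Delta$ directly from the defining property of $\vee$ (as $\Delta$ is a common left multiple of $a$ and $b$); writing $\Delta=(a\vee b)c=a\,(a\setminus b)\,c$, the element $(a\setminus b)c$ is a right divisor of $\Delta$, hence --- and this is where the coincidence of the left and right divisors of $\Delta$ is used --- a left divisor of $\Delta$, so that $a\setminus b$ is itself a left divisor of $\Delta$, i.e. $a\setminus b\in P$.

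For the ``if'' direction I would assume $(C_{0})$--$(C_{3})$ and let $P$ be the finite generating set of $(C_{3})$; note that $(C_{3})$ in particular makes $\setminus$ a total map $P\times P\to P$, so any two generators already have a right lcm. Then I would proceed in four steps. First, set up right subword reversing relative to $P$: the rewriting rule replacing $a^{-1}b$ by $(a\setminus b)(b\setminus a)^{-1}$ is well defined and never leaves $P$ by $(C_{3})$, it terminates by an atomicity (Noetherian) weight argument, and $(C_{2})$ makes it coherent, so that the reversal of a word $u^{-1}v$ returns a positive word exactly when the elements represented by $u$ and $v$ have a common right multiple, in which case it computes their right lcm. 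Second, deduce by induction on $k$ that $p_{1}\vee\dots\vee p_{k}$ exists for generators $p_{i}$, the inductive step being a single reversal of $(p_{1}\cdots p_{k})^{-1}p_{k+1}$, so that $\Delta:=\bigvee P$ exists. Third, show that $\Delta$ is a Garside element: the set of left divisors of $\Delta$ is finite (each has norm at most that of $\Delta$, and an atomic finitely generated monoid has only finitely many elements of bounded norm), it contains $P$ and hence generates $M$, and a left-handed version of the construction shows that the right divisors of $\Delta$ form the same finite set. Fourth, show $M$ is Gaussian: cancellativity is $(C_{1})+(\tilde C_{1})$; an induction on the norm shows every element left-divides some power $\Delta^{k}$, so any two elements have a common right multiple and hence, by $(C_{2})$, a right lcm; left lcm's follow from the left-handed mirror of the whole argument; and left and right gcd's are then recovered as lcm's of the finitely many common divisors. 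Together these steps exhibit $M$ as a Gaussian monoid containing the Garside element $\Delta$, i.e. as a Garside monoid.

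The hard part is the finiteness analysis underlying the first three steps: that reversing terminates, that the iterated right lcm defining $\Delta$ consequently exists and that $\Delta$ has only finitely many divisors, and that every element left-divides a power of $\Delta$. These are exactly the properties that collapse for badly behaved monoids such as $\langle a,b\mid aba=baab\rangle$ (whose element $abaa$ has infinite norm), and they go through here because an atomic (Noetherian) hypothesis --- either a standing assumption in \cite{deh_francais} or one to be derived from $(C_{0})$ together with $(C_{3})$ --- supplies the well-founded induction on the norm that every one of these statements ultimately rests on.
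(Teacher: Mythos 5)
You are reviewing this against a statement the paper itself never proves: Theorem \ref{gars_critere} is imported verbatim from \cite{deh_francais} (Prop.\ 2.1) and used as a black box, so there is no internal proof to compare with and your attempt has to stand on its own. Your ``only if'' direction does stand: taking $P$ to be the set of divisors of $\Delta$, and using that the left and right divisors of $\Delta$ coincide to push $(a\setminus b)c$ (where $\Delta=(a\vee b)c$) back into the left divisors, is a correct and complete verification of $(C_{0})$, $(C_{1})$, $(\tilde{C_{1}})$, $(C_{2})$, $(C_{3})$.

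The ``if'' direction, however, has two genuine gaps, and they sit exactly where the content of Dehornoy's proposition lies. First, atomicity: as the statement is formulated here (Definition \ref{def_conditions}), none of the five conditions asserts atomicity or Noetherianity, yet your argument leans on it throughout --- the ``induction on the norm'' showing every element divides a power of $\Delta$, the finiteness of the set of divisors of $\Delta$, and the recovery of gcd's as lcm's of ``the finitely many common divisors''. Your last sentence explicitly leaves open whether atomicity is a standing assumption or must be derived; it must be derived, and you do not derive it, so the well-founded inductions you invoke have no foundation as written. (By contrast, termination of letter reversing needs no Noetherian input: $(C_{3})$ keeps every complement inside $P$, so reversing $u^{-1}v$ just fills an $|u|\times|v|$ grid; the problem is the norm arguments, not the reversing.) Second, the left-handed structure: Gaussianity requires left lcm's and gcd's, and the Garside property requires that the left and right divisors of $\Delta$ coincide, but the hypotheses are one-sided --- only the right-handed conditions $(C_{2})$ and $(C_{3})$ are available, supplemented by right cancellativity $(\tilde{C_{1}})$. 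So there is no ``left-handed mirror of the whole argument'' to appeal to: a finite generating set closed under the left complement is not part of the data, and the natural candidate (the divisors of $\Delta$) is only known to be closed once one already knows $\Delta$ is a Garside element, which is what is being proved. Extracting the left-sided lattice properties and the equality of the two divisor sets of $\Delta$ from the right-handed conditions together with $(\tilde{C_{1}})$ is the crux of Dehornoy's proof, and your sketch asserts it rather than proves it. Until these two points are filled in, the ``if'' direction is a plausible outline of the right strategy but not a proof.
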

\subsection{Recognizing Garside monoids}

\begin{defn}
\cite[Defn.4.2]{deh_francais}\cite[p.49]{deh_livre} Let $X$ be an alphabet and let
denote by $\epsilon$ the empty word in $X^{*}$. Let $f$ be  a partial function of $X\times X$ into $X^{*}$. We say that $f$ is
a \textbf{complement} on $X$ if  $f(x,x)=\epsilon$ holds for every
$x$ in $X$, and $f(y,x)$ exists whenever $f(x,y)$ does. \\
We write  ``$\equiv^{+}$''
for the congruence on  $X^{*}$ generated by the pairs\\
$(xf(x,y),yf(y,x))$ with $(x,y)$ in the domain of $f$, and we write
``$\equiv$'' for the congruence on $(X\cup X^{-1})^{*}$ generated by
$\equiv^{+}$ together with all  pairs $(xx^{-1},\epsilon)$ and
$(x^{-1}x,\epsilon)$ with $x\in X$. We define the monoid and the
group  \textbf{associated with $f$} to be $X^{*}/ \equiv^{+}$ and
$(X\cup X^{-1})^{*}/ \equiv$ respectively, or in other words the monoid $\operatorname{Mon}\langle    X\mid xf(x,y)=yf(y,x)\rangle$ and the group $\operatorname{Gp}\langle    X\mid xf(x,y)=yf(y,x)\rangle$ respectively
(with $(x,y)$ in the domain of $f$).
\end{defn}
\textbf{Example $1$:} Let$X=\{x_{1},x_{2},x_{3},x_{4},x_{5}\}$ and
$S(i,j)=(g_{i}(j),f_{j}(i))$.\\
We use the following notation:
$\sigma=(1,2,3,4)(5)$ means that $\sigma(1)=2$,
$\sigma(2)=3$, $\sigma(3)=4$, $\sigma(4)=1$ and $\sigma(5)=5$.\\
Assume $\begin{array}{c}
f_{1}=g_{1}=(1,2,3,4)(5)\\
f_{2}=g_{2}=(1,4,3,2)(5)\\
f_{3}=g_{3}=(1,2,3,4)(5)\\
f_{4}=g_{4}=(1,4,3,2)(5)\\
f_{5}=g_{5}=(1)(2)(3)(4)(5)\\
\end{array}$\\
Then a case by case checking  shows that  $(X,S)$ is a non-degenerate, involutive and braided solution.\\It follows from Definition \ref {def_rel_struct}, that the
following $10$  relations define the corresponding structure
group: $\begin{array}{ccc}
x^{2}_{1}=x^{2}_{2}   && x^{2}_{3}=x^{2}_{4}\\
x_{1}x_{2}=x_{3}x_{4} && x_{1}x_{5}=x_{5}x_{1}\\
x_{1}x_{3}=x_{4}x_{2} && x_{2}x_{5}=x_{5}x_{2}\\
x_{2}x_{4}=x_{3}x_{1} && x_{3}x_{5}=x_{5}x_{3}\\
x_{2}x_{1}=x_{4}x_{3} && x_{4}x_{5}=x_{5}x_{4}\\
\end{array}$\\
The relations which are omitted are the trivial ones, e.g.  $x_{1}x_{4}=x_{1}x_{4}$ and so on.
We consider the monoid $M$ with the same presentation. The
complement $f$ is defined totally on $X \times X$.
As an example,   $f(x_{1},x_{2})=x_{1}$  and
$f(x_{2},x_{1})=x_{2}$ are obtained from the relation
$x^{2}_{1}=x^{2}_{2}$,  since it holds that $f(x_{1},x_{2})=x_{1}\setminus
x_{2}$ and so on.  Note that the monoid associated to $f$, $X^{*}/
\equiv^{+}$, is  the monoid $M$.\\
The complement mapping considered sofar is defined on letters
only. Its extension on words is called \textbf{word reversing}  and
is defined in the following way \cite{deh_livre}:\\
If $f$ is a complement on $X$ , then by definition it holds that\
$xf(x,y)\equiv^{+}yf(y,x)$ for all $x,y \in X$ and this implies
that \ $x^{-1}y\equiv f(x,y)f(y,x)^{-1}$. Thus, if we replace in a
word a pattern(subword)  of the form  $x^{-1}y$ with the
corresponding pattern  $f(x,y)f(y,x)^{-1}$, we obtain an
equivalent word. \textbf{Reversing} a word will consist in
iterating this transformation so as to eventually obtain a word where all positive letters occur before all negative letters.
\begin{defn}\cite[Defn.1.13]{deh_livre}
Let $f$ be as above and let $u,v$ be words in $X^{*}$. Then the word $u\setminus v$ is
defined to be the word $v_{1}$ such that $u^{-1}v$ reverses
to $v_{1}u_{1}^{-1}$ for some word $u_{1}$, if such words exist. If $u_{1}$, $v_{1}$  exist, then  they are unique from the definition of $f$.
\end{defn}
\begin{defn} \cite[Defn.5.3]{deh_francais}\cite[p.59]{deh_livre}
Let $f$ be a complement on $X$. For $u,v,w \in X^{*}$ we say that
$f$ is \textbf{coherent} at $(u,v,w)$ if either \\ $( (u
\setminus v ) \setminus (u \setminus w))
\setminus((v \setminus u) \setminus(v \setminus
w))\equiv^{+} \epsilon$ \ holds, or neither of the words $( (u
\setminus v )\setminus (u\setminus w)) ,    ((v
\setminus u)\setminus(v \setminus w))$ exists. For $Y
\subseteq X^{*}$, we say that \textbf{$f$ is coherent on $Y$} if it is
coherent at every triple $(u,v,w)$  with $u,v,w \in Y$. We say
that \textbf{$f$ is coherent} if it is coherent on all of $X^{*}$.
 \end{defn}
\begin{prop} \cite[Prop.6.1]{deh_francais}\label{atomic_coh}
Let $M$ be a monoid associated with a complement $f$ and assume
that $M$ is atomic. Then $f$ is coherent if and only if $f$ is
coherent on $X$.
\end{prop}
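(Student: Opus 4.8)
The nontrivial implication is that coherence on $X$ forces coherence on all of $X^{*}$ (the converse is immediate, since $X\subseteq X^{*}$), and the plan is to reduce the general coherence condition to the letter case by an induction driven by the combinatorics of \emph{word reversing}.

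First I would collect the ``associativity'' identities for the word-complement operation that fall out of the confluence of reversing diagrams: for $u,v,w\in X^{*}$, whenever the relevant reversings terminate,
\[
(uv)\setminus w\;\equiv^{+}\;v\setminus(u\setminus w),\qquad
u\setminus(vw)\;\equiv^{+}\;(u\setminus v)\bigl((v\setminus u)\setminus w\bigr),
\]
together with the left-handed symmetric versions and the base case $x\setminus y=f(x,y)$ for $x,y\in X$. The way to see these is to picture a reversing grid --- a rectangle whose horizontal edges carry the positive word, whose vertical edges carry the negative word, and each unit cell filled in via $f$ --- and to observe that a grid is assembled from, hence determined by, its sub-grids. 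Atomicity of $M$ is what guarantees that every such reversing process halts, so that $u\setminus v$ is defined on all the words that occur and $\equiv^{+}$ is tractable; it also supplies the Noetherian measure (the norm $\parallel\cdot\parallel$ on $M$, equivalently the size of a completed reversing diagram) on which the induction below will run.

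Next I would restate coherence at $(u,v,w)$ geometrically: it says exactly that the ``cube'' with edges $u,v,w$ closes, i.e. that the two words $(u\setminus v)\setminus(u\setminus w)$ and $(v\setminus u)\setminus(v\setminus w)$ --- each of which, when it exists, represents $\operatorname{lcm}(u,v,w)$ with $u\vee v$ divided off on the left --- are $\equiv^{+}$-equal, or that neither exists. The core of the argument is then a ``divide and conquer'' step: if, say, $w=w_{1}w_{2}$ with $w_{1},w_{2}$ nonempty, one substitutes the identity for $u\setminus(w_{1}w_{2})$ and $v\setminus(w_{1}w_{2})$ into both sides of the coherence condition and repeatedly applies the identities above to flatten the result; after the dust settles, coherence at $(u,v,w)$ is seen to follow from coherence at $(u,v,w_{1})$ together with coherence at a triple built from $w_{2}$ and complements taken against $w_{1}$ --- triples strictly smaller in the chosen measure. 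Peeling $u$, $v$ and $w$ down one factor at a time, the induction bottoms out at triples of letters (the degenerate cases involving the empty word being trivial), where coherence holds by hypothesis.

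The main obstacle is the bookkeeping in that reduction: one must verify that the substitution really does express the ``large'' instance of coherence as a consequence of strictly ``smaller'' ones with no circular dependency, and that the induction measure genuinely drops at each step --- and it is precisely here that atomicity (finiteness of norms, termination of reversing) cannot be dispensed with. Once the identities and the well-founded measure are pinned down, the induction itself is mechanical. (This is \cite[Prop.~6.1]{deh_francais}; the reversing-diagram formalism underlying the identities above is developed in \cite{deh_livre}.)
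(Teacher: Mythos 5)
The paper gives no proof of this proposition at all---it is imported verbatim from Dehornoy \cite[Prop.6.1]{deh_francais}---so there is no internal argument to compare against. Your sketch follows the same route as the cited source (the product identities for $\setminus$ read off reversing grids, plus an induction on a well-founded measure supplied by atomicity), and the roadmap is correct; as a sketch, the decisive bookkeeping---that the reduction to smaller triples genuinely terminates on the chosen measure---is left to Dehornoy's argument, which is exactly where the cited proof does the work.
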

\textbf{Example:} In  example $1$, the monoid is atomic since all
the defining relations are length-preserving. So, due to proposition \ref{atomic_coh}, in order to
check the coherence of $f$ it is enough to check the coherence on
$X$ only. As an example,we will check if
$( (x_{1} \setminus
x_{2} )\setminus (x_{1}\setminus x_{3}))
\setminus((x_{2} \setminus x_{1})\setminus(x_{2}
\setminus x_{3}))= \epsilon$ holds in $M$:\\ $\begin{array}{ccc}
 x_{1}\setminus x_{2}=x_{1} & x_{2}\setminus x_{1}=x_{2}\\
x_{1}\setminus x_{3}=x_{2} & x_{2}\setminus x_{3}=x_{4}\\
(x_{1}\setminus x_{2})\setminus(x_{1}\setminus x_{3})=x_{1}\setminus x_{2}=x_{1}
&
(x_{2}\setminus x_{1})\setminus(x_{2}\setminus x_{3})=
x_{2}\setminus x_{4}=x_{1}\\
\end{array}$
\\
So,
$((x_{1}\setminus x_{2})\setminus(x_{1}\setminus x_{3}))
\setminus
((x_{2}\setminus x_{1})\setminus(x_{2}\setminus x_{3}))=
x_{1}\setminus x_{1}=\epsilon $
\begin{prop} \cite[Lemmas.5.7-5.9]{deh_francais} \cite[p.55]{deh_livre}\label{coh_c1,2}
Let $M$ be a monoid associated with a coherent complement. Then
$M$ satisfies $C_{1}$  and $C_{2}$ (i.e
 $M$ is left
cancellative and any two elements in $M$ with a right common
multiple admit a right lcm respectively).
\end{prop}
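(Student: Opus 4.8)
The plan is to extract both conditions from the combinatorics of word reversing, the whole point of coherence being that it makes reversing \emph{confluent}. Recall that reversing rewrites a word on $X\cup X^{-1}$ by repeatedly replacing a factor $x^{-1}y$ by $f(x,y)f(y,x)^{-1}$; since $f(x,x)=\epsilon$, a factor $x^{-1}x$ simply disappears, and a trivial induction on $|u|$ shows that $u^{-1}u$ reverses to the empty word $\epsilon$ for every $u\in X^{*}$. The one fact I would record first, valid for \emph{any} complement, is a ``tracing'' lemma: if $u^{-1}v$ reverses to a positive--negative word $v_{1}u_{1}^{-1}$ (with $u,v,u_{1},v_{1}\in X^{*}$), then $uv_{1}\equiv^{+}vu_{1}$ in $X^{*}$. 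This is immediate once one reads the reversing process as a rectangular ``reversing diagram'' whose cells are the instances $xf(x,y)=yf(y,x)$ of the defining relations of $M$ and composes these cells along the boundary. In particular, if $u^{-1}v$ reverses to $\epsilon$ then $u\equiv^{+}v$.

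The crux is the \textbf{confluence lemma}: if $f$ is coherent, then for all $u,v\in X^{*}$ the word $u^{-1}v$ reverses to a positive--negative word $v_{1}u_{1}^{-1}$, the factors $v_{1}$ and $u_{1}$ being uniquely determined modulo $\equiv^{+}$, and moreover $u\equiv^{+}v$ forces $u^{-1}v$ to reverse to $\epsilon$. This is the step I expect to be the genuine obstacle. Coherence at a triple $(u,v,w)$ is precisely the statement that the two ways of reducing the ``cube'' spanned by $u,v,w$ agree, so it supplies exactly the local confluence one needs, and a Newman-type induction then propagates local to global confluence (together with the termination statement; in the atomic situations of this paper, where all relations are length-preserving, termination is automatic, and by Proposition~\ref{atomic_coh} coherence then needs only to be checked on $X$, which is how it is verified in practice). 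Granting the lemma, $u\setminus v:=v_{1}$ and $v\setminus u:=u_{1}$ are well defined modulo $\equiv^{+}$, and the ``chain rule'' $u\setminus(vw)\equiv^{+}(u\setminus v)\bigl((v\setminus u)\setminus w\bigr)$ follows by reversing the word $u^{-1}vw$ first through the prefix $u^{-1}v$ and then through what remains, and comparing with its reversal to positive--negative form by confluence.

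Condition $C_{1}$ is then quick. If $wu\equiv^{+}wv$, the word $(wu)^{-1}(wv)=u^{-1}\,w^{-1}w\,v$ reverses to $\epsilon$ by the confluence lemma; but $w^{-1}w$ reverses to $\epsilon$, so the same word also reverses to $u^{-1}v$, whence, applying the confluence lemma once more, $u^{-1}v$ reverses to $\epsilon$, and the tracing lemma gives $u\equiv^{+}v$. For $C_{2}$, suppose $u,v\in X^{*}$ have a common right multiple $m\equiv^{+}up_{1}\equiv^{+}vp_{2}$. One checks first, using coherence and the presence of $m$ to bound the process, that $u^{-1}v$ reverses to positive--negative form, so $u\setminus v$ exists; put $\ell:=u(u\setminus v)$, which by the tracing lemma equals $v(v\setminus u)$ modulo $\equiv^{+}$, hence is a common right multiple of $u,v$. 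To see it is the least, take any common right multiple $m'\equiv^{+}uq_{1}\equiv^{+}vq_{2}$: reversing $u^{-1}m'$ (and using that reversing is compatible with $\equiv^{+}$, again a consequence of confluence) gives $u\setminus m'\equiv^{+}q_{1}$, while the chain rule applied to $m'\equiv^{+}vq_{2}$ gives $u\setminus m'\equiv^{+}(u\setminus v)\bigl((v\setminus u)\setminus q_{2}\bigr)$; so $u\setminus v$ left-divides $q_{1}$ and therefore $\ell=u(u\setminus v)$ left-divides $uq_{1}\equiv^{+}m'$. Thus $\ell$ is a right lcm of $u$ and $v$, which is $C_{2}$.

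In short, the only place where real work is hidden is the confluence lemma together with its attendant termination statement; once that is in hand, everything is bookkeeping inside the reversing calculus, and the condition $C_{0}$ plays no role here.
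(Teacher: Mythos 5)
This proposition is not proved in the paper at all: it is imported verbatim from Dehornoy (Lemmas 5.7--5.9 of \cite{deh_francais}, \cite[p.55]{deh_livre}), and your outline --- the tracing lemma plus the statement that coherence makes word reversing complete/confluent, from which $C_{1}$ and $C_{2}$ fall out by the bookkeeping you describe --- is essentially the argument of those cited sources, so it matches the (implicit) approach of the paper. The one genuinely hard step, that coherence yields completeness of reversing (with the attendant termination issues you rightly note are harmless here because the relations are length-preserving and a common multiple bounds the diagram), is exactly the content of Dehornoy's lemmas; you flag it rather than prove it, which is consistent with the paper treating it as a quoted result.
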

\subsection{$\Delta-$pure Garside monoids and groups}

\textbf{Notation:} Assume that $M$ is a Gaussian monoid. For $X,Y \subseteq M$, we denote by $Y \setminus X$ the set of elements $b \setminus a$ for $a \in X$ and $b \in Y$. We write $Y \setminus a$ for $Y \setminus \{a\}$ and $b \setminus X$ for $\{b\} \setminus X$.
\begin{defn}\cite[Defn.1.11]{deh_francais}
Let $M$ be a Garside monoid with  $X$ a set of atoms. An element
is called \textbf{simple} if it belongs to the closure of $X$
under right complement and right lcm.
\end{defn}
\textbf{Example:}\cite[Ex.1.1]{picantin}
$M=\operatorname{Mon}\langle x,y \mid xyyxyxyyx=yxyyxy \rangle$ is a Garside monoid.
As an example, the word $yyxyxyyx$ which represents the element $x \setminus y$ is a simple element.
\begin{defn}\cite[p.95]{picantin}
Assume that $M$ is a Garside monoid, $\chi$ is its set of simples and $\Delta$ its Garside element. The \textbf{exponent} of $M$ is the order of the automorphism $\phi$, where $\phi$ is the extension of the function $x \rightarrow (x\setminus \Delta)\setminus \Delta$ from $\chi$ into itself.
\end{defn}
\begin{defn}\cite[p.97]{picantin}
Assume that $M$ is a Garside monoid. For every $a$ in $M$, we define
$\Delta_{a} = \vee \{b \setminus a ; b \in M\}$, where $\vee$ denotes the right lcm.
\end{defn}
\begin{defn}\cite[p.116]{picantin}
Assume that $M$ is a Garside monoid and let $X$ be its set of atoms. The monoid $M$ is said to be \textbf{$\Delta-$pure} if for every $x,y$ in $X$, it holds that $\Delta_{x} = \Delta_{y}$.
\end{defn}
\begin{prop}\cite[Prop.4.1]{picantin}\label{center_cyclic}
Assume that $M$ is a $\Delta-$pure Garside monoid, $\Delta$ is its Garside element, $e$ is its exponent and $G$ its group of fractions.
Then the center of $M$ (resp. of $G$) is the infinite cyclic submonoid (resp. subgroup) generated by $\Delta^{e}$.
\end{prop}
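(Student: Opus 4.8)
The plan is to split the statement into the inclusion $\langle\Delta^{e}\rangle\subseteq Z(M)\subseteq Z(G)$, which is elementary, and the reverse inclusions, where $\Delta$-purity is essential; the group statement will be reduced to the monoid statement at the end.

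First I would recall the conjugating automorphism attached to $\Delta$. Since $\Delta$ is a Garside element, every multiple of $\Delta$ is left-divisible by $\Delta$, so for $a\in M$ the element $a\Delta$ may be written $a\Delta=\Delta\,\phi(a)$; one checks that $\phi$ is a monoid automorphism of $M$ fixing $\Delta$, and it extends to the inner automorphism $g\mapsto\Delta^{-1}g\Delta$ of $G$. Next I would identify $\phi$ on the set $\chi$ of simples with the map $s\mapsto(s\setminus\Delta)\setminus\Delta$ used to define the exponent: writing $\Delta=s\,(s\setminus\Delta)=(s\setminus\Delta)\,\bigl((s\setminus\Delta)\setminus\Delta\bigr)$ (the right divisors of $\Delta$ being also left divisors of $\Delta$), one gets $s\Delta=s\,(s\setminus\Delta)\,\bigl((s\setminus\Delta)\setminus\Delta\bigr)=\Delta\,\bigl((s\setminus\Delta)\setminus\Delta\bigr)$, so $\phi(s)=(s\setminus\Delta)\setminus\Delta$ for every simple $s$. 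Hence $e$ is the order of $\phi$ on $\chi$; since $\chi$ contains the atoms and the atoms generate $M$, $e$ is also the order of $\phi$ on $M$, hence on $G$. From $a\Delta^{k}=\Delta^{k}\phi^{k}(a)$ we get that $\Delta^{k}$ is central in $M$ (equivalently in $G$) iff $\phi^{k}=\mathrm{id}$, i.e. iff $e\mid k$. In particular $\Delta^{e}\in Z(M)\subseteq Z(G)$, and since $M$ is atomic (so $\parallel\Delta^{ke}\parallel\geq ke$, and $M$ is torsion-free) the powers $\Delta^{ke}$ are pairwise distinct, giving an infinite cyclic submonoid $\langle\Delta^{e}\rangle\subseteq Z(M)$ and an infinite cyclic subgroup $\langle\Delta^{e}\rangle\subseteq Z(G)$.

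For the reverse inclusion in $M$, let $z\in Z(M)$, $z\neq1$. The decisive point is that $\Delta$ left-divides $z$, and this is exactly where $\Delta$-purity enters. Here I would use Picantin's analysis of the elements $\Delta_{a}=\vee\{b\setminus a;\ b\in M\}$: because $z$ commutes with an atom $x$, the element $z$ serves as a common right multiple controlling the relevant least common multiples, which forces $b\setminus x\preceq z$ for every $b$, hence $\Delta_{x}\preceq z$; by $\Delta$-purity the elements $\Delta_{x}$ over all atoms $x$ coincide in one element $\delta$, and $\delta$ is identified with the Garside element $\Delta$ (in the $\Delta$-pure case one may take $\Delta=\delta=\vee X$). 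So $\Delta\preceq z$; write $z=\Delta z'$. Cancelling $\Delta$ in $x\Delta z'=\Delta z'x$ gives $\phi(x)z'=z'x$ for all atoms $x$, which is enough twisted commutation to re-run the previous argument on $z'$; by induction on $\parallel z\parallel$ (note $\parallel z'\parallel<\parallel z\parallel$) we obtain $z=\Delta^{m}$ for some $m\geq1$. Since $\Delta^{m}$ is central, the first paragraph forces $e\mid m$, so $z=(\Delta^{e})^{m/e}\in\langle\Delta^{e}\rangle$. Hence $Z(M)=\langle\Delta^{e}\rangle$.

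Finally I would deduce $Z(G)=\langle\Delta^{e}\rangle$. Only the inclusion $Z(G)\subseteq\langle\Delta^{e}\rangle$ remains. As $\Delta$ is a Garside element, each element of $M$ left-divides some power of $\Delta$, so every $g\in G$ has the form $g=\Delta^{-N_{0}}a$ with $a\in M$ and $N_{0}\geq0$. Pick $N$ a multiple of $e$ with $N\geq N_{0}$; then $\Delta^{N}g=\Delta^{N-N_{0}}a\in M$, and it is central in $G$ since $\Delta^{N}$ and $g$ are, hence central in $M$. By the previous step $\Delta^{N}g=\Delta^{em}$ for some $m\geq0$, so $g=\Delta^{em-N}=(\Delta^{e})^{m-N/e}\in\langle\Delta^{e}\rangle$. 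Since $G$ is torsion-free this subgroup is infinite cyclic, and therefore $Z(G)=\langle\Delta^{e}\rangle$.

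The step I expect to be the real obstacle is the claim that a central element of $M$ is left-divisible by $\Delta$. This is precisely where $\Delta$-purity cannot be dropped — in general $Z(M)$ factors as a direct product indexed by the $\Delta$-components of $M$ and is strictly larger than $\langle\Delta^{e}\rangle$ — and it rests on Picantin's structural results about the elements $\Delta_{a}$, together with the identification of the common value $\Delta_{x}$ with a Garside element, rather than on elementary divisibility manipulations. Everything else is routine bookkeeping with $\phi$, the norm, and the embedding $M\hookrightarrow G$.
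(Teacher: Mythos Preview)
The paper does not give its own proof of this proposition: it is quoted verbatim as \cite[Prop.~4.1]{picantin} and used as a black box, so there is no argument in the paper to compare your proposal against.

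That said, your outline follows exactly the strategy of Picantin's original proof: identify the conjugation automorphism $\phi$ with $s\mapsto(s\setminus\Delta)\setminus\Delta$ on simples, deduce that $\Delta^{k}$ is central iff $e\mid k$, and then show that any (twisted-)central nontrivial element is left-divisible by $\Delta$, iterating down by the norm. Your reduction from $Z(G)$ to $Z(M)$ via multiplication by a large central power of $\Delta$ is also standard and correct.

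You have correctly located the only genuinely nontrivial step, namely that a nontrivial element $z$ satisfying $\phi^{k}(x)\,z=z\,x$ for all atoms $x$ must be left-divisible by $\Delta$. Your one-line justification (``$z$ serves as a common right multiple \dots\ which forces $b\setminus x\preceq z$'') is not quite an argument, and this is indeed where Picantin does real work: one has to show first that each $\Delta_{x}$ left-divides such a $z$, and second that in the $\Delta$-pure case the common value of the $\Delta_{x}$ is a Garside element. Neither of these is a one-liner; they rely on Picantin's structural lemmas about the operators $a\mapsto\Delta_{a}$ (closure and compatibility with $\phi$). Since you explicitly flag this and defer to Picantin's results, your proposal is an accurate high-level reconstruction rather than a self-contained proof, which is appropriate given that the paper itself treats the statement as imported.
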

In \cite{picantin}, Picantin defines a monoidal version of the definition of crossed product and he shows the following:
\begin{prop}\cite[Prop.4.5]{picantin}
A Garside monoid is an iterated crossed product of $\Delta-$pure Garside submonoids.
\end{prop}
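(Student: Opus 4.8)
The plan is to induct on the number $n$ of atoms of $M$, peeling off a single $\Delta$-pure Garside factor at each step. The base case is trivial: if $M$ is already $\Delta$-pure --- in particular if $n=1$ --- then $M$ is its own (length-one) iterated crossed product. So assume $M$ is a Garside monoid with set of atoms $X=\{x_{1},\dots,x_{n}\}$ and is not $\Delta$-pure. Since $M$ is atomic, each $\Delta_{x_{i}}$ has finite norm, so we may choose an atom $x$ for which $\Delta_{x}$ is minimal with respect to left divisibility among $\Delta_{x_{1}},\dots,\Delta_{x_{n}}$. Put $X_{N}=\{y\in X\,:\,y\ \text{is a left divisor of}\ \Delta_{x}\}$ and let $N$ be the submonoid of $M$ generated by $X_{N}$. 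Note that $x=1\setminus x$ is a left divisor of $\Delta_{x}$, so $x\in X_{N}$ and $X_{N}\neq\emptyset$.

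The technical core is two assertions. \emph{(A)} $\Delta_{x}$ is a Garside element of $N$; $N$ is a $\Delta$-pure Garside monoid whose set of atoms is exactly $X_{N}$; and for every atom $y$ of $N$ one has $\Delta^{N}_{y}=\Delta_{x}$ (the $\Delta_{y}$ computed inside $N$). \emph{(B)} $N$ is normal in $M$ in the sense of \cite{picantin}: every $g\in M$ factors uniquely as $g=g_{N}g'$ with $g_{N}$ the largest left divisor of $g$ lying in $N$; the ``quotient'' $M'$ of $M$ by $N$ is again a Garside monoid, with set of atoms the images of $X\setminus X_{N}$; and the section $M'\to M$ together with the conjugation action it induces on the atoms $X_{N}$ of $N$ realizes $M$ as the monoidal crossed product $N\rtimes M'$. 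Granting (A) and (B), we first observe $X_{N}\neq X$: otherwise $N=M$, and then (A) would say that $M$ itself is $\Delta$-pure, contrary to hypothesis. Hence $M'$ has strictly fewer than $n$ atoms, so by the induction hypothesis $M'$ is an iterated crossed product of $\Delta$-pure Garside submonoids; prefixing the factor $N$ completes the induction.

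For (A) the essential input is the self-similarity of the operator $a\mapsto\Delta_{a}$, namely $\Delta_{\Delta_{x}}=\Delta_{x}$, together with the fact that $\Delta_{x}$ left divides a power of $\Delta$, so that left and right divisibility inside $N$ are controlled by those in $M$. From this one deduces in turn that the left divisors of $\Delta_{x}$ in $N$ coincide with its right divisors and generate $N$; that $N$ inherits left and right cancellativity and the existence of left and right lcm's from $M$, so that $N$ is Gaussian with Garside element $\Delta_{x}$; and finally that each $\Delta^{N}_{y}$ for $y\in X_{N}$ equals $\Delta_{x}$, which is exactly $\Delta$-purity of $N$. For (B) one uses the left-greedy normal form attached to the Garside structure of $M$ to produce and make unique the factorization $g=g_{N}g'$, checks that multiplication descends to $M'$ and that $M'$ satisfies the conditions $(C_{0})$--$(C_{3})$, so that it is Garside by Theorem~\ref{gars_critere}, and identifies the cocycle measuring the failure of the section $M'\to M$ to be a homomorphism; this cocycle, together with the conjugation action of a transversal on $X_{N}$, constitutes the crossed-product datum.

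The main obstacle is assertion (A), and within it the fact that the special submonoid $N$ --- generated by the left divisors of the minimal $\Delta_{x}$ --- is Garside with $\Delta_{x}$ as Garside element and is $\Delta$-pure. Unlike the case of standard parabolic submonoids of Artin monoids, the submonoid generated by an arbitrary set of atoms of a Garside monoid is almost never Garside, so everything hinges on pushing the self-similarity identities for $\Delta_{(-)}$, and the minimality of the chosen $\Delta_{x}$, through carefully enough to certify both claims. Once (A) is secured, (B) and the induction are comparatively routine bookkeeping with normal forms.
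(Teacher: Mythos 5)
This proposition is not proved in the paper at all: it is imported verbatim from Picantin (\cite[Prop.4.5]{picantin}), so there is no internal proof to compare your attempt against; your proposal has to stand on its own as a proof of Picantin's theorem, and it does not. What you have written is an induction skeleton in which the two assertions (A) and (B) carry the entire mathematical content, and both are asserted rather than established. Assertion (A) --- that the submonoid $N$ generated by the atoms left-dividing a minimal $\Delta_{x}$ is Garside with Garside element $\Delta_{x}$, has atom set exactly $X_{N}$, and is $\Delta$-pure --- is essentially the whole difficulty of the theorem, as you yourself concede; invoking ``the self-similarity of $a\mapsto\Delta_{a}$'' and the identity $\Delta_{\Delta_{x}}=\Delta_{x}$ is not a proof that the divisors of $\Delta_{x}$ are closed under right lcm and complement, that left and right divisors of $\Delta_{x}$ coincide inside $N$, that $N$ inherits cancellativity and lcm's, or that $\Delta^{N}_{y}=\Delta_{x}$ for every $y\in X_{N}$; each of these requires an argument you have not given, and the minimality of $\Delta_{x}$ enters nowhere explicitly.

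Assertion (B) is worse than unproven: as stated it is not even a well-posed construction. You speak of ``the quotient $M'$ of $M$ by $N$'' with atoms ``the images of $X\setminus X_{N}$,'' but no congruence on $M$ is defined, and quotienting a monoid by a submonoid has no canonical meaning; moreover the statement to be proved decomposes $M$ into $\Delta$-pure Garside \emph{submonoids}, so the second factor must itself be exhibited as a submonoid of $M$ (together with the crossed-product datum in Picantin's monoidal sense), and showing that this complementary factor is Garside is again a claim of the same order of difficulty as (A), not ``routine bookkeeping with normal forms.'' The left-greedy factorization $g=g_{N}g'$ with $g_{N}$ the maximal left divisor of $g$ in $N$ also presupposes that such a maximal divisor exists and behaves multiplicatively, which depends on (A) and on a normality property of $N$ that is never verified. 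In short, the proposal is a plausible plan whose gaps coincide exactly with the theorem itself; as a proof it has a genuine and fatal gap at both (A) and (B).
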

\section{Structure groups of non-degenerate and symmetric ``set-theoretical'' solutions are Garside}
As before, we have  $X=\{x_{1},x_{2},...,x_{n}\}$ and
$S(i,j)=(g_{i}(j),f_{j}(i))$.\\
We recall that the relations are obtained in the following
way:\\$S(i,j)=(k,l)$ implies $x_{i}x_{j}=x_{k}x_{l}$
for every $x_{i},x_{j} \in X$. \\The structure group corresponding
to $(X ,S)$ is denoted by  $G$ and $M$ is the monoid with the same
presentation. The aim of this section is to prove the following
theorem:
\begin{thm}\label{theo:garside}
The structure group $G$ of a non-degenerate, braided and involutive
``set-theoretical'' solution of the quantum Yang-Baxter equation is a Garside group.
\end{thm}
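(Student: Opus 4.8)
The plan is to verify the five conditions $(C_0)$, $(C_1)$, $(\tilde{C_1})$, $(C_2)$, $(C_3)$ of Theorem~\ref{gars_critere} for the monoid $M$ presented by $\operatorname{Mon}\langle X\mid x_ix_j=x_kx_l \text{ when } S(i,j)=(k,l)\rangle$. Condition $(C_0)$ is immediate: all defining relations are length-preserving, so $M$ is atomic by Basic Fact (a), whence $1$ is the only invertible element by Basic Fact (c). For the remaining conditions, the key observation announced in the introduction is that the defining relations make $S$ visible as a complement on $X$: one sets $f(x_i,x_j)=x_l$ and $f(x_j,x_i)=x_k$ whenever $S(i,j)=(k,l)$, and one must check this is well-defined and genuinely a complement, i.e.\ $f(x_i,x_i)=\epsilon$ and $f(x_j,x_i)$ exists whenever $f(x_i,x_j)$ does. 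The involutivity $S^2=\mathrm{id}$ (Claim~\ref{debut_form}(ii)) is exactly what guarantees that a relation $x_ix_j=x_kx_l$ is paired with $x_kx_l=x_ix_j$, so the complement is symmetric, and non-degeneracy guarantees each letter pair occurs in exactly one relation, so $f$ is single-valued; this is where I would use the explicit formulas for $f$ and $g$.

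Next I would establish coherence of the complement $f$ on $X$. Since $M$ is atomic, Proposition~\ref{atomic_coh} reduces coherence on all of $X^*$ to coherence on the finite set $X$, and Proposition~\ref{coh_c1,2} then delivers $(C_1)$ (left cancellativity) and $(C_2)$ (right lcm of elements with a common right multiple) for free. So the real content is: for all $x,y,z\in X$, show
\[
\bigl((x\setminus y)\setminus(x\setminus z)\bigr)\setminus\bigl((y\setminus x)\setminus(y\setminus z)\bigr)\equiv^{+}\epsilon,
\]
where each $u\setminus v$ on letters is read off from the tables for $f,g$. The plan is to translate every $x_i\setminus x_j$ into the functional language via $f$ and $g$ — e.g.\ from $S(i,j)=(g_i(j),f_j(i))$ one gets $x_i\setminus x_j = x_{f_j(i)}$ when $i\neq j$ — and then the desired identity should reduce, after expanding, to the braid-type identities of Claim~\ref{debut_form}(iii), namely $g_ig_j=g_{g_i(j)}g_{f_j(i)}$, $f_jf_i=f_{f_j(i)}f_{g_i(j)}$, and the mixed linking relation, together with the involutivity identities of Claim~\ref{debut_form}(ii). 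This bookkeeping — carefully tracking which indices are being acted on and matching the iterated complements to the three braid relations — is where I expect the main obstacle to lie; it is essentially a disciplined but intricate unwinding of the word-reversing process on three letters.

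It remains to handle $(C_3)$ and $(\tilde{C_1})$. For $(C_3)$ I would take $P$ to be the generating set $X$ itself together with (finitely many) new elements forced by closure under $\setminus$; but in fact, because every relation has both sides of length $2$, the reversing of $x_i^{-1}x_j$ produces $f(x_i,x_j)f(x_j,x_i)^{-1}$ with both factors single letters, so $X$ is already closed under $\setminus$ on letters, and the finiteness of $X$ gives $(C_3)$ directly. Equivalently, one can invoke that $\chi$, the set of simples, is finite and contains a Garside element $\Delta$ (the right lcm of the atoms, which exists once we know it is well-defined and all generators divide a common multiple — this uses atomicity plus $(C_2)$, and one checks the left and right divisors of $\Delta$ coincide by the symmetry of $f$). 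Finally $(\tilde{C_1})$, right cancellativity of $M$, does not follow formally from the reversing machinery and must be proved separately; the excerpt itself signals this by deferring it to the appendix, so here I would simply cite that appendix result. With all five conditions of Theorem~\ref{gars_critere} in hand, $M$ is a Garside monoid; since a Gaussian (a fortiori Garside) monoid satisfies both Ore conditions and embeds in its group of fractions, and that group of fractions is exactly $G=\operatorname{Gp}\langle X\mid R\rangle$ by construction, $G$ is a Garside group, proving Theorem~\ref{theo:garside}.
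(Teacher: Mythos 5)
Your overall skeleton is exactly the paper's: verify $(C_{0})$, $(C_{1})$, $(\tilde{C_{1}})$, $(C_{2})$, $(C_{3})$ for the monoid $M$ with the same presentation, get $(C_{0})$ from length-preserving relations (atomicity), get $(C_{1})$ and $(C_{2})$ from coherence checked only on letters via Proposition \ref{atomic_coh} and Proposition \ref{coh_c1,2}, get $(C_{3})$ because complements of letters are letters, and defer right cancellativity to the appendix argument. However, there is a genuine gap at the central step, and it is compounded by an incorrect dictionary. First, your definition of the complement is mis-indexed: if $S(i,j)=(k,l)$, the defining relation is $x_{i}x_{j}=x_{k}x_{l}$, so the complement pairs are $f(x_{i},x_{k})=x_{j}$ and $f(x_{k},x_{i})=x_{l}$, not $f(x_{i},x_{j})=x_{l}$, $f(x_{j},x_{i})=x_{k}$ (the latter would encode the relation $x_{i}x_{l}=x_{j}x_{k}$, which is not a defining relation). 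Likewise your translation $x_{i}\setminus x_{j}=x_{f_{j}(i)}$ is wrong: from $x_{i}a=x_{j}b$ with $S(i,a)=(j,b)$ one gets $g_{i}(a)=j$, hence $x_{i}\setminus x_{j}=x_{g_{i}^{-1}(j)}$ (Lemma \ref{form_compl}); the element $x_{f_{j}(i)}$ is instead $x_{g_{i}(j)}\setminus x_{i}$. With the wrong dictionary the promised reduction of the coherence identity to Claim \ref{debut_form}(iii) does not go through as written.

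Second, the coherence verification itself --- which you correctly identify as the heart of the matter --- is only announced, not performed. In the paper it is done by computing, for distinct $i,k,m$,
\[
(x_{i}\setminus x_{k})\setminus(x_{i}\setminus x_{m})=g^{-1}_{g_{i}^{-1}(k)}g_{i}^{-1}(m),\qquad
(x_{k}\setminus x_{i})\setminus(x_{k}\setminus x_{m})=g^{-1}_{g_{k}^{-1}(i)}g_{k}^{-1}(m),
\]
and showing these agree. That reduction uses, besides the braided identity $g_{i}g_{j}=g_{g_{i}(j)}g_{f_{j}(i)}$, the auxiliary identity $g_{k}^{-1}(i)=f_{g_{i}^{-1}(k)}(i)$ coming from involutivity (Lemma \ref{formule}); this identity is the bridge that lets the single $g$-braid relation close the computation, and it is absent from your outline. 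So while your approach is the paper's approach, as it stands the proposal neither sets up the complement correctly nor carries out (or correctly prepares) the coherence computation on which conditions $(C_{1})$ and $(C_{2})$ --- and hence the whole theorem --- rest.
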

In order to prove that the group $G$ is a Garside group, we will
 show that \textbf{the monoid $M$ with the same presentation} is
a Garside monoid. For that, we will use the Garsidity criterion
given in  theorem \ref{gars_critere}, that is we will show that
$M$ satisfies the conditions $(C_{0})$ , $(C_{1})$ , $(C_{2})$,
$(C_{3})$ and  $(\tilde{C_{1}})$.
\subsection{$M$  is atomic}

We will show that $M$ is atomic and this implies that $M$ satisfies $(C_{0})$, that is $1$ is the unique invertible element in $M$.
In order to show that $M$ is atomic, we describe the relations and show that the relations are length-preserving.
\begin{claim}\label{cl_compl}
Assume $(X,S)$ is non-degenerate.
Let $x_{i}$ and $x_{j}$ be different elements in $X$ (i.e generators of $M$). Then
there is exactly one defining relation $x_{i} a= x_{j} b$, where
$a,b$ are in $X$. Hence, $f$ is defined in $X \times X$.
If in addition, $(X,S)$ is involutive then $a$ and $b$ are different.
\end{claim}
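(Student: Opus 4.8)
The plan is to read the defining relations directly off the formula $S(i,j)=(g_{i}(j),f_{j}(i))$: by Definition~\ref{def_rel_struct} the relation carried by the ordered pair $(i,j)$ is $x_{i}x_{j}=x_{g_{i}(j)}x_{f_{j}(i)}$, and I want to isolate those of the shape $x_{i}a=x_{j}b$ with $x_{i}\neq x_{j}$. First, for existence, fix distinct $i,j$; since $(X,S)$ is non-degenerate, Claim~\ref{debut_form}(i) gives that $g_{i}$ is a bijection of $X$, so there is a unique $m$ with $g_{i}(m)=j$. Putting $s=f_{m}(i)$ we get $S(i,m)=(j,s)$, hence $x_{i}x_{m}=x_{j}x_{s}$ is a defining relation of the required form (and it is not a trivial relation, since its two sides begin with the distinct letters $x_{i},x_{j}$). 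So we may take $f(x_{i},x_{j})=x_{m}$.

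Next, for uniqueness, I would observe that any defining relation of the shape $x_{i}a=x_{j}b$ is the one carried by some pair $(p,q)$, and matching the two length-two words leaves exactly two possibilities: type~I, in which $p=i$, $a=x_{q}$, $g_{i}(q)=j$, $b=x_{f_{q}(i)}$; or type~II, in which $p=j$, $b=x_{q}$, $g_{j}(q)=i$, $a=x_{f_{q}(j)}$. Injectivity of $g_{i}$ (resp.\ $g_{j}$) shows there is at most one relation of each type, so the whole matter reduces to checking that the type~I and type~II relations are the \emph{same} equation. If the type~I relation comes from $(i,m)$ with $g_{i}(m)=j$ and $s=f_{m}(i)$, the involutivity identities of Claim~\ref{debut_form}(ii) applied to $(i,m)$ read $g_{g_{i}(m)}(f_{m}(i))=i$ and $f_{f_{m}(i)}(g_{i}(m))=m$, i.e.\ $g_{j}(s)=i$ and $f_{s}(j)=m$; therefore $S(j,s)=(i,m)$, so the pair $(j,s)$ carries the relation $x_{j}x_{s}=x_{i}x_{m}$ — literally the type~I equation $x_{i}x_{m}=x_{j}x_{s}$ — and by injectivity of $g_{j}$ it is the unique type~II relation. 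Hence there is exactly one defining relation $x_{i}a=x_{j}b$, so $f$ is a well-defined total map on $X\times X$ (with $f(x_{i},x_{i})=\epsilon$), and — noting that under involutivity there is also no nontrivial relation of the form $x_{i}x_{m}=x_{i}x_{s}$, by the same use of Claim~\ref{debut_form}(ii) and injectivity of $g_{i}$ — the structure monoid $M$ is exactly the monoid associated with the complement $f$.

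For the last assertion, I would argue by contradiction: suppose $a=b=x_{m}$. Matching words as above, and swapping $i$ and $j$ if necessary, gives $S(i,m)=(j,m)$, so $g_{i}(m)=j$ and $f_{m}(i)=m$. Applying Claim~\ref{debut_form}(ii) to $(i,m)$ gives $f_{f_{m}(i)}(g_{i}(m))=m$, that is $f_{m}(j)=m$. But $f_{m}$ is a bijection by non-degeneracy and now $f_{m}(i)=f_{m}(j)=m$, so $i=j$, a contradiction; hence $a\neq b$.

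The only genuinely nontrivial point is the coincidence of the type~I and type~II relations — equivalently, that the chosen presentation of the structure group really is a complement presentation — and this is exactly where the hypothesis $S^{2}=\operatorname{id}$ is indispensable; with non-degeneracy alone one could have two distinct relations between $x_{i}$-words and $x_{j}$-words. I would therefore present that step first and in isolation, since the rest of this section (atomicity of $M$, and the coherence computations yielding $(C_{1})$ and $(C_{2})$) depends on realizing $M$ as the monoid associated with $f$.
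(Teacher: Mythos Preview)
Your argument is correct and runs along the same lines as the paper's: existence via the bijectivity of $g_{i}$, uniqueness via its injectivity, and $a\neq b$ via the involutivity identity together with injectivity of $f_{a}$. The one place where you go further is the uniqueness step. The paper's proof tacitly treats every relation of the form $x_{i}a=x_{j}b$ as arising from $S(i,a)=(j,b)$ and concludes directly from the injectivity of $g_{i}$; you observe that such a relation could equally well come from $S(j,b)=(i,a)$, separate these into ``type~I'' and ``type~II'', and then use $S^{2}=\mathrm{id}$ to show the two readings yield the same equation. Your care here is warranted: with non-degeneracy alone one can indeed have two distinct relations of the shape $x_{i}\,\cdot=x_{j}\,\cdot$, so involutivity is what makes the complement $f$ single-valued on $X\times X$ and what makes the identification of $M$ with the monoid associated with $f$ (Claim~\ref{complement_f_defined}) go through. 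The paper simply elides this point.
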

\begin{proof}
We have to show that for each pair  of generators $x_{i}$ and $x_{j}$,
 there are two different generators $a$ and $b$ such that we have
$S(i,a)=(j,b)$.
We recall that $S$ is non-degenerate implies that the functions
 $g_{k}$ are bijective for every $1\leq k \leq n$.
So, let us take  $a$ to be $a=g_{i}^{-1}(j)$. That  $a$ exists and is unique follows from the injectivity of $g_{i}$.
Next, take $b$ to be $b=f_{a}(i)$.\\
So, it holds that $S(i,a)=(g_{i}(a),f_{a}(i))=(j,b)$.\\
Assume there are two defining relations $x_{i} a= x_{j} b$ and $x_{i} c= x_{j} d$, where
$a,b,c,d$ are in $X$ and $a \neq c$. Then we obtain $g_{i}(a)=j$ and $g_{i}(c)=j$ which contradicts the fact that $g_{i}$ is bijective, so the defining  relation $x_{i} a= x_{j} b$ is unique.\\
Assume  $a=b$, that is $S(i,a)=(j,a)$ . Since $S$ is involutive, we have that $S(i,a)=(g_{i}(a),f_{a}(i))=(j,a)$ and
$S(j,a)=(g_{j}(a),f_{a}(j))=(i,a)$, that is $f_{a}(i)=f_{a}(j)=a$.
But this contradicts the injectivity of $f_{a}$, so $a,b$ are different.
\end{proof}
\begin{rem}\label{rem_cancel}
From claim \ref{cl_compl}, if $(X,S)$ is non-degenerate and involutive then  there can be no relation of the form $x_{i}a=x_{j}a$, where $i \neq j$. Using the same arguments (involving the injectivity of the functions $g_{.}$),  there can be no relation of the form $ax_{i}=ax_{j}$, where $i \neq j$ (see lemma \ref{lem_cancel}).
\end{rem}
\begin{claim} \label{complement_f_defined}
Assume $(X,S)$ is non-degenerate.\\
(a)The complement $f$ is totally defined on $X \times X$,
its range is $X$ and the monoid associated to $f$ is $M$.\\
(b) $M$ is atomic.
\end{claim}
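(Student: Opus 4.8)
The plan is to get part (a) directly out of Claim~\ref{cl_compl} and then obtain part (b) from the fact that the defining relations of $M$ are length-preserving. First I would record that, by Claim~\ref{cl_compl}, for every ordered pair of distinct generators $(x_i,x_j)$ there is a unique defining relation $x_ia=x_jb$ with $a,b\in X$; setting $f(x_i,x_j)=a$, $f(x_j,x_i)=b$ and $f(x_i,x_i)=\epsilon$ for all $i$ therefore defines $f$ on the whole of $X\times X$, and the two complement axioms ($f(x,x)=\epsilon$, and $f(y,x)$ exists whenever $f(x,y)$ does) hold trivially because $f$ is total. For distinct $x_i,x_j$ the proof of Claim~\ref{cl_compl} gives $a=x_{g_i^{-1}(j)}$; since $g_i$ is a bijection, as $x_j$ runs over $X$ the value $f(x_i,x_j)$ runs over all of $X$, so the range of $f$ is $X$.

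It then remains to identify the monoid $X^{*}/ \equiv^{+}$ associated with $f$ with $M$, which I would do by comparing defining relations. The relations of $X^{*}/ \equiv^{+}$ are the words $x_if(x_i,x_j)=x_jf(x_j,x_i)$, i.e. exactly the relations $x_ia=x_jb$ of Claim~\ref{cl_compl}, each unordered pair of generators contributing one of them. On the other side, a defining relation $x_ix_j=x_kx_l$ of $M$ coming from $S(i,j)=(k,l)$ with $i=k$ is trivial: involutivity gives $S(i,l)=(i,j)$, hence $g_i(j)=i=g_i(l)$, and injectivity of $g_i$ forces $j=l$; a defining relation with $i\neq k$ is, again by Claim~\ref{cl_compl}, the unique relation attached to the pair $\{x_i,x_k\}$, hence one of the relations just listed. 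So $M$ and $X^{*}/ \equiv^{+}$ have the same presentation up to trivial relations, and therefore coincide.

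For part (b), since every $f(x,y)$ has length at most $1$, each relation $x f(x,y)=y f(y,x)$ is length-preserving, so by Basic Fact~(a) every element of $M$ has finite norm; moreover each generator $x_i$ is an atom (a one-letter word cannot be $\equiv^{+}$-equivalent to $\epsilon$ when all relations preserve length, and $x_i=yz$ with $y,z\neq 1$ would force $\parallel x_i\parallel\geq 2$), so $M$ is generated by its atoms and hence atomic. By Basic Fact~(c) this also yields that $1$ is the unique invertible element, so $M$ satisfies $(C_0)$, as required. The whole argument is essentially bookkeeping once Claim~\ref{cl_compl} is available; the only point needing a small extra observation is the triviality of the defining relations with $i=k$, where involutivity and the bijectivity of $g_i$ re-enter, and I do not anticipate any genuine obstacle.
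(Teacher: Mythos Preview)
Your proof is correct and follows the same approach as the paper's (deduce (a) directly from Claim~\ref{cl_compl} and (b) from the length-preserving nature of the relations), only with considerably more detail; in particular you explicitly verify that the defining relations of $M$ with $i=k$ are trivial, a point the paper's one-line argument simply glosses over. Note that this last step invokes involutivity, which is not listed among the hypotheses of the claim but is the standing assumption of Section~4 and is in fact needed for the identification of $M$ with the monoid associated to $f$ to hold.
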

\begin{proof}
(a)This is a direct result from claim \ref{cl_compl}.
 The complement $f$ is defined in a unique way and the
  congruence generated by the relations $xf(x,y)=yf(y,x)$ is exactly the same
   as the congruence generated by the relations in $M$, so the monoid associated to $f$ is $M$.\\
(b)    $M$ is atomic since
 all the defining relations are length-preserving.
\end{proof}
\begin{cor}\label{M_atomic}
Assume $(X,S)$ is non-degenerate and involutive.
 There are $4$ kinds of  defining relations which can be described
in the following way:
\begin{tabular}{|c|c|c|}\hline
  $x^{2}_{i}=x_{j}x_{k}$ & quad &  $j,k \neq i$ \\
  $x_{i}x_{j}=x_{k}x_{l}$ &4-diff  & $\{k,l\} \neq \{i,j\}$ \\
  $x_{i}x_{j}=x_{j}x_{k}$   & log & $k \neq i$ \\
  $x_{i}x_{j}=x_{j}x_{i}$ & comm &\\ \hline
\end{tabular}\\
Note that if $(X,S)$ is not involutive, then  relations of the form  $x_{i}x_{j}=x_{i}x_{k}$  and $x_{j}x_{i}=x_{k}x_{i}$, where $j \neq k$,  occur.
\end{cor}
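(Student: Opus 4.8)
The statement is a purely combinatorial classification of the defining relations, and I would obtain it by a short case analysis resting on Claim \ref{cl_compl} and Remark \ref{rem_cancel}. Recall first that every defining relation has the form $x_ix_j=x_kx_l$ coming from an equation $S(i,j)=(k,l)$, that both sides have length $2$ because $S$ maps $X^2$ into $X^2$, and that the trivial relations (the fixed points of $S$) are discarded. Since $(X,S)$ is involutive we also have $S(k,l)=(i,j)$, so the relations $x_ix_j=x_kx_l$ and $x_kx_l=x_ix_j$ coincide and we may always flip a relation. The crucial observation is that for a non-trivial relation $x_ix_j=x_kx_l$ one has $i\neq k$ \emph{and} $j\neq l$: if $i=k$ then involutivity gives $g_i(j)=k=i$ and $g_i(l)=i$, so $j=l$ by injectivity of $g_i$, contradicting non-triviality; the inequality $j\neq l$ follows the same way from injectivity of $f_j$. (This is exactly the content of Remark \ref{rem_cancel}.)

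Next I would split according to whether one side is a square. If $i=j$, write the relation $x_i^2=x_kx_l$; by the above $k\neq i$ and, since $j=i$, also $l\neq i$, so the relation is of type ``quad'' (with no restriction $k\neq l$). If $k=l$ while $i\neq j$, flip to $x_k^2=x_ix_j$ and apply the same argument, again ``quad''. There remains the case $i\neq j$, $k\neq l$, where in addition $i\neq k$ and $j\neq l$. Sub-split on whether $j=k$. If $j=k$, the relation is $x_ix_j=x_jx_l$; when $l=i$ this is ``comm'', and when $l\neq i$ the indices $i,j,l$ are pairwise distinct and it is ``log''. If $j\neq k$, then the inequalities $i\neq j$, $i\neq k$, $j\neq k$, $j\neq l$, $k\neq l$ are all established and the only coincidence left possible is $i=l$: if $i=l$ the relation is $x_ix_j=x_kx_i$ with $i,j,k$ distinct, and $\{k,l\}=\{k,i\}\neq\{i,j\}$ since $k\neq j$; if $i\neq l$ all four indices are distinct. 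In either case it is ``4-diff'', the only stated requirement being $\{k,l\}\neq\{i,j\}$. This covers all possibilities.

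For the concluding remark, note that the inequalities $i\neq k$ and $j\neq l$ were the sole place involutivity was used (via $S(k,l)=(i,j)$), and without involutivity nothing prevents $S(i,j)=(i,l)$ with $l\neq j$, i.e.\ a relation $x_ix_j=x_ix_l$ (and symmetrically $x_jx_i=x_lx_i$); for instance a permutation solution with $g=\mathrm{id}$ and $f\neq\mathrm{id}$ is non-degenerate and braided but not involutive and produces relations $x_i^2=x_ix_{f(i)}$. I do not expect any genuine obstacle here: the only points needing care are using involutivity to move a square to the left-hand side of a relation, and checking that the four labelled shapes really are exhaustive once $i\neq k$ and $j\neq l$ are known.
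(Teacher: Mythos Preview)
Your proposal is correct and follows essentially the same approach as the paper: a case analysis on the shape of $S(i,j)=(k,l)$, using the fact (from Claim~\ref{cl_compl} and Remark~\ref{rem_cancel}) that a non-trivial relation must have $i\neq k$ and $j\neq l$. The paper's own proof is terser---it simply lists which configuration of $S$ produces each of the four types---whereas you carry out the exhaustiveness argument explicitly; but the content is the same.
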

\begin{proof}
(a) These $4$ kinds of rules correspond respectively to the following
four cases:\\
  $S(i,i)=(g_{i}(i),f_{i}(i))=(j,k)$,
  where $j,k \neq i$. \\
  $S(i,j)=(g_{i}(j),f_{j}(i))=(k,l)$,
  where $\{k,l\} \neq \{i,j\}$. \\
  $S(i,j)=(g_{i}(j),f_{j}(i))=(j,k)$, where $k \neq i$
    and $x_{j}$ is a fixed point of $g_{i}$. \\
  $S(i,j)=(g_{i}(j),f_{j}(i))=(j,i)$, where  $x_{j}$ is a
   fixed point of $g_{i}$ and  $x_{i}$ is a fixed point of $f_{j}$. \\
\end{proof}
\subsection{$M$ satisfies the conditions $(C_{1})$ and $(C_{2})$}

From claim \ref{complement_f_defined}, we have that there is a one-to-one correspondence  between the complement $f$ and the monoid $M$ with the same presentation as the structure group, so we will say that $M$ is coherent (by abuse of notation).\\
In order to show that the monoid $M$  satisfies the conditions
$(C_{1})$ and $(C_{2})$, we will show that $M$ is coherent (from Proposition \ref{coh_c1,2}).
Now, since $M$ is atomic, it is enough to check its coherence on $X$
(from Corollary \ref{M_atomic} and Proposition \ref{atomic_coh}).
 So, we will show that  any triple of generators $(a,b,c)$   satisfies the following equation:\\
 $( (a \setminus b )\setminus (a\setminus c))
\setminus((b \setminus a)\setminus(b \setminus
c))\equiv^{+} \epsilon$.\
 In fact, we show that  any triple of generators $(a,b,c)$ satisfies the following equation:\\
 $ (a \setminus b )\setminus (a\setminus c)=
(b \setminus a)\setminus(b \setminus c)$,\\ that is the equality is in the free monoid $X^{*}$,
 since  the range of $f$  is $X$.\\
In order to make the computations easier, we will prove the
following lemmas.
\begin{lem}\label{form_compl}
Assume $(X,S)$ is non-degenerate.\\
Let $x_{i}, x_{j}$ be different elements in $X$.\\ Then
$x_{i}\setminus x_{j}= g^{-1}_{i}(j)$.
Note that $x_{i}\setminus x_{i}= \epsilon$, where $\epsilon$ denotes the empty word.
\end{lem}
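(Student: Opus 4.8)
The plan is to unwind the definitions of the right complement and of word reversing, and to read off the value of $f$ on a pair of distinct generators directly from Claim~\ref{cl_compl}.

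First, recall from Claim~\ref{cl_compl} and Claim~\ref{complement_f_defined} that, since $(X,S)$ is non-degenerate, the complement $f$ is totally defined on $X\times X$, its range is $X$, and $M$ is exactly the monoid associated with $f$; in particular $f(x_i,x_j)$ is a single letter of $X$ for all $i,j$. By the definition of the monoid associated with a complement, the defining relations of $M$ are precisely the relations $x_i\,f(x_i,x_j)=x_j\,f(x_j,x_i)$. On the other hand, Claim~\ref{cl_compl} says that for distinct $x_i,x_j$ there is a \textbf{unique} defining relation of the form $x_i a=x_j b$ with $a,b\in X$, and it exhibits this relation as the one coming from $S(i,a)=(j,b)$ with $a=g_i^{-1}(j)$ and $b=f_a(i)$. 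Comparing the two descriptions of this unique relation forces $f(x_i,x_j)=a=g_i^{-1}(j)$ (and, as a byproduct, $f(x_j,x_i)=b=f_{g_i^{-1}(j)}(i)$).

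Second, I translate this back to the notation $x_i\setminus x_j$. By the definition of the word $u\setminus v$, when $u=x_i$ and $v=x_j$ are distinct letters the word $x_i^{-1}x_j$ reverses in a single step to $f(x_i,x_j)\,f(x_j,x_i)^{-1}$, so $x_i\setminus x_j$ is the one-letter word $f(x_i,x_j)=g_i^{-1}(j)$ (with the usual abuse of identifying the integer $g_i^{-1}(j)$ with the generator $x_{g_i^{-1}(j)}$). For $i=j$ the assertion $x_i\setminus x_i=\epsilon$ is immediate, since a complement satisfies $f(x,x)=\epsilon$ by definition, so $x_i^{-1}x_i$ reverses to $\epsilon$.

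I do not expect a genuine obstacle here; the only point requiring care is bookkeeping — making sure that the element ``$a$'' produced in the proof of Claim~\ref{cl_compl} is literally the value $f(x_i,x_j)$ of the complement function. They coincide because the relation $x_i a=x_j b$ singled out there is the unique defining relation with $x_i$ and $x_j$ at the two left ends, which is exactly the relation encoded by $f$; and one should keep straight throughout that $g_i^{-1}(j)$ is being read as a length-one word in $X^{*}$, so that the stated equality lives in the free monoid $X^{*}$ and not merely in $M$.
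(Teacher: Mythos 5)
Your proposal is correct and follows essentially the same route as the paper: both arguments identify $x_i\setminus x_j$ with the letter $a$ in the unique defining relation $x_i a = x_j b$ coming from $S(i,a)=(j,b)$ (Claim~\ref{cl_compl}), and then read off $g_i(a)=j$, i.e.\ $a=g_i^{-1}(j)$, from the definition of $S$. Your version merely makes the bookkeeping between the complement function $f$, the defining relations, and one-step word reversing more explicit; no further comment is needed.
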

\begin{proof}
If $S(i,a)=(j,b)$, then $x_{i}\setminus x_{j}=a$.
But by definition of $S$, we have that $S(i,a)=(g_{i}(a),f_{a}(i))$,
 so it holds that $g_{i}(a)=j$
 which gives $a=g_{i}^{-1}(j)$.
\end{proof}
\begin{lem}\label{formule}
Assume $(X,S)$ is non-degenerate and involutive.\\
Let $x_{i}, x_{k}$ be elements in $X$.\\ Then
$g^{-1}_{k}(i)=f_{g^{-1}_{i}(k)}(i)$
\end{lem}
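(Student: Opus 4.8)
The plan is to obtain the identity by a single change of variable, reducing it to the first half of the involutivity criterion recorded in Claim \ref{debut_form}(ii). Set $j := g_i^{-1}(k)$; this is well defined because $g_i$ is a bijection by non-degeneracy (Claim \ref{debut_form}(i)), and by construction $g_i(j)=k$. Then, directly from the defining formula $S(p,q)=(g_p(q),f_q(p))$,
\[
S(i,j)=(g_i(j),\,f_j(i))=(k,\,f_{g_i^{-1}(k)}(i)).
\]

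Next I would apply involutivity. The quickest route is to use Claim \ref{debut_form}(ii), which states $g_{g_i(j)}(f_j(i))=i$ for all indices; specializing to the chosen $j$ and using $g_i(j)=k$ gives $g_k\big(f_{g_i^{-1}(k)}(i)\big)=i$. Since $g_k$ is again a bijection, applying $g_k^{-1}$ yields precisely $f_{g_i^{-1}(k)}(i)=g_k^{-1}(i)$, which is the assertion. If one prefers not to cite Claim \ref{debut_form}(ii) directly, the same conclusion follows by applying $S$ to the pair $(k,f_{g_i^{-1}(k)}(i))=S(i,j)$, using $S^2=\mathrm{id}$ to get $S\big(k,f_{g_i^{-1}(k)}(i)\big)=(i,j)$, and then comparing first coordinates with $S(k,m)=(g_k(m),f_m(k))$ to recover $g_k\big(f_{g_i^{-1}(k)}(i)\big)=i$.

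I do not anticipate a genuine obstacle: the lemma is essentially a restatement of one half of the involutivity condition after the substitution $j=g_i^{-1}(k)$. The only points that require care are purely bookkeeping — checking that the substitution is legitimate and that the final inversion is valid — and both are guaranteed by the non-degeneracy of $(X,S)$. Note in particular that the case $i=k$ needs no separate treatment, the argument above going through verbatim.
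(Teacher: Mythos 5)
Your argument is correct and is essentially identical to the paper's proof: both substitute $j=g_i^{-1}(k)$ into the involutivity identity $g_{g_i(j)}f_j(i)=i$ from Claim \ref{debut_form}(ii) and then invert $g_k$, which is legitimate by non-degeneracy. No issues.
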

\begin{proof}
$S$ is involutive so we have from claim \ref{debut_form} that for every $x_{i}, x_{j} \in X$, $g_{g_{i}(j)}f_{j}(i)=i$.\\
Let replace in this formula  $j$  by $g^{-1}_{i}(k)$ for some $1 \leq k \leq n$, then we obtain
$i= g_{g_{i}(g^{-1}_{i}(k))}f_{g^{-1}_{i}(k)}(i)=g_{k}f_{g^{-1}_{i}(k)}(i)$\\
So, we have $g^{-1}_{k}(i)=f_{g^{-1}_{i}(k)}(i)$.
\end{proof}
\begin{prop}\label{M_c1&c2}
Assume $(X,S)$ is non-degenerate, involutive and braided. Every triple  $(x_{i},x_{k},x_{m})$ of generators satisfies the following equation: $ (x_{i} \setminusýx_{k} )\setminus (x_{i}\setminus x_{m})
=(x_{k} \setminus x_{i})\setminus(x_{k} \setminus
x_{m})$.\\
$M$ is coherent and satisfies the conditions $(C_{1})$ and $(C_{2})$.
\end{prop}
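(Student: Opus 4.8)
The plan is to verify the coherence equation $(x_{i}\setminus x_{k})\setminus(x_{i}\setminus x_{m})=(x_{k}\setminus x_{i})\setminus(x_{k}\setminus x_{m})$ directly, by expanding every complement using Lemma \ref{form_compl}, and then reducing the resulting identity to the braid relations recorded in Claim \ref{debut_form}(iii). Once this holds for all triples of generators, $M$ is coherent on $X$; since $M$ is atomic (Corollary \ref{M_atomic}), Proposition \ref{atomic_coh} upgrades this to coherence on all of $X^{*}$, and then Proposition \ref{coh_c1,2} gives $(C_{1})$ and $(C_{2})$ at once.

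First I would dispose of the degenerate cases where two of $x_{i},x_{k},x_{m}$ coincide: if $i=k$ the equation reads $\epsilon\setminus(x_i\setminus x_m)=\epsilon\setminus(x_i\setminus x_m)$, which is trivial; the cases $i=m$ and $k=m$ are similar short checks using $x\setminus x=\epsilon$ and $\epsilon\setminus u=u$. So I may assume $i,k,m$ pairwise distinct. By Lemma \ref{form_compl}, $x_{i}\setminus x_{k}=g^{-1}_{i}(k)$ and $x_{i}\setminus x_{m}=g^{-1}_{i}(m)$; hence (provided these two indices are distinct — which must itself be checked, using non-degeneracy of $g_i$) the left-hand side equals $g^{-1}_{g^{-1}_{i}(k)}\bigl(g^{-1}_{i}(m)\bigr)$, and symmetrically the right-hand side equals $g^{-1}_{g^{-1}_{k}(i)}\bigl(g^{-1}_{k}(m)\bigr)$. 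So the whole statement reduces to the index identity
\[
g^{-1}_{g^{-1}_{i}(k)}\bigl(g^{-1}_{i}(m)\bigr)=g^{-1}_{g^{-1}_{k}(i)}\bigl(g^{-1}_{k}(m)\bigr),
\]
together with the bookkeeping that the two sides of each $\setminus$ are genuinely distinct generators so that Lemma \ref{form_compl} applies (the alternative, that one of the inner complements is empty, has to be handled as a separate sub-case and should again reduce to an easy cancellation).

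To prove the displayed index identity I would start from the first braid relation of Claim \ref{debut_form}(iii), $g_{a}g_{b}=g_{g_{a}(b)}g_{f_{b}(a)}$, and rewrite it in the form $g_{g_{a}(b)}=g_{a}g_{b}g^{-1}_{f_{b}(a)}$, or better, substitute $a,b$ by suitable preimages under the $g$'s so that the composite $g^{-1}_{g^{-1}_i(k)}\circ g^{-1}_i$ appears. Concretely, applying the braid relation with the arguments chosen to make $g_a(b)=i$ and tracking indices, one gets a relation of the shape $g_{i}\,g_{g^{-1}_i(k)} = g_{k}\,g_{?}$, whence $g^{-1}_{g^{-1}_i(k)}g^{-1}_i = g^{-1}_{?}\,g^{-1}_k$ as functions; evaluating both sides at $m$ and checking that the index ``$?$'' is symmetric in $i$ and $k$ (this is exactly where Lemma \ref{formule}, $g^{-1}_k(i)=f_{g^{-1}_i(k)}(i)$, is used to identify the subscript appearing on the right) yields the claim. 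The main obstacle is precisely this last symmetry verification: massaging the braid and involutivity identities of Claim \ref{debut_form} into the exact form needed, and keeping the $f$- versus $g$-subscripts straight, is the delicate part — everything else is routine substitution and cancellation. I would therefore isolate the key identity $g_{i}\,g_{g^{-1}_i(k)}=g_{k}\,g_{g^{-1}_k(i)}$ as the heart of the argument, prove it first from Claim \ref{debut_form}(iii) (symmetrizing the first braid relation), and then deduce the coherence equation by applying $g^{-1}$ of both sides to $m$ and invoking Lemma \ref{form_compl} once more to translate back into the $\setminus$ notation. Finally, note the equality obtained lives in the free monoid $X^{*}$ since the range of $f$ is $X$, so $\equiv^{+}\epsilon$ holds on the nose, completing the proof that $M$ satisfies $(C_{1})$ and $(C_{2})$.
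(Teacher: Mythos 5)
Your proposal is correct and follows essentially the same route as the paper: expand all complements via Lemma \ref{form_compl}, reduce to the key identity $g_{i}g_{g_{i}^{-1}(k)}=g_{k}g_{g_{k}^{-1}(i)}$, derive it from the braid relation of Claim \ref{debut_form}(iii) with Lemma \ref{formule} supplying the subscript identification $f_{g_{i}^{-1}(k)}(i)=g_{k}^{-1}(i)$, and then invoke atomicity, Proposition \ref{atomic_coh} and Proposition \ref{coh_c1,2}. (Only a cosmetic slip: the substitution should make $g_{a}(b)=k$, i.e.\ $a=i$, $b=g_{i}^{-1}(k)$, not $g_{a}(b)=i$, as your displayed relation already reflects.)
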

\begin{proof}
If $x_{i}=x_{k}$ or $x_{i}=x_{m}$ or $x_{k}=x_{m}$, then the equality holds trivially. So, assume that  $(x_{i},x_{k},x_{m})$ is a triple of different generators. This implies that $g_{i}^{-1} (k) \neq g_{i}^{-1} (m)$ and $g^{-1}_{k}(i) \neq g^{-1}_{k}(m)$, since the functions $g_{i}$ are bijective.\\
So, from lemma \ref{form_compl}, we have the following formulas for all different $1 \leq i,k,m \leq n$:\\
 $ (x_{i} \setminus x_{k} )\setminus (x_{i}\setminus x_{m})=
  g^{-1}_{x_{i} \setminus x_{k}} (x_{i}\setminus x_{m})=
   g^{-1}_{g_{i}^{-1}(k)} g_{i}^{-1} (m)$\\
$ (x_{k} \setminus x_{i} )\setminus (x_{k}\setminus
x_{m})=
 g^{-1}_{x_{k} \setminus x_{i}} (x_{k}\setminus x_{m})=
 g^{-1}_{g_{k}^{-1}(i)} g_{k}^{-1} (m)$\\
So, we have to show for all different $1 \leq i,k,m \leq n$ that: \\
 $g^{-1}_{g_{i}^{-1}(k)} g_{i}^{-1} (m)=g^{-1}_{g_{k}^{-1}(i)} g_{k}^{-1} (m)$\\
 We  show  for all  $1 \leq i,k \leq n$ that  $g_{i}g_{g_{i}^{-1}(k)}= g_{k}g_{g_{k}^{-1}(i)}$ and this  will imply the above equality.\\
 $S$ is braided, so from claim \ref{debut_form}, we have that:\\
 $g_{i}g_{g_{i}^{-1}(k)}=g_{g_{i}(g_{i}^{-1}(k))}g_{f_{g_{i}^{-1}(k)}(i)}
 =$ $ g_{k}g_{f_{g_{i}^{-1}(k)}(i)}$.\\
 But, from lemma \ref{formule}, $f_{g^{-1}_{i}(k)}(i)=g^{-1}_{k}(i)$, so
 $g_{i}g_{g_{i}^{-1}(k)}= g_{k}g_{g^{-1}_{k}(i)}$.\\
 So, $M$ is coherent at $X$ but since $M$ is atomic we have that $M$ is
coherent. So, $M$  satisfies the conditions $(C_{1})$ and $(C_{2})$.
\end{proof}

\subsection{$M$ satisfies the conditions $(C_{3})$}
\begin{claim}
Assume $(X,S)$ is non-degenerate, involutive and braided.\\
There is a finite generating set which is closed under
$\setminus$, i.e $M$ satisfies the condition $(C_{3})$.
\end{claim}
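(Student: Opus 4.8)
The goal is to exhibit a finite generating set $P$ of $M$ that is closed under right complement, i.e. $x\setminus y\in P$ whenever $x,y\in P$. The natural candidate is $P=X$ itself, the set of atoms. From Claim~\ref{cl_compl} we already know that for any two generators $x_i,x_j$ the complement $x_i\setminus x_j$ exists and lies in $X$ (it equals $g_i^{-1}(j)$ by Lemma~\ref{form_compl}, or the empty word $\epsilon$ when $i=j$). So on letters, $X$ is closed under $\setminus$ up to the harmless appearance of $\epsilon$. The only genuine issue is that $C_3$ as stated in Definition~\ref{def_conditions} asks for closure of the generating set under $\setminus$, and one must be careful that the empty word does not spoil this: the standard fix is to note that $\epsilon$ is the identity and that a generating set closed under $\setminus$ modulo $\epsilon$ suffices for the Garside criterion, or equivalently to work with the partial complement on $X$ in the sense of the word-reversing setup of \cite{deh_francais}.

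Concretely, the plan is as follows. First I would invoke Claim~\ref{complement_f_defined}: the complement $f$ is totally defined on $X\times X$, its range is contained in $X$, and the monoid associated with $f$ is exactly $M$. Second, I would combine this with the coherence of $f$ established in Proposition~\ref{M_c1\&c2}: since $M$ is atomic and $f$ is coherent on $X$, word reversing is complete, and hence for any two words $u,v\in X^{*}$ the word $u\setminus v$ is well-defined. In particular, taking $u,v$ to be single generators, $x_i\setminus x_j\in X\cup\{\epsilon\}$. Third, I would conclude that $X$ (together with $\epsilon$, which we may adjoin without changing that it is finite) is a finite generating set closed under $\setminus$, so $M$ satisfies $(C_3)$.

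The step I expect to need the most care is the bookkeeping around $\epsilon$ and the precise formulation of $(C_3)$: one must check that the version of the Garside criterion being used (Theorem~\ref{gars_critere}, from \cite{deh_francais}) genuinely allows the generating set $P$ to be closed under $\setminus$ only ``up to $\epsilon$'', which is automatic because $x\setminus x=\epsilon$ is forced for \emph{every} complement and the theorem is stated with this in mind. Everything else is immediate from results already proved in the excerpt: the existence and range of $f$ from Claim~\ref{cl_compl} and Lemma~\ref{form_compl}, and the completeness of reversing from atomicity (Claim~\ref{complement_f_defined}) together with coherence (Proposition~\ref{M_c1\&c2}). So the proof is essentially a one-line assembly: $P=X$ works, with $x_i\setminus x_j=g_i^{-1}(j)\in X$ for $i\neq j$ and $x_i\setminus x_i=\epsilon$.
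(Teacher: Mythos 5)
Your proof is correct and follows essentially the same route as the paper: the paper also takes the generating set to be $X\cup\{\epsilon\}$, uses Claim~\ref{cl_compl} to produce the unique relation $x_{i}a=x_{j}b$ (a common right multiple of $x_{i}$ and $x_{j}$), and then invokes condition $(C_{2})$ from Proposition~\ref{M_c1&c2} to conclude that $x_{i}\vee x_{j}$ is represented by the length-two word $x_{i}a$, so that $x_{i}\setminus x_{j}=a\in X$. The only difference is cosmetic: you bridge from the letter-level complement $f$ to the monoid-level complement via coherence and completeness of word reversing, whereas the paper argues directly that $x_{i}a$ is a common multiple of least length; and the $\epsilon$ bookkeeping you flag is handled in the paper exactly as you suggest, by adjoining $\epsilon$ to $X$.
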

\begin{proof}
 From
claim \ref{cl_compl}, for any pair of generators $x_{i}, x_{j} $ there are unique $a,b \in X$ such that  $x_{i} a= x_{j} b$, that is   any pair of generators $x_{i}, x_{j} $ has
a right common multiple. Since from claim
\ref{M_c1&c2}, $M$ satisfies the condition $(C_{2})$,  we have
that $x_{i}$ and $x_{j}$ have a right lcm and the word $x_{i} a$ (or
$ x_{j} b$) represents the element $x_{i} \vee x_{j}$, since this is a common multiple of $x_{i}$ and
$x_{j}$ of least length. So,
 it holds that $x_{i}  \setminus x_{j}=a$ and $ x_{j}
\setminus x_{i} = b$, where $a,b \in X$. So,  $X \cup \{\epsilon\}$ is
closed under $\setminus$.
\end{proof}
\subsection{$M$ is right cancellative, i.e  $M$  satisfies $(\tilde{C_{1}})$}
The following lemma is useful for the proof of Claim \ref{M_right_cancel} and also for the calculations in Section 8.
\begin{lem}
\label{lem_cancel}
Assume  $(X,S)$ is non-degenerate and involutive.
Let $a$ and $a'$ be different elements in $X$ (i.e generators of $M$). Then
there is exactly one defining relation $x_{i} a= x_{j} a'$, where
$x_{i}, x_{j}$ are in $X$ and are different.
\end{lem}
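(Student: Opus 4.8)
The plan is to prove this lemma as the exact mirror image of Claim \ref{cl_compl}. In that claim we fixed the two first letters $x_i,x_j$ of a defining relation and produced unique second letters, using the bijectivity of the maps $g_\bullet$; here I would instead fix the two second letters $a,a'$ and produce unique first letters, using the bijectivity of the maps $f_\bullet$. Throughout I would use that a word equality $x_px_q=x_rx_s$ is a defining relation of $M$ exactly when $S(p,q)=(r,s)$, and that by involutivity this is equivalent to $S(r,s)=(p,q)$, so that reading a relation ``backwards'' yields nothing new; in particular a relation $x_ia=x_ja'$ is the same datum as the identity $S(x_i,a)=(x_j,a')$, i.e.\ $g_{x_i}(a)=x_j$ and $f_a(x_i)=a'$.

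For existence I would argue: since $(X,S)$ is non-degenerate, $f_a\colon X\to X$ is a bijection, so there is a unique $x_i\in X$ with $f_a(x_i)=a'$; setting $x_j:=g_{x_i}(a)$ gives $S(x_i,a)=(g_{x_i}(a),f_a(x_i))=(x_j,a')$, so $x_ia=x_ja'$ is a defining relation. For uniqueness: in any defining relation of the form $x_ia=x_ja'$ the identity $S(x_i,a)=(x_j,a')$ forces $f_a(x_i)=a'$, hence $x_i$ is pinned down by the injectivity of $f_a$, and then $x_j=g_{x_i}(a)$ is determined as well.

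The only point that needs genuine care --- the analogue of the ``$a\neq b$'' clause of Claim \ref{cl_compl} --- is showing $x_i\neq x_j$, and this is precisely where involutivity is used. Assuming $x_i=x_j$, we would have $g_{x_i}(a)=x_i$, so $S(x_i,a)=(x_i,a')$; applying involutivity gives $S(x_i,a')=(x_i,a)$, whence $g_{x_i}(a')=x_i=g_{x_i}(a)$, and the injectivity of $g_{x_i}$ then forces $a=a'$, contrary to hypothesis. Thus $x_i\neq x_j$. I do not expect any real obstacle here: the argument is a line-by-line transpose of Claim \ref{cl_compl}, the two things to watch being that a relation should not be double-counted by reading it in both directions (handled by the involutivity equivalence above) and that the ``different'' conclusion genuinely needs involutivity --- for a merely non-degenerate solution it fails, consistently with the note in Corollary \ref{M_atomic} that relations $x_jx_i=x_kx_i$ occur in the non-involutive case. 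An alternative route would be to pass to the mirror solution $\hat S=\alpha S\alpha$, whose defining relations are the reverses of those of $M$ and which is again non-degenerate, involutive and braided, and then simply invoke Claim \ref{cl_compl}; I would probably keep the direct computation since it is self-contained.
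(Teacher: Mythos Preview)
Your proof is correct and follows essentially the same approach as the paper's own proof: you construct $x_i=f_a^{-1}(a')$ and $x_j=g_{x_i}(a)$, derive uniqueness from the injectivity of $f_a$, and rule out $x_i=x_j$ via involutivity and the injectivity of $g_{x_i}$, exactly as the paper does. Your additional remarks about not double-counting a relation read in reverse and the alternative mirror-solution route are sound but go beyond what the paper records.
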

\begin{proof}
We have to show that for each pair $a,a'$ of generators ,
 there are two generators $x_{i}$ and $x_{j}$ such that we have
$S(i,a)=(j,a')$.
We recall that $S$ is non-degenerate which implies that the functions  $f_{a}$
are bijective for every $a \in X$.
So, let take  $x_{i}$ to be such that $i=f_{a}^{-1}(a')$, $x_{i}$ exists
 and is unique from the injectivity of $f_{a}$.
Next, take $x_{j}$ to be such that $j=g_{i}(a)$.\\
So, it holds that $S(i,a)=(g_{i}(a),f_{a}(i))=(j,a')$.
Assume that $i=j$, then we  have  $S(i,a)=(g_{i}(a),f_{a}(i))=(i,a')$. Since $S$ is involutive, we have  also $S(i,a')=(g_{i}(a'),f_{a'}(i))=(i,a)$, that is $g_{i}(a)=g_{i}(a')=i$.
But this contradicts the injectivity of $g_{i}$, so $i \neq j$.
The uniqueness of each such relation is due to the fact that $i$ and $j$ are defined uniquely by the functions $f_{.}$ and $g_{.}$.
\end{proof}
\begin{claim}\label{M_right_cancel}
The monoid $M$  satisfies the condition  $(\tilde{C_{1}})$, that is   $M$ is right cancellative.
\end{claim}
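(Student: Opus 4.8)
The plan is to sidestep any direct left‑reversing argument by recognising the opposite monoid $M^{\mathrm{op}}$ as the structure monoid of \emph{another} non‑degenerate, involutive and braided solution, and then invoking the left cancellativity already established in Proposition~\ref{M_c1&c2}. Recall that $M^{\mathrm{op}}$ is presented by the defining words of $M$ read backwards, so its relations are $yx=f_{y}(x)\,g_{x}(y)$ for $(x,y)\in X\times X$, while those of $M$ are $xy=g_{x}(y)\,f_{y}(x)$.

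First I would set $S':=\alpha\circ S\circ\alpha$, that is $S'(x,y)=(f_{x}(y),\,g_{y}(x))$, and check that $(X,S')$ is again non‑degenerate, involutive and braided (with the same finite set $X$). Involutivity is immediate, $(S')^{2}=\alpha S^{2}\alpha=\mathrm{id}$. Non‑degeneracy is immediate too, since $x\mapsto S'_{2}(x,y)=g_{y}(x)$ and $x\mapsto S'_{1}(z,x)=f_{z}(x)$ are bijections because $(X,S)$ is non‑degenerate (Claim~\ref{debut_form}(i)). For the braid condition I would work at the level of the $R$‑matrix: writing $R=\alpha\circ S$, one has $\alpha\circ S'=S\circ\alpha=R^{21}$, and the unitary condition \cite[Prop.~1.2]{etingof} --- valid since a braided involutive set is symmetric --- gives $R^{21}R=1$, so $\alpha\circ S'=R^{-1}$; inverting the quantum Yang--Baxter equation for $R$ yields the same equation for $R^{-1}$, hence $(X,S')$ is braided. (A routine alternative is to verify the three identities of Claim~\ref{debut_form}(iii) for $g'_{x}=f_{x}$, $f'_{x}=g_{x}$ directly from those for $f,g$; this is the one spot where a short computation cannot be avoided.)

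Next, comparing defining relations shows that the structure monoid $M(S')=\operatorname{Mon}\langle X\mid xy=f_{x}(y)\,g_{y}(x)\rangle$ coincides with $M^{\mathrm{op}}$: the reindexing $(x,y)\mapsto(y,x)$ identifies the relation set $\{\,yx=f_{y}(x)g_{x}(y)\,\}$ of $M^{\mathrm{op}}$ with the relation set $\{\,xy=f_{x}(y)g_{y}(x)\,\}$ of $M(S')$. Applying Proposition~\ref{M_c1&c2} to the solution $(X,S')$ then gives that $M(S')=M^{\mathrm{op}}$ is left cancellative, which is precisely the assertion that $M$ is right cancellative; hence $M$ satisfies $(\tilde{C_{1}})$. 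The only genuinely non‑formal step is the braid‑invariance of $S\mapsto S'$ discussed above; the rest is bookkeeping. (I note that a self‑contained alternative, presumably the one carried out in the appendix, is to dualise the coherence and left‑cancellativity arguments given above to the \emph{left} complement of $M$; passing through $M^{\mathrm{op}}$ simply reuses those dual facts at no cost.)
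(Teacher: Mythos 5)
Your proof is correct, but it takes a genuinely different route from the one in the paper. The paper proves right cancellativity in the appendix by a direct word-combinatorial argument, a rewriting of Garside's classical cancellation proof: it uses Lemma~\ref{lem_cancel} (for distinct $a,a'$ there is exactly one relation $x_ia=x_ja'$, with $x_i\neq x_j$) and a double induction on the length of the words and on the chain-length of the transformation turning $w_1a$ into $w_2a$, inserting an intermediate word $wa'$ and splitting into the cases $a=a'$ and $a\neq a'$. You instead exploit the symmetry $S\mapsto S'=\alpha\circ S\circ\alpha$: the checks that $(X,S')$ is non-degenerate and involutive are as immediate as you say, the braidedness of $S'$ does follow either from your $R$-matrix argument (using unitarity, valid here because the solution is involutive, so $\alpha\circ S'=R^{21}=R^{-1}$ and QYBE is stable under inversion) or from relabelling the three identities of Claim~\ref{debut_form}(iii) with $g'_x=f_x$, $f'_x=g_x$, and the identification of the structure monoid of $(X,S')$ with $M^{\mathrm{op}}$ is exactly the reversal of the defining words. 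Since Proposition~\ref{M_c1&c2} applies verbatim to any non-degenerate, involutive, braided solution, left cancellativity of $M(S')=M^{\mathrm{op}}$ indeed yields $(\tilde{C_1})$ for $M$. What each approach buys: yours is shorter and conceptually cleaner, reusing the coherence machinery already in place and making the left/right duality of the solution explicit (in the spirit of the left complement used later in Section~8); the paper's appendix proof is more elementary and self-contained, needing only Lemma~\ref{lem_cancel} and no verification that the transposed map is again a braided solution, and it keeps the cancellativity argument independent of the coherence results.
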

The proof appears in the appendix.
\section{The right lcm of the generators is a Garside element}
The braid groups and the Artin groups of finite type are Garside groups which satisfy the condition that the right lcm of their set of atoms is a Garside element. In \cite{deh_Paris}, the authors considered this additional condition as a part of the definition of Garside groups and in \cite{deh_francais} it has been removed from the definition.
Indeed, in \cite{deh_francais} Dehornoy gives the following example of a monoid which is Garside and yet the right lcm of its atoms is not a Garside element: Let $M=\operatorname{Mon} \langle a,b \mid aba=b^{2} \rangle$, then $b^{3}$ represents a Garside element of $M$ and the right lcm of the atoms is $b^{2}$.
We prove that the structure group of a non-degenerate, braided and involutive solution is a Garside group in the sense of \cite{deh_Paris}, that is we prove the following result:
\begin{thm}\label{thm_delta_lcm_atoms}
Let $G$ be the structure group of a non-degenerate, braided and involutive solution $(X,S)$ and let  $M$ be the monoid with the same presentation.
 Then the right lcm of the atoms (i.e the elements of $X$) is a Garside element.
\end{thm}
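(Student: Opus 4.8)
The plan is to show that the right lcm $\Delta$ of the atoms $x_1,\dots,x_n$ exists, that its left divisors coincide with its right divisors, that there are finitely many of them, and that they generate $M$; this is exactly the definition of a Garside element. The existence of $\Delta$ is essentially already in hand: by Claim~\ref{cl_compl} any two generators have a right common multiple, hence by $(C_2)$ (Proposition~\ref{M_c1&c2}) a right lcm, and since $M$ is atomic one builds $\Delta = x_1 \vee x_2 \vee \cdots \vee x_n$ by iterating pairwise right lcms, the argument terminating because the norm is strictly increasing and bounded by the number of generators one has joined. Since $M$ is already known to be a Garside monoid (Theorem~\ref{theo:garside}), it carries \emph{some} Garside element $\Delta_0$, whose divisors form a finite lattice; the simples. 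The key point to isolate is that $\Delta$, the right lcm of the atoms, is itself that canonical Garside element — equivalently, that every simple divides $\Delta$ on the left, and that left and right divisibility by $\Delta$ agree.

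The main structural input I would exploit is the pair of bijections coming from non-degeneracy and involutivity. First I would record the $T$-map of Proposition~\ref{Prop_def_T}: the assignment $x_i \mapsto x_{T(i)}$ (extended to an anti/automorphism of $M$) converts left divisibility into right divisibility, because the defining relations $x_i x_j = x_k x_l$ are, up to applying $T$ to the letters, symmetric under reversal — this is the same phenomenon that makes monoids of I-type have a ``reversed'' presentation of the same shape. Concretely I would show $\Delta x_i = x_{\tau(i)} \Delta$ for a permutation $\tau$ of the index set (so $\Delta$ is a ``Garside-type'' quasi-central element), by inducting on the construction of $\Delta$ as an iterated lcm and using Lemma~\ref{form_compl} together with the braid identities of Claim~\ref{debut_form} to track how the complement of a letter on a partial lcm behaves. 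Once $\Delta x_i = x_{\tau(i)}\Delta$ is known for all $i$, it follows immediately that every word in the generators left-divides a suitable power of $\Delta$, and, combined with the fact that the right divisors of $\Delta$ are closed under right complement, that the left divisors of $\Delta$ equal its right divisors. Finiteness of the divisor set is automatic from atomicity once we know $\Delta$ has finite norm.

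In more detail, the steps in order: (1) build $\Delta = x_1 \vee \cdots \vee x_n$ and note $\|\Delta\| \le n$ using the length-preserving relations; (2) show $x_i$ left-divides $\Delta$ for every $i$ (immediate from the definition of lcm) and, dually, that $x_i$ right-divides $\Delta$ — here I would use Lemma~\ref{lem_cancel} to see that the ``last letters'' of the various words representing $\Delta$ already run over all of $X$; (3) prove the commutation relation $\Delta x_i = x_{\tau(i)} \Delta$ for a bijection $\tau:X\to X$, the heart of the argument, by analysing how adjoining the letter $x_i$ on the right of a partial right-lcm interacts with the complement, invoking the braid relations in the form used in Proposition~\ref{M_c1&c2}; (4) deduce that the set $D$ of left divisors of $\Delta$ is closed under right complement (if $a,b \in D$ then $a\setminus b \in D$, since $a \vee b$ still divides $\Delta$) and hence is a finite sub-lcm-lattice containing $X$, so $D$ is the set of simples and generates $M$; (5) conclude from $\Delta x_i = x_{\tau(i)}\Delta$ that $D$ is stable under the reversal/$T$ symmetry, so left divisors of $\Delta$ = right divisors of $\Delta$; therefore $\Delta$ is a Garside element, and $G = \operatorname{Gp}\langle X \mid R\rangle$ is Garside in the sense of \cite{deh_Paris}.

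I expect step (3), the quasi-centrality relation $\Delta x_i = x_{\tau(i)}\Delta$, to be the main obstacle: it requires a clean bookkeeping of iterated complements $u \setminus x_i$ as $u$ grows through the partial lcms $x_1 \vee \cdots \vee x_j$, and the only tools available are the pointwise formulas $x_i\setminus x_j = g_i^{-1}(j)$ (Lemma~\ref{form_compl}), the involutivity identity of Lemma~\ref{formule}, and the three braid identities of Claim~\ref{debut_form}(iii). A cleaner route to the same end, which I would pursue in parallel, is to avoid the explicit permutation $\tau$ and instead argue abstractly: $M$ is Gaussian with a finite set of simples (Theorem~\ref{theo:garside}), the right lcm $\hat\Delta$ of \emph{all} simples is the canonical Garside element, and one shows $\Delta = \hat\Delta$ by proving that every simple is a left divisor of $\Delta$ — which reduces, by induction on norm and closure of the simples under $\setminus$, to the statement that each generator left-divides $\Delta$ and that $D$ (the divisors of $\Delta$) is $\setminus$-closed, i.e.\ steps (2) and (4) above — thereby sidestepping (3) entirely if the $T$-symmetry of the divisor lattice can be gotten more cheaply from Proposition~\ref{Prop_def_T} directly.
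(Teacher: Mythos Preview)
Your alternative ``cleaner route'' is essentially the paper's argument, and it is the right one; your main route through the quasi-centrality relation $\Delta x_i = x_{\tau(i)}\Delta$ is unnecessary. Once you know $\Delta$ equals the right lcm $\hat\Delta$ of all simples, you are done: $\hat\Delta$ is already a Garside element, so left and right divisors coincide automatically and no $T$-symmetry or step~(3) is needed.

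There is, however, a genuine gap in your step~(4). You claim that the set $D$ of left divisors of $\Delta$ is closed under right complement because ``if $a,b\in D$ then $a\vee b$ still divides $\Delta$''. But $a\vee b \mid \Delta$ only gives $\Delta = a\,(a\setminus b)\,c$ for some $c$; this exhibits $a\setminus b$ as a \emph{middle} factor, not as a left divisor of $\Delta$. So the implication $a,b\in D \Rightarrow a\setminus b\in D$ is not justified, and without it your induction showing $\chi\subseteq D$ does not get off the ground.

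The paper repairs this by reversing the direction of the containment argument: rather than showing the left divisors of $\Delta$ are $\setminus$-closed, it shows directly that every simple lies in $\overline{X}^{\vee}\cup\{\epsilon\}$, the closure of $X$ under right lcm alone. The key observation you are missing is the trivial but decisive fact
\[
M\setminus X \;\subseteq\; X\cup\{\epsilon\},
\]
which follows immediately from word reversing since every defining relation has both sides of length~$2$ (so each reversing step replaces a letter by a single letter or by $\epsilon$). Combining this with the distributivity formula $u\setminus(v\vee w)=(u\setminus v)\vee(u\setminus w)$ (Lemma~\ref{formules_complement_lcm}(i)) gives $M\setminus\overline{X}^{\vee}\subseteq\overline{X}^{\vee}\cup\{\epsilon\}$, and then a straightforward induction on the construction of $\chi$ shows $\chi=\overline{X}^{\vee}\cup\{\epsilon\}$. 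Hence $\hat\Delta=\bigvee\chi=\bigvee X=\Delta$, and $\Delta$ is a Garside element. This bypasses both your step~(3) and the faulty closure claim in step~(4).
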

In order to prove that, we show that the set of simple elements $\chi$, i.e the closure of $X$ under right complement and right lcm, is equal to the closure of $X$ under right  lcm (denoted by $\overline{X}^{\vee}$), where the empty word $\epsilon$ is added. So, this implies that $\Delta$, the right lcm of the simple elements, is the right lcm of  the elements in $X$.
We use the word reversing method developed by Dehornoy and the diagrams for word reversing. We illustrate in  example $1$ below the definition of the diagram and we refer the reader to \cite{deh_francais} and  \cite{deh_livre} for more details. The following proposition ensures in our case that reversing the word $u^{-1}v$ using the diagram amounts to computing a right lcm for the elements represented by $u$ and $v$.
\begin{prop}\cite[p.65]{deh_livre}
Assume that $M$ is a monoid associated with a coherent complement. Let $u,v$ be words on $X$ and let $g,g'$ respectively be their classes in $M$. If the word $u \setminus v$ exists and the elements $g$ and $g'$ admit a common right multiple, then the elements $g$ and $g'$  admit in $M$ a unique right lcm and $u\setminus v$ represents the element $g \setminus g'$.
\end{prop}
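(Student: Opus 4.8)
The plan is to split the statement into a soundness part, showing that $u\setminus v$ represents \emph{some} common right multiple complement, and a completeness part, showing that this complement is the \emph{least} one; the hypothesis that $f$ is coherent is used throughout via Proposition \ref{coh_c1,2}, which guarantees that $M$ is left cancellative (condition $C_{1}$) and satisfies $C_{2}$.

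First I would record soundness. That $u\setminus v$ exists means, by definition, that $u^{-1}v$ reverses to a positive--negative word $v_{1}u_{1}^{-1}$ with $v_{1}=u\setminus v$ and, symmetrically, $u_{1}=v\setminus u$. Reading off the associated reversing diagram, the completed grid is tiled by elementary squares, each an instance of a defining relation $xf(x,y)=yf(y,x)$ of $M$; the boundary of the grid then gives the positive-word equality $uv_{1}\equiv^{+}vu_{1}$. Passing to classes in $M$ this reads $g\,\overline{v_{1}}=g'\,\overline{u_{1}}$, so $m:=g\,\overline{v_{1}}$ is a common right multiple of $g$ and $g'$. Since by hypothesis $g$ and $g'$ already admit a common right multiple and $M$ satisfies $C_{2}$, their right lcm $g\vee g'$ exists; write $g\vee g'=g\,(g\setminus g')$.

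Because $m$ is a common right multiple of $g$ and $g'$, the defining property of the lcm makes $g\vee g'$ a left divisor of $m$, say $m=(g\vee g')\,t=g\,(g\setminus g')\,t$; cancelling $g$ on the left by $C_{1}$ gives $\overline{v_{1}}=(g\setminus g')\,t$, so $g\setminus g'$ left-divides $\overline{v_{1}}=u\setminus v$. It remains to prove the reverse divisibility, equivalently $t=1$, i.e.\ that the common multiple $m$ produced by reversing is \emph{equal} to $g\vee g'$. This is the crux, and it is exactly the completeness of word reversing for coherent complements. I would establish, by induction on the combined length of words, the minimality statement: whenever $g\,\overline a=g'\,\overline b$ is \emph{any} common right multiple (with $a,b$ positive words), $\overline{v_{1}}$ left-divides $\overline a$. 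After peeling off one letter of $u$ and one of $v$, the base case reduces to the coherence identity $((u\setminus v)\setminus(u\setminus w))\setminus((v\setminus u)\setminus(v\setminus w))\equiv^{+}\epsilon$ at a triple, which is available on all of $X^{*}$ by Proposition \ref{atomic_coh} since $M$ is atomic and coherent on $X$; the inductive step propagates this local confluence along the reversing grid in a Newman-type argument, each reversing move shrinking the configuration so that the inductive hypothesis and coherence force the two completions of the grid to agree in $M$. Applying the minimality statement to the lcm datum $a=g\setminus g'$, $b=g'\setminus g$ (for which $g\,\overline a=g\vee g'=g'\,\overline b$) shows $\overline{v_{1}}$ left-divides $g\setminus g'$; together with the reverse divisibility already obtained and antisymmetry of left-divisibility (valid since $1$ is the only invertible element of $M$), we conclude $\overline{v_{1}}=g\setminus g'$. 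Hence $u\setminus v$ represents $g\setminus g'$, and simultaneously $m=g\vee g'$ is the genuine right lcm.

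The main obstacle is this completeness step: soundness and the existence of the lcm are formal consequences of $C_{1}$ and $C_{2}$, but proving that reversing yields the \emph{least} common multiple requires the full force of coherence pushed from $X$ out to $X^{*}$, together with the bookkeeping of the reversing grid in the confluence induction.
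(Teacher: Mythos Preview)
The paper does not supply a proof of this proposition: it is quoted verbatim from \cite[p.65]{deh_livre} as a black box used in Section~5, so there is nothing in-paper to compare against. Your outline is the standard Dehornoy argument and is correct in shape: soundness (the reversing grid is a tiling by defining relations, so $u\,(u\setminus v)\equiv^{+}v\,(v\setminus u)$), existence of the lcm via $C_{2}$, and then completeness showing the reversing output is minimal.

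Two remarks on the details. First, your appeal to Proposition~\ref{atomic_coh} and atomicity is unnecessary here: the hypothesis is that the complement is \emph{coherent}, which by definition already means coherent on all of $X^{*}$; you do not need to lift from $X$. Second, the completeness step, which you correctly flag as the crux, is only gestured at. The precise lemma one proves is: if $f$ is coherent and $u,v,a,b$ are positive words with $ua\equiv^{+}vb$, then $u\setminus v$ exists and $u\setminus v$ left-divides $a$ in $M$. The induction is on the length of a derivation witnessing $ua\equiv^{+}vb$ (not on $|u|+|v|$ as you suggest), and the inductive step uses the cube condition at a single triple to reconcile two reversing completions differing by one rewrite. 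Your ``peeling off one letter of $u$ and one of $v$'' does not quite capture this; the right picture is that a single application of a relation inside $ua$ changes the reversing grid in a controlled way, and coherence forces the two resulting outputs to agree up to $\equiv^{+}$. Once that lemma is in hand, your final paragraph (apply it with $a=g\setminus g'$, combine with the divisibility from the lcm property, conclude by antisymmetry) is exactly right.
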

\textbf{Example 1 cont'}: Let us consider the monoid $M$ defined in example $1$ in Section $3$.\\
 (a) In order to reverse the word $x_{3}^{-1}x_{1}$, we begin with:$\begin{matrix}
    && \longrightarrow^{x_{1}} &    \\
 & \downarrow^{ x_{3}} &   \\
\end{matrix}$\\
The diagram corresponding to the reversing of the word  $x_{3}^{-1}x_{1}$, or in other words to the generators $x_{3}$ and $x_{1}$ is defined to be:
$\begin{matrix}
    &&
    \longrightarrow^{x_{1}} &    \\
 & \downarrow^{ x_{3}} &  & \downarrow^{x_{2}} \\
   &  & \longrightarrow ^{ x_{4}} &  &  \\
\end{matrix}$\\
Since it holds that $x_{1}x_{2}=x_{3}x_{4}$ in $M$,  $x_{1}\setminus x_{3}=x_{2}$ and $x_{3}\setminus x_{1}=x_{4}$.\\
(b) In order to reverse the word $x_{4}^{-2}x_{1}^{2}$ , we begin with:
$\begin{matrix}
     &&\longrightarrow^{x_{1}}  \longrightarrow^{x_{1}}   \\
 & \downarrow^{ x_{4}}  \\
 & \downarrow^{ x_{4}}  \\
\end{matrix}$\\
The diagram corresponding to the reversing of the word $x_{4}^{-2}x_{1}^{2}$ or to the words  $x_{4}^{2}$ and $x_{1}^{2}$ is defined to be:
$\begin{matrix}
    \longrightarrow^{x_{1}} && \longrightarrow^{x_{1}}   \\
 & \downarrow^{ x_{4}} &&\downarrow^{x_{3}}  && \downarrow^{x_{2}} \\
   &  & \longrightarrow ^{ x_{2}} &  &\longrightarrow ^{ x_{4}}  \\
  & \downarrow^{ x_{4}} &&\downarrow^{x_{1}}  && \downarrow^{x_{2}} \\
   &  & \longrightarrow ^{ x_{3}} &  &\longrightarrow ^{ x_{3}}  \\
\end{matrix}$\\
That is, it holds that $x_{1}^{2}x_{2}^{2}=x_{4}^{2}x_{3}^{2}$ in $M$  and since  $x_{1}^{2}=x_{2}^{2}$ and $x_{3}^{2}=x_{4}^{2}$, it holds that  $x_{1}^{4}=x_{2}^{4}=x_{3}^{4}=x_{4}^{4}$ in $M$.
So, a word representing the right lcm of $x_{1}^{2}$ and $x_{4}^{2}$ is the word $x_{1}^{4}$ or the word $x_{4}^{4}$...\\
We obtain from the diagram that the word $x_{2}^{2}$ represents the element $x_{1}^{2}\setminus x_{4}^{2}$ and the word $x_{3}^{2}$ represents the element  $x_{4}^{2}\setminus x_{1}^{2}$.\\
In order to prove that  $\chi=\overline{X}^{\vee}\bigcup \{\epsilon\}$, we need to show that every complement of simple elements is the right lcm of some generators. The following lemma from \cite{deh_francais} gives some rules of calculation on the complements.
\begin{lem}\cite[Lemma1.7]{deh_francais}\label{formules_complement_lcm}
Let $M$ be a monoid which satisfies the conditions $(C_{0})$, $(C_{1})$ and $(C_{2})$. Then for every $u,v,w \in M$, it holds that:\\
(i)  $u \setminus (v \vee w) = (u \setminus v) \vee (u \setminus w)$\\
(ii)  $(u  \vee v )\setminus w = (u \setminus v) \setminus (u \setminus w)$\\
(iii)  $u(v \vee w)=uv \vee uw$
\end{lem}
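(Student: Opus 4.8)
The plan is to derive all three identities from the single structural fact (iii), which I would establish first, together with the defining equations $u\vee v=u(u\setminus v)=v(v\setminus u)$, left cancellativity $(C_{1})$, the clause of $(C_{2})$ guaranteeing a right lcm for any pair with a common right multiple, and the remark that $(C_{0})$ makes left divisibility antisymmetric (if $a=bc$ and $b=ad$ then $a=acd$, so $cd=1$ by cancellativity, so $c=1$ by $(C_{0})$, whence $a=b$). Throughout, each identity is to be read as asserting that its two sides are defined simultaneously and then coincide; the only genuinely careful point is tracking this existence. I would also record once that $\vee$ is associative and commutative on its domain, since an element is a common right multiple of $u$ and $v\vee w$ exactly when it is one of $u$, $v$, $w$ --- so $u\vee(v\vee w)$, $(u\vee v)\vee w$ and $(u\vee v)\vee(u\vee w)$ all name the same element $u\vee v\vee w$, which exists iff $u,v,w$ have a common right multiple.

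For (iii): if $v\vee w$ exists, say $v\vee w=vs=wt$, then $u(v\vee w)=(uv)s=(uw)t$ is a common right multiple of $uv$ and $uw$, so $uv\vee uw$ exists; conversely from $uv\vee uw=(uv)s'=(uw)t'$ cancellativity gives $vs'=wt'$, so $v\vee w$ exists. When both exist, $u(v\vee w)$ is a common right multiple of $uv,uw$, hence $uv\vee uw$ left-divides it; and writing $uv\vee uw=(uv)a=(uw)b=u(va)=u(wb)$ and cancelling $u$ gives $va=wb$, a common right multiple of $v,w$, so $v\vee w$ left-divides $va$, say $va=(v\vee w)c$, whence $uv\vee uw=u(v\vee w)c$ and $u(v\vee w)$ left-divides $uv\vee uw$. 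Antisymmetry gives $u(v\vee w)=uv\vee uw$.

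Identity (i) then follows by applying (iii) to $u(u\setminus v)$ and $u(u\setminus w)$:
\[
u\bigl((u\setminus v)\vee(u\setminus w)\bigr)=u(u\setminus v)\vee u(u\setminus w)=(u\vee v)\vee(u\vee w)=u\vee v\vee w=u\vee(v\vee w)=u\bigl(u\setminus(v\vee w)\bigr),
\]
and left-cancelling $u$ yields (i) (the existence statements matching along the same chain). For (ii) I would expand $u\vee v\vee w$ in two ways: on one hand $u\vee v\vee w=(u\vee v)\vee w=(u\vee v)\bigl((u\vee v)\setminus w\bigr)=u(u\setminus v)\bigl((u\vee v)\setminus w\bigr)$; on the other, using (i),
\[
u\vee v\vee w=u\vee(v\vee w)=u\bigl(u\setminus(v\vee w)\bigr)=u\bigl((u\setminus v)\vee(u\setminus w)\bigr)=u(u\setminus v)\bigl((u\setminus v)\setminus(u\setminus w)\bigr);
\]
left-cancelling $u(u\setminus v)$ gives $(u\vee v)\setminus w=(u\setminus v)\setminus(u\setminus w)$.

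The computations themselves are short; the only real obstacle is the bookkeeping of the partial operation $\vee$ --- checking in each case that the left-hand side is defined exactly when the right-hand side is. Proving (iii) first with its explicit existence clause, and then threading existence through the displayed chains for (i) and (ii), is what keeps this under control.
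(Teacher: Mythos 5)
The paper itself offers no proof of this lemma: it is imported verbatim from Dehornoy \cite[Lemma 1.7]{deh_francais}, so there is no in-paper argument to compare against; what you have supplied is the missing (outsourced) proof. Your argument is correct and self-contained, and it follows the standard route: prove (iii) first together with its existence clause, get (i) by left-cancelling $u$ in $u\bigl((u\setminus v)\vee(u\setminus w)\bigr)=u\bigl(u\setminus(v\vee w)\bigr)$, and get (ii) by expanding $u\vee v\vee w$ as $(u\vee v)\vee w$ and as $u\vee(v\vee w)$ and cancelling the common left factor $u(u\setminus v)$; your insistence on tracking when the partial operation $\vee$ is defined is exactly what the conditional reading of the identities under $(C_{0})$--$(C_{2})$ requires, and the preliminary observation that common right multiples of $u$ and $v\vee w$ are the same as common right multiples of $u,v,w$ justifies all the regroupings $u\vee v\vee w$ you use. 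One micro-step should be spelled out: in the antisymmetry argument you pass from $cd=1$ to $c=1$ ``by $(C_{0})$'', but $(C_{0})$ speaks of invertible elements, so you need the one-line remark that $cd=1$ and left cancellation (from $c(dc)=(cd)c=c\cdot 1$) give $dc=1$, so $c$ is genuinely invertible and hence equals $1$; with that, antisymmetry of left divisibility, and therefore uniqueness of right lcms, is secured and the rest goes through as written.
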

The following technical lemmas are the basis of induction for the proof of Theorem \ref{thm_delta_lcm_atoms}.
\begin{lem}\label{lem_MXin X}
It holds that  $M\setminus X \subseteq X \bigcup \{\epsilon\}$.
\end{lem}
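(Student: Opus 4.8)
The plan is an induction on the norm $\parallel b \parallel$ of the element $b \in M$, proving that $b \setminus x \in X \cup \{\epsilon\}$ for every atom $x \in X$. This induction is well-founded because $M$ is atomic (Claim~\ref{complement_f_defined}), and all the complements in question are defined because $M$ is a Garside monoid (Theorem~\ref{theo:garside}), so any finite subset of $M$ admits a right lcm.

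The base cases are immediate: if $b = 1$ then $1 \setminus x = x \in X$, and if $\parallel b \parallel = 1$ then $b$ is itself an atom, so $b \setminus x \in X \cup \{\epsilon\}$ by the claim establishing condition $(C_3)$ (equivalently by Lemma~\ref{form_compl}). For the inductive step, write $b = y b'$ with $y \in X$ the first letter of some word representing $b$ and $\parallel b' \parallel < \parallel b \parallel$ (this strict drop follows from the Basic Fact $\parallel xy \parallel \ge \parallel x \parallel + \parallel y \parallel$ and atomicity). I would then use the word-reversing composition rule
\[
(y b') \setminus x \;=\; b' \setminus (y \setminus x),
\]
which follows from Lemma~\ref{formules_complement_lcm}(iii) together with left cancellativity: one first checks that $uv \vee w = u\bigl(v \vee (u \setminus w)\bigr)$, since the right-hand side is a common right multiple of $uv$ and $w$ and divides every such common multiple, and then cancels $uv$ on the left. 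Now $y \setminus x \in X \cup \{\epsilon\}$ by the $(C_3)$ claim; if $y \setminus x = \epsilon$ then $b \setminus x = b' \setminus \epsilon = \epsilon$, and if $y \setminus x = z \in X$ then $b \setminus x = b' \setminus z \in X \cup \{\epsilon\}$ by the induction hypothesis applied to $b'$. This closes the induction and yields $M \setminus X \subseteq X \cup \{\epsilon\}$.

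The only ingredient that is not pure bookkeeping is the composition rule $(uv) \setminus w = v \setminus (u \setminus w)$, and I expect this to be the main (mild) obstacle: one must either cite it from \cite{deh_francais, deh_livre} as part of the word-reversing formalism, or supply the short derivation sketched above. Everything else rests on facts already established here, namely atomicity of $M$, the closure of $X \cup \{\epsilon\}$ under $\setminus$, and the existence of right lcms in the Garside monoid $M$.
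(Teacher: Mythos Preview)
Your proof is correct and is essentially a detailed unfolding of the paper's own argument: the paper's one-line proof (``$X\setminus X\subseteq X\cup\{\epsilon\}$ and this implies inductively that $M\setminus X\subseteq X\cup\{\epsilon\}$ (see the reversing diagram)'') is precisely your induction on word length, with the reversing diagram encoding the composition rule $(yb')\setminus x = b'\setminus(y\setminus x)$ that you derive. The only difference is that you justify this rule via Lemma~\ref{formules_complement_lcm} and left cancellativity, whereas the paper simply points to the reversing-diagram picture (cf.\ Remark~\ref{rem_interpret_compl_g}).
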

\begin{proof}
It holds that $S(X \times X) \subseteq X \times X$, so $X\setminus X \subseteq X\bigcup \{\epsilon\}$ and this implies inductively that $M \setminus X \subseteq X \bigcup \{\epsilon\}$ (see the reversing diagram).
\end{proof}
\begin{lem} \label{lem_MlcmX_inlcmX}
It holds that  $M\setminus (\vee_{j=1}^{j=k} x_{i_{j}}) \subseteq \overline{X}^{\vee}\bigcup \{\epsilon\}$, where $x_{i_{j}} \in X$ for $ 1\leq j \leq k$.
\end{lem}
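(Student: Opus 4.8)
The plan is to argue by induction on $k$, the number of generators whose right lcm is being divided into. The base case $k=1$ is exactly Lemma~\ref{lem_MXin X}, which tells us $M \setminus x_{i_{1}} \subseteq X \cup \{\epsilon\} \subseteq \overline{X}^{\vee}\cup\{\epsilon\}$. For the inductive step, suppose the statement holds for any right lcm of $k-1$ generators. Write $v = \vee_{j=1}^{k}x_{i_{j}} = \big(\vee_{j=1}^{k-1}x_{i_{j}}\big)\vee x_{i_{k}}$, and let $u \in M$ be arbitrary. Using part~(i) of Lemma~\ref{formules_complement_lcm} (valid since by Proposition~\ref{M_c1&c2} the monoid $M$ satisfies $(C_{0})$, $(C_{1})$ and $(C_{2})$), we get
\[
u \setminus v \;=\; u \setminus\!\Big(\big(\textstyle\bigvee_{j=1}^{k-1}x_{i_{j}}\big)\vee x_{i_{k}}\Big) \;=\; \Big(u \setminus \textstyle\bigvee_{j=1}^{k-1}x_{i_{j}}\Big)\;\vee\;\big(u \setminus x_{i_{k}}\big).
\]
By the induction hypothesis the first factor lies in $\overline{X}^{\vee}\cup\{\epsilon\}$, and by Lemma~\ref{lem_MXin X} the second factor lies in $X\cup\{\epsilon\}\subseteq\overline{X}^{\vee}\cup\{\epsilon\}$. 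Since $\overline{X}^{\vee}$ is by definition closed under right lcm and absorbs $\epsilon$ (an lcm with $\epsilon$ changes nothing), the right-hand side lies in $\overline{X}^{\vee}\cup\{\epsilon\}$, which is what we needed.

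One point that deserves care is that the displayed identity presupposes that the relevant right lcms actually exist, so that Lemma~\ref{formules_complement_lcm} may be applied; but this is guaranteed because $M$ satisfies $(C_{2})$ and, by Claim~\ref{cl_compl} together with the iterated word-reversing argument of Lemma~\ref{lem_MXin X}, any finite family of generators has a common right multiple, hence a right lcm, and likewise any element $u$ has a right lcm with such an lcm. In other words, every reversing computation invoked here terminates and all the expressions $u\setminus(\cdot)$ are defined as genuine elements of $M$.

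I expect the main (minor) obstacle to be purely bookkeeping: making sure the lcm in the conclusion is taken over a genuinely finite indexed family of generators so that $\overline{X}^{\vee}$, the closure of the finite set $X$ under right lcm, really does contain it, and checking that the associativity of $\vee$ used to split off $x_{i_{k}}$ is legitimate (it is, since in a Gaussian-type monoid $\vee$ is associative wherever defined). No genuinely new idea is required beyond Lemma~\ref{lem_MXin X} and the calculus of complements in Lemma~\ref{formules_complement_lcm}; this lemma is, as the text says, just the inductive scaffolding for Theorem~\ref{thm_delta_lcm_atoms}.
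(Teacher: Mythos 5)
Your proof is correct and follows essentially the same route as the paper: the paper also distributes $u\setminus(\cdot)$ over the right lcm via Lemma~\ref{formules_complement_lcm}(i) (applied ``inductively'', i.e.\ exactly your peeling-off induction on $k$) and then invokes Lemma~\ref{lem_MXin X} to place each $u\setminus x_{i_j}$ in $X\cup\{\epsilon\}$, so the result lies in $\overline{X}^{\vee}\cup\{\epsilon\}$. Your extra care about existence of the lcms is harmless but unnecessary here, since $M$ has already been shown to be Garside (Theorem~\ref{theo:garside}), so any two elements admit a right lcm.
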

\begin{proof}
Let $u \in M$, then from lemma \ref{formules_complement_lcm} we have
inductively that $u \setminus (\vee_{j=1}^{j=k} x_{i_{j}}) = \vee_{j=1}^{j=k} (u \setminus x_{i_{j}})$.
From lemma \ref{lem_MXin X}, $u \setminus x_{i_{j}}$ belongs to $X$ so
$\vee_{j=1}^{j=k} (u \setminus x_{i_{j}})$ is in $\overline{X}^{\vee}\bigcup \{\epsilon\}$.
\end{proof}
Since the monoid $M$ is Garside, the set of simples  $\chi$  is finite and its  construction is  done in a finite number of steps in the following way:\\At the $0-$th step,  $\chi_{0}=X$.\\
At the first step,  $\chi_{1}=X \bigcup \{x_{i} \vee x_{j};$ for all  $x_{i},x_{j} \in X \}\bigcup
\{x_{i} \setminus x_{j};$ for all  $x_{i},x_{j} \in X \}$.\\
At the second step,  $\chi_{2}=\chi_{1} \bigcup \{u \vee v;$ for all  $u,v \in \chi_{1} \} \bigcup \{u \setminus v;$ for all  $u,v \in \chi_{1} \}$.\\
 We go on inductively and after a finite number of steps $k$, $\chi_{k}=\chi$.
\begin{prop}\label{prop_Simples}
It holds that $\chi=\overline{X}^{\vee}\bigcup \{\epsilon\}$.
\end{prop}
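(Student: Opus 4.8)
The plan is to prove the two inclusions $\chi \supseteq \overline{X}^{\vee}\cup\{\epsilon\}$ and $\chi \subseteq \overline{X}^{\vee}\cup\{\epsilon\}$ separately, the first being essentially immediate and the second being the substantial part. For the inclusion $\overline{X}^{\vee}\cup\{\epsilon\}\subseteq\chi$: by Claim \ref{cl_compl} every pair of generators has a right common multiple, hence (since $M$ satisfies $(C_2)$ by Proposition \ref{M_c1&c2}) a right lcm; so $X\subseteq\chi$ and $\chi$ is closed under right lcm by definition of simple element. An easy induction on the number of factors then shows $\overline{X}^{\vee}\subseteq\chi$, and $\epsilon\in\chi$ is part of the definition. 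The content is therefore the reverse inclusion $\chi\subseteq\overline{X}^{\vee}\cup\{\epsilon\}$.

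For that, I would argue by induction on the step $k$ in the construction $\chi_0=X\subseteq\chi_1\subseteq\cdots\subseteq\chi_k=\chi$ recalled just before the statement, proving $\chi_k\subseteq\overline{X}^{\vee}\cup\{\epsilon\}$. The base case $\chi_0=X\subseteq\overline{X}^{\vee}$ is trivial. For the inductive step, assume $\chi_{k}\subseteq\overline{X}^{\vee}\cup\{\epsilon\}$; I must show that for all $u,v\in\chi_k$ both $u\vee v$ and $u\setminus v$ lie in $\overline{X}^{\vee}\cup\{\epsilon\}$. For the right lcm part this is automatic: a right lcm of two elements of $\overline{X}^{\vee}$ is again a right lcm of finitely many generators, hence in $\overline{X}^{\vee}$ (using Lemma \ref{formules_complement_lcm}(i) and (iii) together with associativity of $\vee$). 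For the complement part, write $u,v\in\overline{X}^{\vee}$, say $v=\bigvee_{j=1}^{m}x_{i_j}$ with each $x_{i_j}\in X$; then by Lemma \ref{formules_complement_lcm}(i) applied inductively, $u\setminus v = u\setminus\bigl(\bigvee_{j=1}^m x_{i_j}\bigr)=\bigvee_{j=1}^m (u\setminus x_{i_j})$. Since $u\in M$, Lemma \ref{lem_MXin X} gives $u\setminus x_{i_j}\in X\cup\{\epsilon\}$ for each $j$, and therefore $u\setminus v$ is a right lcm of finitely many generators, i.e.\ lies in $\overline{X}^{\vee}\cup\{\epsilon\}$ — this is exactly the statement of Lemma \ref{lem_MlcmX_inlcmX}. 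Hence $\chi_{k+1}\subseteq\overline{X}^{\vee}\cup\{\epsilon\}$, closing the induction, and since $\chi=\chi_k$ for some $k$ (finiteness of the simples, which holds because $M$ is Garside), we conclude $\chi\subseteq\overline{X}^{\vee}\cup\{\epsilon\}$.

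Combining the two inclusions yields $\chi=\overline{X}^{\vee}\cup\{\epsilon\}$. The main obstacle is not any single hard computation but rather making sure the reduction $u\setminus v = \bigvee_j(u\setminus x_{i_j})$ is legitimate: this requires $M$ to satisfy $(C_0)$, $(C_1)$, $(C_2)$ so that Lemma \ref{formules_complement_lcm} applies, and it requires knowing that the relevant right lcms actually exist in $M$ — both of which are already in hand from Section 4 (atomicity, coherence, $(C_1)$, $(C_2)$) and from Claim \ref{cl_compl}. Once Proposition \ref{prop_Simples} is established, Theorem \ref{thm_delta_lcm_atoms} follows at once: the Garside element $\Delta$ is the right lcm of all simples, which by the proposition equals the right lcm of $\overline{X}^{\vee}\cup\{\epsilon\}$, and that is just the right lcm of the atoms in $X$.
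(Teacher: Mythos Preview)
Your proposal is correct and follows essentially the same approach as the paper: both argue by induction on the steps $\chi_0\subseteq\chi_1\subseteq\cdots$ of the simple-closure construction, using Lemma~\ref{lem_MlcmX_inlcmX} to show that any complement $u\setminus v$ with $v\in\overline{X}^{\vee}$ lands back in $\overline{X}^{\vee}\cup\{\epsilon\}$. You are slightly more explicit than the paper in separating out the two inclusions and in handling the lcm case of the inductive step, but the substance is identical.
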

\begin{proof}
The proof is by induction on the number of steps $k$ in the construction of $\chi$. We show that each complement of simple elements is the right lcm of some generators.\\
At the first step, we have that $\{x_{i} \setminus x_{j};$ for all  $x_{i},x_{j} \in X \}= X\bigcup \{\epsilon\}$.\\
At the following steps, we do not consider the complements of the form $...\setminus x_{i}$ since these belong to $X$ (see lemma \ref{lem_MXin X}).\\
At the second step, the complements have the following form
$x_{i} \setminus (x_{l} \vee x_{m})$ or $(x_{i} \vee x_{j}) \setminus (x_{l} \vee x_{m})$ and these belong to  $\overline{X}^{\vee}\bigcup \{\epsilon\}$   from lemma \ref{lem_MlcmX_inlcmX}.\\
Assume that at the $k-$th step, all the complements obtained belong to
$\overline{X}^{\vee}\bigcup \{\epsilon\}$. So, from lemma \ref{lem_MlcmX_inlcmX},  all the elements of $\chi_{k}$ are  right lcm of generators. \\
At the $(k+1)-$th step, the complements have the following form $u \setminus v $, where $u,v \in \chi_{k}$. From the induction assumption,  $v$ is  a right lcm of generators and from lemma \ref{lem_MlcmX_inlcmX},  $u \setminus v $  belongs to  $\overline{X}^{\vee}\bigcup \{\epsilon\}$. \end{proof}
\textbf{Proof of Theorem \ref{thm_delta_lcm_atoms}}
\begin{proof}
The right lcm of the set of simples $\chi$ is a Garside element and since from Proposition \ref{prop_Simples}  $\chi
=\overline{X}^{\vee}\bigcup \{\epsilon\}$, we have that the right lcm of $\overline{X}^{\vee}$ is a Garside element.
From the uniqueness of the lcm, we have that the right lcm of $X$ is a Garside element.
\end{proof}
We show now that the length of a Garside element $\Delta$ is $n$. In order to show that, we prove that the right lcm of $k$ different generators has length $k$ using the following technical lemmas.
\begin{lem}\label{lem:h3equalx}
Let $h_{i}, x$ be all different elements in $X$, for $1 \leq i \leq 3$.
If $(h_{1} \setminus h_{2}) \setminus (h_{1} \setminus h_{3})=  (h_{1} \setminus h_{2}) \setminus (h_{1} \setminus x)$,  then $h_{3}=x$.
\end{lem}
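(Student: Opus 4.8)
The plan is to exploit the explicit formula for the complement of two generators from Lemma~\ref{form_compl}, namely $x_{a}\setminus x_{b}=g_{a}^{-1}(b)$ for $a\neq b$, together with the iterated-complement identity already established in the proof of Proposition~\ref{M_c1&c2}. Recall that there we showed, for distinct $i,k,m$,
\[
(x_{i}\setminus x_{k})\setminus(x_{i}\setminus x_{m})=g^{-1}_{g_{i}^{-1}(k)}\,g_{i}^{-1}(m),
\]
an \emph{equality in the free monoid} $X^{*}$ because the range of $f$ is $X$. So the hypothesis of the lemma, applied with $i=1$, $k=2$ and with $m=3$ on one side and $m=x$ on the other, becomes the equation of single letters
\[
g^{-1}_{g_{1}^{-1}(2)}\bigl(g_{1}^{-1}(3)\bigr)=g^{-1}_{g_{1}^{-1}(2)}\bigl(g_{1}^{-1}(x)\bigr).
\]

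First I would check that all the relevant complements actually exist and are genuine letters: this is exactly Claim~\ref{cl_compl} (every pair of distinct generators has a unique defining relation $x_{i}a=x_{j}b$ with $a,b\in X$), so $x_{1}\setminus x_{2}$, $x_{1}\setminus x_{3}$, $x_{1}\setminus x$ all lie in $X$, and since $h_{1},h_{2},h_{3},x$ are pairwise distinct the inner complements are non-empty. Then the whole statement lives in $X^{*}$ and I may cancel freely. From the displayed equation, apply the bijection $g_{g_{1}^{-1}(2)}$ (which is a permutation of $X$ by non-degeneracy, Claim~\ref{debut_form}(i)) to both sides to get $g_{1}^{-1}(3)=g_{1}^{-1}(x)$, and then apply $g_{1}$ to conclude $3=x$, i.e.\ $h_{3}=x$.

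The only real point to be careful about is the bookkeeping of which elements are assumed distinct and hence which functions $g_{i}^{-1}(\cdot)$ take distinct values — one needs $g_{1}^{-1}(2)\neq g_{1}^{-1}(3)$ and $g_{1}^{-1}(2)\neq g_{1}^{-1}(x)$ so that the ``$\setminus$'' on the outer level is non-degenerate and the formula from Proposition~\ref{M_c1&c2} applies rather than the trivial case $u\setminus u=\epsilon$; all of these follow from the hypothesis that $h_{1},h_{2},h_{3},x$ are all different together with injectivity of $g_{1}$. So there is no serious obstacle: the lemma is essentially a repackaging of the identity inside the proof of Proposition~\ref{M_c1&c2} plus two applications of injectivity. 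I would phrase the proof as: reduce to the free-monoid equality via Lemma~\ref{form_compl} and the computation in Proposition~\ref{M_c1&c2}, then cancel using bijectivity of the $g_{i}$'s.
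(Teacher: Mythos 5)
Your proof is correct and takes essentially the same route as the paper: both reduce the hypothesis via Lemma \ref{form_compl} to an equality $g^{-1}_{h_{1}\setminus h_{2}}(g^{-1}_{h_{1}}(h_{3}))=g^{-1}_{h_{1}\setminus h_{2}}(g^{-1}_{h_{1}}(x))$ of single letters and then invoke non-degeneracy (injectivity of the $g$'s). The only difference is cosmetic: the paper phrases the final step as a contradiction with non-degeneracy, while you run it directly by applying the inverse bijections to conclude $h_{3}=x$, and you are a bit more explicit about the distinctness conditions ensuring the complements are nonempty letters.
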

\begin{proof}
It holds that each expression $h_{i} \setminus h_{j}$ belongs to $X$, so let denote $h_{1} \setminus h_{2}$ by $a$, $h_{1} \setminus h_{3}$ by $b$ and $h_{1} \setminus x$ by $c$.
Then we have $a\setminus b=a\setminus c$, where $a,b,c \in X$.
From lemma \ref{form_compl}, this means that $g^{-1}_{a}(b)=g^{-1}_{a}(c)$. But this contradicts the fact that $(X,S)$ is non-degenerate.
\end{proof}
\begin{lem}\label{lem_compl_egal}
Let $h_{i}, x$ be all different elements in $X$, for $1 \leq i \leq k$.
Then $(\vee_{i=1}^{i=k}h_{i}) \setminus x$ is not equal to the empty word.
\end{lem}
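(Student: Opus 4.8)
The plan is to prove Lemma \ref{lem_compl_egal} by induction on $k$, reducing the multi-generator case to repeated use of the distributivity formulas of Lemma \ref{formules_complement_lcm} and the single-generator non-degeneracy statement already recorded in Lemma \ref{lem:h3equalx}. First I would dispose of the base case $k=1$: if $h_{1}\setminus x=\epsilon$, then $x$ is a left divisor of $h_{1}$; since all relations are length-preserving and $M$ is atomic, the only left divisor of the atom $h_{1}$ apart from $1$ is $h_{1}$ itself, so $x=h_{1}$, contradicting that $h_{1},x$ are distinct. (Equivalently, by Lemma \ref{form_compl}, $h_{1}\setminus x = g^{-1}_{h_{1}}(x)$ is a genuine generator whenever $x\neq h_{1}$, hence never $\epsilon$.)

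For the inductive step, assume the statement for $k-1$ and suppose toward a contradiction that $(\vee_{i=1}^{k}h_{i})\setminus x=\epsilon$, i.e. that $x$ is a left divisor of $\vee_{i=1}^{k}h_{i}$. Write $u=\vee_{i=1}^{k-1}h_{i}$ so that $\vee_{i=1}^{k}h_{i}=u\vee h_{k}$, and apply Lemma \ref{formules_complement_lcm}(ii): $(u\vee h_{k})\setminus x=(u\setminus h_{k})\setminus(u\setminus x)$. By the induction hypothesis applied to the $k-1$ distinct generators $h_{1},\dots,h_{k-1}$ together with $x$ (all distinct), $u\setminus x\neq\epsilon$; since $M\setminus X\subseteq X\cup\{\epsilon\}$ by Lemma \ref{lem_MXin X}, in fact $u\setminus x=:c\in X$. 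Likewise $u\setminus h_{k}=:a$ lies in $X\cup\{\epsilon\}$. If $a=\epsilon$ then $h_{k}$ divides $u=\vee_{i<k}h_{i}$; but then $\vee_{i=1}^{k}h_{i}=u$ and the hypothesis reads $u\setminus x=\epsilon$, again contradicting the induction hypothesis. So $a\in X$, and the equation becomes $a\setminus c=\epsilon$ with $a,c\in X$; by Lemma \ref{form_compl} (or the base case just proved) this forces $a=c$, i.e. $u\setminus h_{k}=u\setminus x$. Here is where I would invoke the non-degeneracy argument packaged in the proof of Lemma \ref{lem:h3equalx}: reversing the chain of complements one step at a time and using $g^{-1}_{(\cdot)}$ injective at each stage, $u\setminus h_{k}=u\setminus x$ implies $h_{k}=x$, contradicting the assumption that the $h_{i}$ and $x$ are all distinct.

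The main obstacle I anticipate is the very last implication, ``$u\setminus h_{k}=u\setminus x\Rightarrow h_{k}=x$'' when $u$ is a lcm of several generators rather than a single one: Lemma \ref{lem:h3equalx} is stated only for $u=h_{1}\setminus h_{2}$, a single generator. To bridge this I would prove a short auxiliary injectivity statement: for any $u\in M$, the map $x\mapsto u\setminus x$ is injective on $X\setminus(\text{left divisors of }u)$. This follows by induction on $\|u\|$, writing $u=u'a$ with $a\in X$, using $u\setminus x=a\setminus(u'\setminus x)$ (a consequence of $(u'a)\setminus x$ reversing, cf. the reversing diagram), the induction hypothesis on $u'$, and the single-letter injectivity $g^{-1}_{a}$ from Lemma \ref{form_compl}. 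With that lemma in hand the inductive step above closes cleanly, and no genuinely new computation beyond the already-established non-degeneracy/injectivity facts is needed.
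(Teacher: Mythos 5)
Your argument is correct, but it is organized quite differently from the paper's. The paper expands $(\vee_{i=1}^{k}h_{i})\setminus x$ completely via iterated use of Lemma \ref{formules_complement_lcm}(ii), writes out the resulting nested formula explicitly for $k=3$ and $k=4$, notes that the inner complements lie in $X$, and peels off the outer complements until Lemma \ref{lem:h3equalx} applies; the general $k$ is then handled only by the assertion that ``exactly the same argument'' works for the more complicated formula. You instead run a genuine induction on $k$: writing $\vee_{i=1}^{k}h_{i}=u\vee h_{k}$ with $u=\vee_{i=1}^{k-1}h_{i}$, a single application of Lemma \ref{formules_complement_lcm}(ii), the induction hypothesis and Lemma \ref{lem_MXin X} reduce everything to the implication $u\setminus h_{k}=u\setminus x\Rightarrow h_{k}=x$. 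You are right that Lemma \ref{lem:h3equalx} as stated does not cover this, since there $u$ is a single generator, and your auxiliary injectivity claim is the correct patch; in fact it is precisely the content of Remark \ref{rem_interpret_compl_g}: when $u\setminus y\neq\epsilon$, none of the intermediate complements in the reversing diagram vanish and $u\setminus y=g^{-1}_{u}(y)$, a composition of the bijections $g^{-1}_{*}$, so $y\mapsto u\setminus y$ is injective off the left divisors of $u$. (That remark appears only in Section 7, but it rests solely on Lemma \ref{form_compl} and the reversing diagram, so there is no circularity.) Your treatment of the degenerate case $u\setminus h_{k}=\epsilon$ is also sound and avoids invoking Theorem \ref{thm_Garside_length_n}, which would be circular since its proof uses the present lemma. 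The payoff of your route is a uniform, fully spelled-out argument for all $k$, tightening the paper's ``and the same argument holds for general $k$''; the cost is the extra auxiliary lemma, whose proof is however a short induction already implicit in the paper's own reversing-diagram computations.
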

\begin{proof}
If $k=3$, we obtain from lemma \ref{formules_complement_lcm}(ii)  that\\
$(\vee_{i=1}^{i=3}h_{i}) \setminus x = ((h_{1} \setminus h_{2}) \setminus (h_{1} \setminus h_{3})) \setminus ((h_{1} \setminus h_{2}) \setminus (h_{1} \setminus x))$.\\
If $(\vee_{i=1}^{i=3}h_{i}) \setminus x =\epsilon$, then since all the expressions of the form $(h_{i} \setminus h_{j}) \setminus (h_{l} \setminus h_{m})$ belong to $X$, we have that \\
$(h_{1} \setminus h_{2}) \setminus (h_{1} \setminus h_{3})=  (h_{1} \setminus h_{2}) \setminus (h_{1} \setminus x)$.\\
From lemma \ref{lem:h3equalx}, this implies that  $h_{3}=x$.
But this is a contradiction.\\
If $k=4$, we obtain from lemma \ref{formules_complement_lcm}(ii) that\\
$(\vee_{i=1}^{i=4}h_{i}) \setminus x = (((h_{1} \setminus h_{2}) \setminus (h_{1} \setminus h_{3})) \setminus ((h_{1} \setminus h_{2}) \setminus (h_{1} \setminus h_{4}))) \setminus (((h_{1} \setminus h_{2}) \setminus (h_{1} \setminus h_{3})) \setminus ((h_{1} \setminus h_{2}) \setminus (h_{1} \setminus x)))$.\\
If $(\vee_{i=1}^{i=4}h_{i}) \setminus x =\epsilon$, then since all the expressions of the form $(h_{i} \setminus h_{j}) \setminus (h_{l} \setminus h_{m})$ belong to $X$, we have that \\
$((h_{1} \setminus h_{2}) \setminus (h_{1} \setminus h_{3})) \setminus ((h_{1} \setminus h_{2}) \setminus (h_{1} \setminus h_{4}))=((h_{1} \setminus h_{2}) \setminus (h_{1} \setminus h_{3})) \setminus ((h_{1} \setminus h_{2}) \setminus (h_{1} \setminus x))$\\ and this implies that \\
$(h_{1} \setminus h_{2}) \setminus (h_{1} \setminus h_{4})=(h_{1} \setminus h_{2}) \setminus (h_{1} \setminus x)$\\
and from lemma \ref{lem:h3equalx} we obtain $x=h_{4}$ and  this is a contradiction.\\
For a general $k$, the formula is even more complicated but exactly the same argument we used in the cases $k=3$ and $k=4$  holds.
\end{proof}
\begin{thm}\label{thm_Garside_length_n}
Let $G$ be the structure group of a non-degenerate, braided and involutive solution $(X,S)$, where $X=\{x_{1},..,x_{n}\}$ and let  $M$ be the monoid with the same presentation.
Let $\Delta$ be a Garside element in $M$.
Then the length of $\Delta$ is  $n$.
\end{thm}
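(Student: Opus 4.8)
The plan is to reduce Theorem~\ref{thm_Garside_length_n} to the claim that the right least common multiple of any $k$ pairwise distinct generators has length exactly $k$, and then to specialise to $k=n$. First I would recall that, by Theorem~\ref{thm_delta_lcm_atoms} together with Proposition~\ref{prop_Simples}, a Garside element of $M$ may be taken to be $\Delta=\bigvee_{i=1}^{n}x_{i}$, the right lcm of the set of atoms; since $M$ is Gaussian it is a lattice under left divisibility, so this iterated right lcm is well defined and independent of the order of the factors, and in particular $\bigvee_{j=1}^{k+1}x_{i_{j}}=\bigl(\bigvee_{j=1}^{k}x_{i_{j}}\bigr)\vee x_{i_{k+1}}$. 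I would also record that, because every defining relation of $M$ preserves length, $M$ carries a well-defined length function $\ell\colon M\to\mathbb{Z}_{\ge 0}$ with $\ell(uv)=\ell(u)+\ell(v)$ and $\ell(x)=1$ for $x\in X$ (this is the Basic Facts~(a) of Section~3); the length of $\Delta$ is then $\ell(\Delta)$.

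The core is an induction on $k$ establishing: if $x_{i_{1}},\dots,x_{i_{k}}$ are pairwise distinct elements of $X$, then $w_{k}:=\bigvee_{j=1}^{k}x_{i_{j}}$ satisfies $\ell(w_{k})=k$. The base case $k=1$ is immediate. For the inductive step I would write $w_{k+1}=w_{k}\vee x_{i_{k+1}}=w_{k}\cdot\bigl(w_{k}\setminus x_{i_{k+1}}\bigr)$, using the defining identity $x\vee y=x(x\setminus y)$ and associativity of the lcm. By the inductive hypothesis $\ell(w_{k})=k$. The factor $w_{k}\setminus x_{i_{k+1}}$ has the shape (element of $M$)$\setminus$(generator), so Lemma~\ref{lem_MXin X} forces it into $X\cup\{\epsilon\}$; and Lemma~\ref{lem_compl_egal}, applied to the $k+1$ pairwise distinct elements $x_{i_{1}},\dots,x_{i_{k}},x_{i_{k+1}}$, guarantees it is not the empty word. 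Hence $\ell\bigl(w_{k}\setminus x_{i_{k+1}}\bigr)=1$, and additivity of $\ell$ gives $\ell(w_{k+1})=k+1$. Taking $k=n$ with $\{x_{i_{1}},\dots,x_{i_{n}}\}=X$ yields $\ell(\Delta)=n$.

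I do not expect a serious obstacle: all the substantive content has already been packaged into Lemmas~\ref{lem_MXin X} and~\ref{lem_compl_egal} (and, behind the latter, Lemma~\ref{lem:h3equalx} and non-degeneracy of $(X,S)$), so the theorem is essentially a bookkeeping induction. The only points that deserve an explicit word of justification are (i) that the iterated right lcm $\bigvee_{i}x_{i}$ is unambiguous and associative, which follows from $M$ being Gaussian, and (ii) that $\ell$ is additive and normalised on atoms, which follows from length-preservation of the relations; once these are in place the induction runs mechanically. It is worth remarking that the argument actually proves slightly more than the bare statement, namely that the right lcm of any $k$ distinct atoms is a simple element of length $k$, which pins down $\Delta$ as the shortest element left-divisible by every generator.
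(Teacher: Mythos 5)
Your proposal is correct and follows essentially the same route as the paper: reduce via Theorem \ref{thm_delta_lcm_atoms} to showing the right lcm of $k$ distinct generators has length $k$, then induct using Lemma \ref{lem_compl_egal} to rule out the empty complement and Lemma \ref{lem_MXin X} to force the new factor $w_{k}\setminus x_{i_{k+1}}$ to be a single generator. The only cosmetic differences are your base case $k=1$ versus the paper's explicit $k=2$, and your explicit bookkeeping of the additive length function, which the paper leaves implicit.
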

\begin{proof}
From theorem \ref{thm_delta_lcm_atoms}, $\Delta$ represents the right lcm of the elements in $X$, that is
$\Delta= x_{1} \vee x_{2} \vee ...\vee x_{n}$ in $M$.
We show by induction that a word representing the right lcm  $x_{i_{1}}\vee x_{i_{2}} \vee..\vee x_{i_{k}}$ has length  $k$, where $x_{i_{j}} \neq x_{i_{l}}$ for $j \neq l$.\\
If $k=2$, then there are different generators $a,b$ such that $S(x_{i_{1}},a)=(x_{i_{2}},b)$, so $x_{i_{1}}a=x_{i_{2}}b$
is a relation in $M$ and the right lcm of $x_{i_{1}},x_{i_{2}}$ has length $2$.
Assume that the  right lcm  $x_{i_{1}}\vee x_{i_{2}} \vee..\vee x_{i_{k-1}}$ has length  $k-1$. Then the right lcm  $x_{i_{1}}\vee x_{i_{2}} \vee..\vee x_{i_{k-1}} \vee x_{i_{k}}$ is obtained from the reversing diagram corresponding to the  words $x_{i_{1}}\vee x_{i_{2}} \vee..\vee x_{i_{k-1}}$ and $x_{i_{k}}$.
From lemma \ref{lem_compl_egal}, $(x_{i_{1}}\vee x_{i_{2}} \vee..\vee x_{i_{k-1}})\setminus x_{i_{k}}$ is not equal to the empty word, so from lemma \ref{lem_MXin X} it has length $1$.
So, the right lcm  $x_{i_{1}}\vee x_{i_{2}} \vee..\vee x_{i_{k}}$ has length  $k$ and this implies that  $x_{1}\vee x_{2} \vee..\vee x_{n}$ has length  $n$.
\end{proof}
\section{The homological dimension of  structure groups is bounded}
 In \cite{deh_homologie}, the authors construct a  resolution of $\Bbb
Z$ (as trivial ${\Bbb Z}M$-module) by free ${\Bbb Z}M$-modules,
when $M$ satisfies some conditions. They show that if $M$ is a Garside monoid then the resolution
defined in \cite{charney}, using another approach,  is isomorphic to the resolution they define.
We will use the resolution from \cite{deh_homologie} in
order to show that  the homological dimension of the structure
group corresponding to a ``set-theoretical'' solution $(X,S)$ of the
quantum Yang-Baxter equation is bounded from above by the number of
generators in $X$.
\begin{defn}\cite{deh_homologie}
A monoid $M$ is \textbf{left Noetherian}  if left divisibility is
well-founded, i.e there is no infinite descending sequence of left
divisions.  A monoid $M$ is \textbf{a locally Gaussian monoid} if
$M$ is cancellative, left and right Noetherian and every two
elements of $M$ admitting a common right (resp.left) multiple
admits a right (resp.left) lcm. A monoid $M$ is \textbf{ (locally) Garside} if it is  (locally) Gaussian and it admits a finite generating subset $Y$ that is closed under right and left lcm and under left and right complements.
\end{defn}
 In \cite{deh_homologie}, the condition for  the resolution
 of $\Bbb Z$  by free ${\Bbb Z}M$-modules is that $M$ is a
 locally Gaussian monoid, so we can use it for the monoid $M$ with the same
 presentation as the structure group corresponding to
a ``set-theoretical'' solution of the quantum Yang-Baxter equation,
since these are Garside.
\begin{prop}\cite[Prop.2.9-2.10]{deh_homologie}
If $M$ is a (locally) Garside monoid, then the resolution is
finite, so $M$ is of type FL. If $M$ is Garside and $G$ is its
group of fractions, then the functor ${\Bbb Z}G\otimes_{{\Bbb
Z}M}-$ is exact, so it follows that every Garside group is of type
FL.
\end{prop}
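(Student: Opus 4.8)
The plan is to establish the two assertions separately, taking as input the explicit free resolution $C_{\bullet} \to {\Bbb Z}$ of the trivial ${\Bbb Z}M$-module constructed in \cite{deh_homologie}, reading off finiteness from its combinatorial description, and then transporting it across the embedding ${\Bbb Z}M \hookrightarrow {\Bbb Z}G$. First I would recall the shape of that resolution: its $n$-th chain module $C_{n}$ is the free ${\Bbb Z}M$-module on a finite set of $n$-cells, each cell being an admissible strictly increasing $n$-tuple of elements drawn from the distinguished finite generating set $Y$ that a (locally) Garside monoid possesses by definition (the set closed under left and right lcm and under left and right complement). Both finiteness statements then rest on the fact that this generating set is finite.

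For the monoid assertion I would argue finiteness in two directions. For the length, every $n$-cell is a strictly increasing tuple over the finite set $Y$, so there are no nonzero cells in any dimension $n > |Y|$; hence $C_{n} = 0$ for $n$ large and the resolution has finite length. For the rank, in each dimension $n \le |Y|$ there are only finitely many admissible $n$-cells, so each $C_{n}$ is a finitely generated free ${\Bbb Z}M$-module. Together these say that $C_{\bullet} \to {\Bbb Z}$ is a resolution of finite length by finitely generated free modules, which is precisely the statement that $M$ is of type FL.

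For the group assertion I would use, as recorded earlier in the excerpt, that a Garside monoid is Gaussian and therefore cancellative and satisfies both Ore conditions, so it embeds in its group of fractions $G$ and $G$ is an Ore localization of $M$. Passing to integral group rings, ${\Bbb Z}G$ is the localization of ${\Bbb Z}M$ at the associated two-sided denominator set; since Ore localizations are flat, the functor ${\Bbb Z}G \otimes_{{\Bbb Z}M} -$ is exact. Applying this exact functor to the finite free resolution $C_{\bullet} \to {\Bbb Z}$ of the first part yields the complex ${\Bbb Z}G \otimes_{{\Bbb Z}M} C_{\bullet}$, which stays acyclic in positive degrees; its terms ${\Bbb Z}G \otimes_{{\Bbb Z}M} C_{n}$ are finitely generated free ${\Bbb Z}G$-modules, and its target is ${\Bbb Z}G \otimes_{{\Bbb Z}M} {\Bbb Z} = {\Bbb Z}$ because the augmentation ${\Bbb Z}M \to {\Bbb Z}$ factors through ${\Bbb Z}G$. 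Hence ${\Bbb Z}$ admits a finite free resolution over ${\Bbb Z}G$, i.e. $G$ is of type FL.

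I expect the main obstacle to be the verification that ${\Bbb Z}G \otimes_{{\Bbb Z}M} -$ is genuinely exact, equivalently that ${\Bbb Z}G$ is flat over ${\Bbb Z}M$: one must check that the image of $M$ is a two-sided Ore (denominator) set in ${\Bbb Z}M$ and that the resulting noncommutative localization is exactly ${\Bbb Z}G$, after which flatness and the stability of the resolution are formal homological algebra. The acyclicity of $C_{\bullet}$ itself I take as given from \cite{deh_homologie}; the remaining combinatorial input, namely the vanishing of cells above dimension $|Y|$ and the finiteness of their number in each degree, is routine once the admissibility condition on cells is in place.
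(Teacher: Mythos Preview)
The paper does not give its own proof of this proposition: it is quoted verbatim as \cite[Prop.~2.9--2.10]{deh_homologie} and used as a black box, so there is no argument in the present paper to compare your proposal against. Your sketch is a reasonable reconstruction of the standard proof from that reference---finiteness of the resolution follows because the cells are strictly increasing tuples drawn from a finite set, and the passage to $G$ goes through flatness of the Ore localization ${\Bbb Z}M \hookrightarrow {\Bbb Z}G$---and the expected friction point you identify (checking that ${\Bbb Z}G$ is indeed the Ore localization, hence flat) is the right one.
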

We will describe in a few words the resolution constructed in
\cite{deh_homologie} and we refer the reader for more details.
 Let  $M$ be a locally Gaussian monoid and let $\chi$ be a
 generating subset of $M$, not containing the empty word $\epsilon$, that is closed
 under left and right lcm such that $\chi \bigcup \{\epsilon\}$ is closed under left and right
 complement. Let $<$ be a linear ordering on $\chi$ such that $\alpha < \beta$
holds whenever $\beta$ is a proper right divisor of $\alpha$: this
is possible since right division in $M$ has no cycle.
\begin{defn}\cite{deh_homologie}
For $n\geq 0$, $\chi^{[n]}$ denotes the family of all strictly
increasing $n$-tuples $(\alpha_{1},\alpha_{2},..,\alpha_{n})$ in
$\chi$ such that $\alpha_{1},\alpha_{2},..,\alpha_{n}$ admit a
left lcm. The free ${\Bbb Z}M$-module generated by $\chi^{[n]}$ is
denoted by $C_{n}$ and the generator associated with an element
$A$ of $\chi^{[n]}$ is denoted $[A]$ and it is called an
\textbf{$n-$cell}. The unique $0-$cell is denoted by
$[\emptyset]$. An $n-$cell $[\alpha_{1},\alpha_{2},..,\alpha_{n}]$
is \textbf{descending} if $\alpha_{i+1}$ is a proper right divisor
of $\alpha_{i}$ for each $i$. The submodule of $C_{n}$ generated
by descending  $n-$cells is denoted by $C'_{n}$.
\end{defn}
The boundary maps $\partial_{n}:C_{n} \rightarrow C_{n-1}$ are
explicitly given and it holds that the boundary of a descending
cell consists of descending cells exclusively, that is
$\partial_{n}(C'_{n})\subseteq C'_{n-1}$. The restriction of
$\partial_{n}$ to $C'_{n}$ is denoted by $\partial'_{n}$.
 \begin{prop}\cite[Prop.3.2]{deh_homologie}
 For each locally Gaussian monoid $M$, the subcomplex
 $(C'_{*}, \partial'_{*})$ of $(C_{*},\partial_{*})$ is a finite
 resolution of the trivial ${\Bbb Z}M$-module $\Bbb Z$ by free ${\Bbb
 Z}M$-modules.
 \end{prop}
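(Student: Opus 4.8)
The plan is to check the three parts of the assertion in turn: freeness of each $C'_n$ over $\Bbb Z M$, finiteness of the complex $(C'_*,\partial'_*)$, and exactness as a resolution of $\Bbb Z$.

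Freeness and the subcomplex property are bookkeeping. The module $C_n$ is free on $\chi^{[n]}$ by construction, the descending $n$-cells form a subset of $\chi^{[n]}$, hence $C'_n$ is free over $\Bbb Z M$ on that subset; and the stated fact that $\partial_n$ carries a descending cell to a $\Bbb Z M$-combination of descending cells — which one reads off the explicit formula for $\partial_n$ — says precisely that $\partial_n(C'_n)\subseteq C'_{n-1}$, so $\partial'_n$ is well defined and $(C'_*,\partial'_*)$ is a genuine subcomplex of $(C_*,\partial_*)$.

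Finiteness comes from the poset structure on $\chi$. In a descending cell $[\alpha_1,\dots,\alpha_n]$ each $\alpha_{i+1}$ is a proper right divisor of $\alpha_i$, so $\alpha_1,\dots,\alpha_n$ is a strictly decreasing chain for proper right divisibility on $\chi$; since right division in $M$ has no cycle (as already noted, using atomicity/Noetherianity) this is a strict partial order, and $\chi$ being finite bounds the length of such chains by some $N$. Hence $C'_n=0$ for $n>N$, so the complex is finite. Moreover the unique $0$-cell is vacuously descending and every $1$-cell is vacuously descending, so $C'_0=C_0=\Bbb Z M$ and $C'_1=C_1$; thus $H_0(C'_*)=\operatorname{coker}(\partial'_1)=\Bbb Z M/\mathcal I=\Bbb Z$ with $\mathcal I$ the augmentation ideal, and the augmentation $C'_0\to\Bbb Z$ makes $(C'_*,\partial'_*)\to\Bbb Z$ a complex over $\Bbb Z$.

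The real content — and the step I expect to be the main obstacle — is exactness of $(C'_*,\partial'_*)$ in positive degrees. I would prove it by exhibiting a $\Bbb Z$-linear contracting homotopy $s_n\colon C'_n\to C'_{n+1}$ with $\partial'_{n+1}s_n+s_{n-1}\partial'_n=\operatorname{id}$ for $n\geq 1$, together with the corresponding identity involving the augmentation in degree $0$. The homotopy is built from the divisibility structure of $M$: each $m\neq 1$ in $M$ has a distinguished left divisor in $\chi$, namely the leading term of a greedy normal form (equivalently $m\wedge\Delta$ when a right lcm $\Delta$ of the atoms exists, as it does for the structure groups here), and $s$ acts on a cell $m[\alpha_1,\dots,\alpha_n]$ essentially by prepending that generator to the tuple and adjusting the coefficient, plus correction terms dictated by the explicit boundary formula so that the homotopy identity closes; checking that one stays inside descending cells again uses the combinatorial lemma on $\partial$ of a descending cell. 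Equivalently, one may recall that the ambient complex $(C_*,\partial_*)$ is a free resolution of $\Bbb Z$ and then realize $C'_*$ as a deformation retract of it, by constructing a $\Bbb Z M$-linear retraction $\rho\colon C_*\to C'_*$ sending each cell to the descending cell with the same left lcm obtained by greedy factorization, together with a $\Bbb Z M$-linear chain homotopy $\sigma$ with $\operatorname{id}-\iota\rho=\partial\sigma+\sigma\partial$ and $\rho\iota=\operatorname{id}$. In either formulation the delicate point is the control of signs and $\Bbb Z M$-coefficients forced by the explicit form of $\partial_n$; once the homotopy identities are verified, exactness in positive degrees is immediate, and combined with freeness, finiteness and $H_0=\Bbb Z$ the proposition follows.
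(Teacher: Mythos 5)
First, note that the paper does not prove this statement at all: it is quoted, as background, from Dehornoy--Lafont \cite{deh_homologie} (their Prop.~3.2), and even the boundary maps $\partial_n$ are only said to be ``explicitly given'' there. So your proposal cannot be measured against an argument in the paper; it has to stand on its own or reconstruct the cited proof. The routine parts of your plan are fine: $C'_n$ is free on the descending cells, the (quoted) fact that $\partial_n$ sends descending cells to combinations of descending cells gives the subcomplex property, and the identification of $H_0$ with $\mathbb{Z}$ via the augmentation is correct once one has the degree-one boundary formula.

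The genuine gap is exactness in positive degrees, which is the entire content of the proposition. You announce a $\mathbb{Z}$-linear contracting homotopy built by ``prepending the distinguished $\chi$-divisor and adjusting the coefficient, plus correction terms dictated by the explicit boundary formula so that the homotopy identity closes,'' but you never define $s_n$ nor verify $\partial'_{n+1}s_n+s_{n-1}\partial'_n=\mathrm{id}$; in the cited source this verification is the hard part, carried out by a well-founded induction on cells using the greedy ($\chi$-head) decomposition and word reversing, and it occupies most of their Section~3. Saying that correction terms can be chosen so that the identity closes is exactly what has to be proved, and staying inside descending cells while doing so is not automatic. Your alternative route is no better off: it presupposes that the ambient complex $(C_*,\partial_*)$ is already known to be a free resolution of $\mathbb{Z}$, which is not available here and is essentially the same difficulty, and the retraction $\rho$ and homotopy $\sigma$ are again only postulated. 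A secondary inaccuracy: your finiteness argument uses ``$\chi$ finite'' and the existence of a right lcm $\Delta$ of the atoms, neither of which is part of the locally Gaussian hypothesis; finiteness of the resolution is what the (locally) Garside hypothesis buys (compare the paper's quoted Prop.~2.9--2.10), while for a general locally Gaussian monoid one only gets a resolution, each term free, with descending chains finite by Noetherianity but not uniformly bounded. In the Garside situation the paper actually uses (Corollary \ref{limiter_dimension} together with Theorems \ref{thm_delta_lcm_atoms} and \ref{thm_Garside_length_n}), your bound is recovered from the length of $\Delta$, but as written your argument does not prove the statement as quoted, and the exactness step remains a plan rather than a proof.
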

\begin{cor}\cite[Cor.3.6]{deh_homologie}\label{limiter_dimension}
Assume that $M$ is a  locally Gaussian monoid admitting a
generating set $\chi$ such that $\chi \bigcup \{\epsilon\}$ is closed under left and right complement and
lcm and such that the norm of every element in $\chi$ is bounded
above by $n$. Then the (co)homological dimension of $M$ is at most
$n$.
\end{cor}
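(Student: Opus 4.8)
The plan is to read the bound straight off the finite free resolution $(C'_*, \partial'_*)$ of the trivial ${\Bbb Z}M$-module ${\Bbb Z}$ furnished by \cite[Prop.3.2]{deh_homologie}: I claim the hypothesis on norms forces this resolution to have length at most $n$, after which the bound on the (co)homological dimension of $M$ is immediate from the definition.

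First I would unwind the definition of the modules $C'_m$. By construction $C'_m$ is the free ${\Bbb Z}M$-module on the descending $m$-cells, i.e.\ on the tuples $[\alpha_1,\dots,\alpha_m]$ with $\alpha_i\in\chi$ and $\alpha_{i+1}$ a proper right divisor of $\alpha_i$ for $1\le i<m$. So it suffices to show that no such chain exists once $m>n$, where $n$ is the given upper bound on $\{\,\|\alpha\| : \alpha\in\chi\,\}$.

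Next I would show that proper right division strictly decreases the norm. Since a locally Gaussian monoid is cancellative and left- and right-Noetherian, $1$ is its unique invertible element (an associate-cycle would produce an infinite descending divisibility chain), so every non-trivial element has norm at least $1$. If $\alpha_{i+1}$ is a proper right divisor of $\alpha_i$, write $\alpha_i=\beta\,\alpha_{i+1}$ with $\beta\neq 1$; superadditivity of the norm gives $\|\alpha_i\|\ge\|\beta\|+\|\alpha_{i+1}\|\ge 1+\|\alpha_{i+1}\|$. Iterating along a descending $m$-cell yields $n\ge\|\alpha_1\|\ge\|\alpha_m\|+(m-1)\ge m$, hence $m\le n$. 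Therefore $C'_m=0$ for every $m>n$, so $(C'_*, \partial'_*)$ is a free---in particular projective---resolution of ${\Bbb Z}$ over ${\Bbb Z}M$ of length at most $n$, and consequently the (co)homological dimension of $M$ is at most $n$.

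The main obstacle, such as it is, is the norm-monotonicity step: one must be sure that in a locally Gaussian monoid the norm is meaningful on the relevant elements (superadditive, and at least $1$ on non-units) so that the Noetherian structure converts into the numerical inequality $m\le n$. Once that is in hand, everything else is a direct reading of the definitions of the $C'_*$ and of the notion of (co)homological dimension.
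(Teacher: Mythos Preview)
Your argument is correct and is exactly the intended one: the descending cells $[\alpha_1,\dots,\alpha_m]$ generating $C'_m$ require a strictly decreasing chain of norms inside $\chi$, so the norm bound $n$ forces $C'_m=0$ for $m>n$ and the finite free resolution of \cite[Prop.~3.2]{deh_homologie} has length at most $n$. Note that the present paper does not supply its own proof of this corollary---it is quoted verbatim from \cite{deh_homologie}---so there is nothing further to compare; your write-up simply fills in the argument behind the citation.
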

Using Corollary \ref{limiter_dimension}, we prove the following result:
\begin{thm}
Let $(X,S)$ be  a ``set-theoretical'' solution  of the quantum
Yang-Baxter equation, where $X=\{x_{1},..,x_{n}\}$ and $(X,S)$ is\\
non-degenerate, braided and involutive. Let $G$ be the structure
group corresponding to $(X,S)$.
 Then  the (co)homological dimension of $G$ is bounded from above by $n$, the number of generators in $X$.
\end{thm}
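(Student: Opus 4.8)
The plan is to apply Corollary \ref{limiter_dimension} with the generating set $\chi$ being the set of simple elements of $M$, the monoid with the same presentation as $G$. By Theorem \ref{theo:garside}, $M$ is a Garside monoid, hence in particular a locally Gaussian monoid, and $\chi\cup\{\epsilon\}$ is closed under left and right complement and left and right lcm (this is built into the definition of simple elements together with the fact that a Garside monoid has left--right symmetric structure). So the only thing left to verify is that the norm $\|u\|$ of every simple element $u$ is bounded above by $n=\#X$.

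The key step is therefore the norm bound on simples. Here I would invoke Theorem \ref{thm_delta_lcm_atoms} and Theorem \ref{thm_Garside_length_n}: every simple element divides the Garside element $\Delta$, which is the right lcm of the $n$ atoms $x_1,\dots,x_n$ and has length exactly $n$. Since $M$ is atomic with all defining relations length-preserving, the norm of an element equals the common length of any word representing it; and if $u$ is a left divisor of $\Delta$, writing $\Delta = u v$ in $M$ gives $\|\Delta\| = \|u\| + \|v\|$ by the additivity of length under the length-preserving relations, so $\|u\| \le \|\Delta\| = n$. Thus every simple element has norm at most $n$.

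Having established that $\chi$ is a generating set with $\chi\cup\{\epsilon\}$ closed under the relevant operations and with all norms bounded by $n$, Corollary \ref{limiter_dimension} yields that the (co)homological dimension of $M$ is at most $n$. Finally, since $M$ is Garside and $G$ is its group of fractions, the functor ${\Bbb Z}G\otimes_{{\Bbb Z}M}-$ is exact (as recalled from \cite{deh_homologie}), so applying it to the finite free resolution of $\Bbb Z$ over ${\Bbb Z}M$ produces a finite free resolution of $\Bbb Z$ over ${\Bbb Z}G$ of the same length, giving $\operatorname{cd}(G)\le n$ as well.

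The main obstacle, such as it is, is purely bookkeeping: one must be careful that the generating set demanded by Corollary \ref{limiter_dimension} is exactly the set of simples (not the smaller set $X$), and that all the closure conditions are genuinely satisfied — but these follow from the Garside property already proved, so no serious difficulty arises. The substantive input is really the length computation $\|\Delta\|=n$ from Theorem \ref{thm_Garside_length_n}, which has already been done, together with the elementary observation that length is additive and hence divisors of $\Delta$ cannot be longer than $\Delta$.
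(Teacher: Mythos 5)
Your proposal is correct and takes essentially the same route as the paper: apply Corollary \ref{limiter_dimension} to the set of simples $\chi$, with the norm bound $\|u\|\leq n$ coming from Theorems \ref{thm_delta_lcm_atoms} and \ref{thm_Garside_length_n}. Your additional remarks---that length-preserving relations make the norm additive so divisors of $\Delta$ have length at most $n$, and that exactness of ${\Bbb Z}G\otimes_{{\Bbb Z}M}-$ transfers the bound from $M$ to $G$---simply make explicit steps the paper leaves implicit.
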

\begin{proof}
The set of simples $\chi$ satisfies the conditions of Corollary \ref{limiter_dimension} and the norm of every element in $\chi$ is bounded by $n$, since this is the length of the right lcm of $\chi$ (from Theorems \ref{thm_delta_lcm_atoms} and \ref{thm_Garside_length_n}).
So, the (co)homological dimension of $G$ is bounded from above by $n$.
\end{proof}
\section{The structure group of $(X,S)$ is Garside $\Delta-$pure iff $(X,S)$ is indecomposable}
We refer the reader to sections $2$ and $3$ for the definitions of indecomposable solutions and Garside $\Delta-$pure monoids respectively.
In \cite{etingof}, the authors give a classification of non-degenerate, braided and involutive solutions with $X$ up to $8$ elements, considering their decomposability and other properties.
In \cite{rump}, Rump proves Gateva-Ivanova's conjecture (and also the authors'of \cite{etingof}) that every square-free, non-degenerate, involutive and braided solution is decomposable. Moreover, he constructs an indecomposable solution with $X$ infinite which shows that an extension to infinite $X$ is false.
We find a criteria for decomposability of the solution involving the Garside structure of the structure group (monoid), that is we prove the following result:
\begin{thm}\label{thm_deltapure_indecomp}
Let $G$ be the structure group corresponding to a non-degenerate, braided and involutive solution $(X,S)$ and let  $M$ be the monoid with the same presentation.
Then $M$ is   Garside $\Delta-$pure if and only if $(X,S)$ is indecomposable.
\end{thm}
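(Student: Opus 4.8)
The plan is to translate both notions — $\Delta$-purity of $M$ and indecomposability of $(X,S)$ — into the same combinatorial statement about the right lcm's of generators, and then invoke Proposition \ref{pro_etin_Gtransitiv}. Recall from Definition 3.x that $M$ is $\Delta$-pure iff $\Delta_{x_i}=\Delta_{x_j}$ for all generators $x_i,x_j$, where $\Delta_{a}=\vee\{b\setminus a;\ b\in M\}$. The first step is to compute $\Delta_{x_i}$ explicitly. By Lemma \ref{lem_MXin X} we have $b\setminus x_i\in X\cup\{\epsilon\}$ for every $b\in M$, so $\Delta_{x_i}$ is the right lcm of the subset $Y_i:=\{x_j\in X;\ x_j=b\setminus x_i\text{ for some }b\in M\}\subseteq X$. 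I would first identify $Y_i$ group-theoretically: using Lemma \ref{form_compl}, $x_k\setminus x_i=g_k^{-1}(i)$, and more generally reversing a word $b^{-1}x_i$ amounts to applying a sequence of the maps $g_{\cdot}^{-1}$; since by Proposition \ref{braided_actions}(ii) the assignment $x\mapsto g_x$ extends to a left action of $G$ on $X$, the set $Y_i$ is precisely the orbit of $x_i$ under (the left action of) $G$ on $X$. (One inclusion is the computation just sketched; the other is that each single reversal $x_k\setminus x_i=g_k^{-1}(i)$ already realises the generator $g_{x_k}^{-1}$, and every element of $G$ is a product of such.)

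Given that identification, the argument closes quickly. If $(X,S)$ is indecomposable, then by Proposition \ref{pro_etin_Gtransitiv} the action of $G$ on $X$ is transitive, so $Y_i=X$ for every $i$, hence $\Delta_{x_i}=\vee X=\Delta$ is independent of $i$ and $M$ is $\Delta$-pure. Conversely, if $M$ is $\Delta$-pure, set $\delta:=\Delta_{x_1}=\cdots=\Delta_{x_n}$; then each orbit $Y_i$ has the same right lcm $\delta$. By Theorem \ref{thm_Garside_length_n} (and the length argument behind it, in particular Lemma \ref{lem_compl_egal}) the length of the right lcm of a set of $k$ distinct generators is exactly $k$, so $|Y_i|$ equals the length of $\delta$ for every $i$; call it $k$. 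Thus all $G$-orbits on $X$ have the same size $k$. If $k=n$ there is a single orbit and we are done by Proposition \ref{pro_etin_Gtransitiv}; otherwise $X$ is partitioned into $\ge 2$ orbits, each of which is an invariant subset (invariance under $S$ follows because $S(x,y)=(g_x(y),f_y(x))$ and, on a single orbit, $g_x(y)$ stays in the orbit, while $f_y(x)$ does too by Proposition \ref{braided_actions}(i) together with $T$-invertibility, or more directly because orbits of the left action coincide with orbits of the right action on a finite symmetric set), contradicting indecomposability unless there is only one orbit. Hence $(X,S)$ is indecomposable.

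The step I expect to be the main obstacle is the precise identification $Y_i=\{$ $G$-orbit of $x_i\}$, i.e. showing that the set of all iterated right complements $b\setminus x_i$ with $b\in M$ is exactly the orbit of $x_i$ under the left action $x\mapsto g_x$. The "$\subseteq$" direction needs a clean induction on the reversing diagram using Lemma \ref{formules_complement_lcm} and Lemma \ref{lem_MXin X} to see that each reversal step applies one map $g_{\cdot}^{-1}$; the "$\supseteq$" direction needs that every word in the generators $g_{x_k}^{\pm1}$ of $G$ acting on $X$ can be realised, which follows from Proposition \ref{braided_actions}(ii) once one checks that inverse generators are also covered (they are: $g_{x_k}^{-1}(i)=x_k\setminus x_i$ directly, and positive powers arise by composing such reversals). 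The secondary delicate point is the claim that in a finite non-degenerate symmetric set the orbits of the left action $x\mapsto g_x$ coincide with the invariant subsets of Definition \ref{def_decomposale}; this is implicit in \cite{etingof} via Proposition \ref{prop_eting_YZ} and Proposition \ref{pro_etin_Gtransitiv}, and I would cite it rather than reprove it. Everything else is bookkeeping with the length function already established in Section 5.
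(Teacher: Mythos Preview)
Your identification of $Y_i=\{b\setminus x_i:b\in M\}\setminus\{\epsilon\}$ with the orbit of $x_i$ under the action $x\mapsto g_x^{-1}$ is exactly what the paper does (Remark \ref{rem_interpret_compl_g} and Lemma \ref{transitive_M}), and your ``indecomposable $\Rightarrow$ $\Delta$-pure'' direction coincides with the paper's: transitivity gives $Y_i=X$, hence $\Delta_{x_i}=\vee X=\Delta$ for every $i$.

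The converse direction, however, contains a genuine logical slip. You assume $M$ is $\Delta$-pure and try to deduce indecomposability; after establishing that all orbits have the same size $k$, you write that the case $k<n$ ``contradict[s] indecomposability unless there is only one orbit''. But indecomposability is what you are trying to prove, so this is circular: nothing you have assumed is contradicted by having several orbits of equal size. The argument as written does not close.

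The fix is immediate with the length machinery you already invoke. Since $1\setminus x_i=x_i$, each $x_i$ lies in $Y_i$ and hence left-divides $\delta=\Delta_{x_i}$. Thus \emph{every} generator divides $\delta$, so $\Delta=\vee X$ divides $\delta$; comparing lengths via Theorem \ref{thm_Garside_length_n} gives $n\le |Y_1|\le n$, forcing $Y_1=X$ and a single orbit. The paper instead argues the contrapositive: if $X=Y\sqcup Z$ is a decomposition, Lemma \ref{lem_YYimpliqMY} gives $M\setminus y\subseteq Y$ and $M\setminus z\subseteq Z$, so $\Delta_y$ and $\Delta_z$ are lcm's of disjoint sets of generators and (by the same length argument) cannot coincide. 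Either route works; the paper's contrapositive avoids the orbit-size bookkeeping entirely.
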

\textbf{Example 2:}
Let $X=\{x_{1},x_{2},x_{3}\}$ and let $S(x_{i},x_{j})=(f(j),f^{-1}(i))$, where $f=(1,2,3)$, be a permutation solution. This is an indecomposable solution since $f$ is cyclic (see \cite{etingof}).
The defining relations in $M$ are: $x_{1}^{2}=x_{2}x_{3}$, $x_{2}^{2}=x_{3}x_{1}$ and $x_{3}^{2}=x_{1}x_{2}$.
So, $X \setminus x_{1}=\{x_{3}\}$, $X \setminus x_{2}=\{x_{1}\}$ and $X \setminus x_{3}=\{x_{2}\}$.
Using the reversing diagram, we obtain inductively that  $M \setminus x_{i}=X\bigcap \{\epsilon\}$ for $1 \leq i \leq 3$, that is $M$ is Garside $\Delta-$pure, since $\Delta_{1}=\Delta_{2}=\Delta_{3}$.
As an example, $x_{2}\setminus x_{1}=x_{3}$, so $x_{2}x_{1}\setminus x_{1}=x_{2}$ and so  $x_{2}x_{1}x_{3}\setminus x_{1}=x_{1}$ that is  $X \bigcap \{\epsilon\} \subseteq M \setminus x_{1}$ and since  $M \setminus x_{1}\subseteq X\bigcap \{\epsilon\}$ (see lemma \ref{lem_MXin X}) we have the equality.
\begin{lem}\label{lem_invariantYY}
Let $Y$ be an invariant subset of $X$. Then $Y\setminus Y \subseteq Y$.
\end{lem}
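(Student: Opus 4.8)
The plan is to reduce the statement to the one-step complement formula for generators and then use invariance of $Y$ together with the injectivity of the maps $g_i$. Recall from Lemma \ref{form_compl} that for generators $x_i,x_j\in X$ with $x_i\neq x_j$ one has $x_i\setminus x_j=g_i^{-1}(j)$, an element of $X$, while $x_i\setminus x_i=\epsilon$. So, to prove $Y\setminus Y\subseteq Y$ it suffices to check that $g_i^{-1}(j)\in Y$ whenever $x_i,x_j\in Y$, the diagonal pairs contributing only the empty word.

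First I would note that invariance of $Y$, i.e. $S(Y\times Y)\subseteq Y\times Y$ in the sense of Definition \ref{def_decomposale}, forces $g_i(Y)\subseteq Y$ for every $x_i\in Y$: writing $S(i,y)=(g_i(y),f_y(i))$, the first coordinate must lie in $Y$ as soon as $x_i,x_y\in Y$. Since $X$ is finite, $Y$ is finite; and since $(X,S)$ is non-degenerate, $g_i\colon X\to X$ is injective by Claim \ref{debut_form}(i). An injective self-map of a finite set is a bijection, so $g_i|_Y\colon Y\to Y$ is a bijection, and therefore $g_i^{-1}(j)\in Y$ for every $x_j\in Y$. This is exactly the required inclusion (up to the empty word, consistently with Lemma \ref{lem_MXin X}).

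There is essentially no obstacle here; the only care needed is the bookkeeping around $\epsilon$ on the diagonal, which is why the ambient statement in Lemma \ref{lem_MXin X} carries a $\cup\{\epsilon\}$. For the uses of this lemma in the next section it is worth recording the symmetric fact that the companion complement $x_j\setminus x_i$ also stays in $Y$: with $a=g_i^{-1}(j)\in Y$ one has $x_j\setminus x_i=f_a(i)$, and invariance likewise gives $f_a(Y)\subseteq Y$ while non-degeneracy gives $f_a$ injective, so $f_a|_Y$ is a bijection of the finite set $Y$. Equivalently, one may invoke the remark that a finite invariant subset is automatically non-degenerate, so that $(Y,S|_{Y\times Y})$ is itself a non-degenerate symmetric set and Lemma \ref{form_compl} applies to it verbatim.
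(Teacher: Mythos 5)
Your proof is correct and takes essentially the same route as the paper's one-line argument: invariance gives $S(Y\times Y)\subseteq Y\times Y$, and via the complement formula $x_i\setminus x_j=g_i^{-1}(j)$ of Lemma \ref{form_compl} this keeps $Y\setminus Y$ inside $Y$ (up to $\epsilon$ on the diagonal). You simply make explicit the point the paper leaves implicit, namely that $g_i|_Y$ is a bijection of $Y$ (by injectivity from non-degeneracy plus finiteness of $Y$, equivalently the non-degeneracy of the invariant subset), which is a worthwhile clarification but not a different approach.
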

\begin{proof}
If $Y$ is an invariant subset of $X$, then $S(Y \times Y) \subseteq Y \times Y$, so $Y\setminus Y \subseteq Y$.
\end{proof}
\begin{lem}\label{lem_YYimpliqMY}
Let $(X,S)$ be the union of non-degenerate invariant subsets  $Y$ and $Z$. Then $M\setminus Y \subseteq Y$ and $M\setminus Z \subseteq Z$.
\end{lem}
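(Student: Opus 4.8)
The plan is to first reduce the statement to a claim about complements of \emph{generators}, and then propagate it to all of $M$ by induction on word length using the word reversing calculus. Throughout, recall that $(X,S)$ being non-degenerate and involutive forces the complement of an element of $M$ against a single generator to be a single generator or $\epsilon$ (Lemma~\ref{lem_MXin X}, coming from Claim~\ref{cl_compl} and the fact that the range of $f$ is $X$), so all the complements $u\setminus a$ with $a\in X$ appearing below are single letters or empty.

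First I would prove that $X\setminus Y\subseteq Y\cup\{\epsilon\}$. Let $x_i\in X$ and $x_j\in Y$. If $x_i=x_j$ then $x_i\setminus x_j=\epsilon$. If $x_i\in Y$, then Lemma~\ref{lem_invariantYY} gives $x_i\setminus x_j\in Y$. The remaining case is $x_i\in Z$; here I would use Proposition~\ref{prop_eting_YZ} together with the invariance of $Y$ and $Z$: the four pieces $Y\times Y$, $Y\times Z$, $Z\times Y$, $Z\times Z$ of $X\times X$ are carried by $S$ into $Y\times Y$, $Z\times Y$, $Y\times Z$, $Z\times Z$ respectively (using also that $Y,Z$ are disjoint). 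By Lemma~\ref{form_compl}, $x_i\setminus x_j=g_i^{-1}(j)=:a$, so $S(i,a)=(j,b)$ with $j\in Y$; since $i\in Z$, the pair $(i,a)$ lies in $Z\times Y$ or $Z\times Z$, and the latter would force $j\in Z$, a contradiction. Hence $a\in Y$, i.e. $x_i\setminus x_j\in Y$. This establishes $X\setminus Y\subseteq Y\cup\{\epsilon\}$, and symmetrically $X\setminus Z\subseteq Z\cup\{\epsilon\}$.

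Next I would pass from $X$ to $M$ using the reversing identity, read off the diagram for $(x_k u')^{-1}a=u'^{-1}x_k^{-1}a$,
\[
(x_k u')\setminus a \;=\; u'\setminus(x_k\setminus a)
\]
for $x_k\in X$, $u'\in M$, $a\in X$ (when $x_k\setminus a=\epsilon$, which happens exactly when $x_k=a$, both sides are $\epsilon$). Then induct on the length of a word $u$ representing an element of $M$: if $u=\epsilon$ then $u\setminus a=a\in Y$; if $u=x_k u'$ and $a\in Y$, then by the previous paragraph $x_k\setminus a\in Y\cup\{\epsilon\}$, so $u\setminus a=u'\setminus(x_k\setminus a)$ is either $\epsilon$ (when $x_k\setminus a=\epsilon$) or, by the induction hypothesis applied to the shorter word $u'$ and the generator $x_k\setminus a\in Y$, an element of $Y\cup\{\epsilon\}$. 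This gives $M\setminus Y\subseteq Y\cup\{\epsilon\}$, and by the symmetric argument $M\setminus Z\subseteq Z\cup\{\epsilon\}$.

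The only delicate points are bookkeeping ones: justifying the reversing identity $(x_k u')\setminus a=u'\setminus(x_k\setminus a)$ — for which one reverses $u'^{-1}x_k^{-1}a$, first turning $x_k^{-1}a$ into $(x_k\setminus a)(a\setminus x_k)^{-1}$ and then using that $x_k\setminus a$ is a single letter so reversing continues cleanly to the left — and keeping $\epsilon$ in the bound since $M\setminus Y$ formally contains $1=a\setminus a$. This is the main (mild) obstacle; everything else is a direct application of Lemma~\ref{lem_invariantYY}, Proposition~\ref{prop_eting_YZ}, Lemma~\ref{form_compl} and Lemma~\ref{lem_MXin X}.
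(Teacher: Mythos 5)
Your proposal is correct and follows essentially the same route as the paper: use Lemma \ref{lem_invariantYY} together with Proposition \ref{prop_eting_YZ} (plus disjointness) to get $X\setminus Y\subseteq Y\cup\{\epsilon\}$, then propagate to all of $M$ by induction along the reversing diagram. The extra care you take with the identity $(x_k u')\setminus a=u'\setminus(x_k\setminus a)$ and with keeping $\epsilon$ in the bound only makes explicit what the paper leaves implicit.
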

\begin{proof}
If $Y$ and $Z$ are invariant subsets of $X$, then $Y\setminus Y \subseteq Y$ and $Z\setminus Z \subseteq Z$, from lemma \ref{lem_invariantYY}.
From Proposition \ref{prop_eting_YZ}, it holds that
$S(Y \times Z) \subseteq Z \times Y$ and $S(Z \times Y) \subseteq Y \times Z$, so
$Z\setminus Y \subseteq Y$ and $Y\setminus Z \subseteq Z$.
That is, we have that  $Y\setminus Y \subseteq Y$ and $Z\setminus Y \subseteq Y$, so $X\setminus Y \subseteq Y$ and this implies inductively that $M\setminus Y \subseteq Y$ (see the reversing diagram). The same holds for $Z$.
\end{proof}
\begin{rem}\label{rem_interpret_compl_g}
When $w\setminus x$ is not equal to the empty word, then we can interpret $w\setminus x$ in terms of the functions $g^{-1}_{*}$ using the reversing diagram corresponding to  the words $w=h_{1}h_{2}..h_{k}$ and $x$, where $h_{i},x \in X$ and for brevity of notation we write $g^{-1}_{i}(x)$ for $g^{-1}_{h_{i}}(x)$:\\

$\begin{matrix}
    &&
    \longrightarrow^{h_{1}} && \longrightarrow^{h_{2}}   &..& \longrightarrow^{h_{k}}\\
 & \downarrow^{ x} &&\downarrow^{g^{-1}_{1}(x)}  && \downarrow^{g^{-1}_{2}g^{-1}_{1}(x)} & ..&\downarrow^{g^{-1}_{k}..g^{-1}_{2}g^{-1}_{1}(x)}\\
   &  & \longrightarrow ^{ g^{-1}_{x}(h_{1})} &  &\longrightarrow ^{ g^{-1}_{*}(.)} &..& \longrightarrow^{ g^{-1}_{**}(..)}\\
\end{matrix}$\\
That is, $h_{1}h_{2}..h_{k}\setminus x = g^{-1}_{k}..g^{-1}_{2}g^{-1}_{1}(x)$ and this is equal to $g^{-1}_{w}(x)$, since the action on $X$ is a right action.
Having a glance at the reversing diagram, we can note  that if  $w\setminus x$ is not equal to the empty word, then none of the expressions $g^{-1}_{1}(x)$, $g^{-1}_{2}g^{-1}_{1}(x)$,..,  $g^{-1}_{k}..g^{-1}_{2}g^{-1}_{1}(x)$ can be equal to the empty word.
\end{rem}
 Using exactly the same arguments as in the proof of Proposition \ref{pro_etin_Gtransitiv} in \cite{etingof},  we prove  in the following lemma that if $(X,S)$ is indecomposable then the action of $M$ on $X$ is transitive.
\begin{lem}\label{transitive_M}
Let  $(X,S)$ be a non-degenerate, involutive and braided solution.
 If $(X,S)$ is indecomposable then the action of $M$ on $X$ is transitive.
\end{lem}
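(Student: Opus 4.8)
The plan is to mimic the argument of Proposition \ref{pro_etin_Gtransitiv} (from \cite{etingof}) verbatim, but using the monoid $M$ in place of the group $G$. The key point to check is that the obstruction one might fear—namely that an orbit under $M$ might be a proper subset because $M$ lacks inverses—does not actually arise: the orbit of any point under $M$ will be shown to be an invariant subset of $X$, and then indecomposability forces it to be all of $X$.

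First I would fix $x_{0}\in X$ and let $Y = x_{0}\cdot M$ (the right-$M$-orbit of $x_{0}$ under the right action $x\mapsto f_{x}$, which is a genuine action of $M$ by Proposition \ref{braided_actions}(i) restricted to $M$). I want to show $Y$ is $S$-invariant, i.e. $S(Y\times Y)\subseteq Y\times Y$. Take $y_{1},y_{2}\in Y$, say $S(y_{1},y_{2}) = (g_{y_{1}}(y_{2}), f_{y_{2}}(y_{1}))$. The second coordinate $f_{y_{2}}(y_{1}) = y_{1}\cdot y_{2}$ is clearly in $Y$ since $y_{1}\in Y$ and $Y$ is a union of right-$M$-orbits—actually $Y$ is a single orbit, so closed under the right action. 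For the first coordinate $g_{y_{1}}(y_{2})$ I would use the relation $g_{y_{1}}(y_{2}) = f_{y_{2}}(y_{1})\cdot\,(\text{something})$ coming from the defining relation of the structure group: from $S(y_{1},y_{2})=(g_{y_{1}}(y_{2}),f_{y_{2}}(y_{1}))$ we get $y_{1}y_{2} = g_{y_{1}}(y_{2})\,f_{y_{2}}(y_{1})$ in $M$, hence $g_{y_{1}}(y_{2}) = y_{1}\setminus (\text{the element } g_{y_{1}}(y_{2}))$... more directly, by Lemma \ref{form_compl} with the roles set up correctly, $g_{y_{1}}(y_{2})$ is obtained from $y_{2}$ by an element of $M$ acting, combined with the fact that $f_{y_{2}}(y_{1})\in Y$. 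The cleanest route: since the right action of $M$ on $X$ and the left action (via $g$) are tied together by the relations and by Proposition \ref{Prop_def_T}, one shows $g_{y_{1}}(y_{2})$ lies in the same right-$M$-orbit as $y_{2}$, hence in $Y$. I would spell this out using $T(y)=f_{y}^{-1}(y)$ and $Tg_{y}=f_{y}^{-1}T$ from Proposition \ref{Prop_def_T}, exactly as in \cite{etingof}.

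Once $Y$ is known to be a nonempty invariant subset of $X$, the Remark after Definition \ref{def_decomposale} gives that $Y$ is non-degenerate (as $X$ is finite). If $Y\neq X$, then $X = Y\sqcup(X\setminus Y)$ expresses $X$ as a union of two disjoint nonempty invariant non-degenerate subsets (one checks $X\setminus Y$ is invariant using Proposition \ref{prop_eting_YZ}, which forces $S$ to map $Y\times(X\setminus Y)$ into $(X\setminus Y)\times Y$ and vice versa, so $S((X\setminus Y)^{2})\subseteq (X\setminus Y)^{2}$), contradicting indecomposability. Hence $Y=X$, i.e. $M$ acts transitively on $X$.

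The main obstacle I anticipate is verifying that the first coordinate $g_{y_{1}}(y_{2})$ stays inside the single $M$-orbit $Y$—in the group setting of \cite{etingof} one freely uses $f_{x}^{-1}=f_{x^{-1}}$, whereas here I must produce an honest element of the monoid $M$ that carries $y_{2}$ to $g_{y_{1}}(y_{2})$. The key observation making this work is the one recorded in Remark \ref{rem_interpret_compl_g}: complements $w\setminus x$ that are nonempty are exactly $g_{w}^{-1}(x)$, and these $g^{-1}$-expressions are realized by the right action of elements of $M$ on $X$; combined with the involutivity identities of Claim \ref{debut_form}(ii) relating $g$ and $f$, this lets me exhibit $g_{y_{1}}(y_{2})$ as $y_{2}$ acted on by an element of $M$, completing the invariance argument without ever leaving the monoid.
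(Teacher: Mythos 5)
Your overall strategy (orbit of a point is an invariant subset, then indecomposability forces the orbit to be everything) is a workable direct version of the contrapositive argument the paper actually uses, but as written it has two genuine gaps, and the second is precisely at the point you yourself identify as the crux. First, you never actually produce an element of $M$ carrying $y_{2}$ to $g_{y_{1}}(y_{2})$. What the involutivity identity of Claim \ref{debut_form}(ii) gives is $f_{f_{y_{2}}(y_{1})}\bigl(g_{y_{1}}(y_{2})\bigr)=y_{2}$, i.e.\ an element of $M$ carrying $g_{y_{1}}(y_{2})$ \emph{to} $y_{2}$ --- the wrong direction for a monoid orbit; and Remark \ref{rem_interpret_compl_g} concerns the action $x\mapsto g_{x}^{-1}$ realized by complements, not the $f$-action under which you formed $Y=x_{0}\cdot M$, so invoking it does not place $g_{y_{1}}(y_{2})$ in $Y$. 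The missing ingredient, which must be stated explicitly, is finiteness: each $f_{x}$ (and $g_{x}$) is a bijection of the finite set $X$, so every generator acts as a permutation of finite order, inverses of the action are positive powers, and hence the $M$-orbits coincide with the orbits of the group generated by the $f_{x}$'s. Only after that upgrade do the identities $T(y)=f_{y}^{-1}(y)$ and $f_{x}=Tg_{x}^{-1}T^{-1}$ of Proposition \ref{Prop_def_T} close the orbit under all the maps $g_{y_{1}}$, exactly as in the paper's proof. (Relatedly, the transitivity needed later in Theorem \ref{thm_deltapure_indecomp} is that of the $g^{-1}$-action, $w\setminus x=g_{w}^{-1}(x)$; your $f$-action is only conjugate to it via $T$, so an extra sentence is needed to transfer transitivity.)

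Second, your treatment of the complement is circular: Proposition \ref{prop_eting_YZ} \emph{assumes} that both $Y$ and $Z$ are non-degenerate invariant subsets, so it cannot be used to prove that $X\setminus Y$ is invariant. The correct (and easy) fix is that $X\setminus Y$ is itself a union of orbits, so the same invariance argument applies to it verbatim; equivalently, argue contrapositively as the paper does: if the action of $M$ on $X$ is not transitive, split $X$ into two nonempty disjoint unions of orbits $Y$ and $Z$, both $M$-invariant by construction, deduce invariance under $T^{\pm 1}$ and under all $f_{x}$ and $g_{x}$ from Proposition \ref{Prop_def_T} together with finiteness, conclude that $Y$ and $Z$ are non-degenerate invariant subsets (non-degeneracy being automatic for finite $X$), and contradict indecomposability. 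With the finiteness observation supplied and the complement handled this way, your argument becomes essentially the paper's proof.
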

\begin{proof}
Assume that the action $x \rightarrow g^{-1}_{x}$ of $M$ on $X$  is not transitive.
Then we can construct two  nonempty subsets $Y$ and $Z$ of $X$ such that $Y\bigcap Z =\emptyset$,
$X= Y \bigcup Z$ and $Y$ and $Z$ are $M-$invariant.
It remains to  show that $Y$ and $Z$ are invariant subsets of $X$ and that these are non-degenerate.
These subsets are invariant under $g_{x}$  and hence under $T^{-1}$ and $T$,  since $T^{-1}(x)=g^{-1}_{x}(x)$ (see Proposition \ref{Prop_def_T}).
Since $f_{x}=Tg^{-1}_{x}T^{-1}$ from Proposition \ref{Prop_def_T}, $Y$ and $Z$ are also invariant under $f_{x}$.
So,   $Y$ and $Z$ are invariant subsets of $X$.
These subsets are non-degenerate since the functions $f_{y},g_{y}$ for $y\in Y$ and $f_{z},g_{z}$ for $z\in Z$ are bijective ($(X,S)$ is non-degenerate).
\end{proof}
\textbf{Proof of Theorem \ref{thm_deltapure_indecomp}}
\begin{proof}
Assume that $(X,S)$ is decomposable, that is $(X,S)$ is the union of non-degenerate invariant subsets  $Y$ and $Z$.
From lemma \ref{lem_YYimpliqMY}, we have $M\setminus Y \subseteq Y$ and $M\setminus Z \subseteq Z$.
Let $y\in Y$ and $z\in Z$, then $\Delta_{y}= \vee (M\setminus y)$  cannot be the same as $\Delta_{z}= \vee (M\setminus z)$. So, $M$ is not $\Delta-$pure.\\
Assume that $(X,S)$ is indecomposable and we need to show that $M$ is Garside $\Delta-$pure. We prove for every $x \in X$ that $M\setminus x=X$ and this implies that $\Delta_{x}= \vee (M\setminus x)=\Delta$. Clearly $M\setminus x \subseteq X$, so it remains to show that  for every $y \in X$ there is an element  $w$ in $M$ such that $w\setminus x=y$.
Since $(X,S)$ is indecomposable, from Proposition \ref{braided_actions} and lemma \ref{transitive_M}, we have that the assignment $x \rightarrow g^{-1}_{x}$ is a transitive right action of $M$ on $X$, that is for every $x,y \in X$, there is an element $w \in M$ such that $g^{-1}_{w}(x)=y$.
That means that   for every $y \in X$ there is an element  $w$ in $M$ such that $w\setminus x=y$.
\end{proof}
\section{The converse implication: from 'tableau' Garside monoids to structure groups}
We prove the following:
\begin{thm}\label{garside_struct_group}
Let $M=\operatorname{Mon} \langle X\mid R \rangle$ be a  Garside monoid such that:
1) $X=\{x_{1},..,x_{n}\}$.\\
2) There are $n(n-1)/2$ relations in $R$.\\
3) Each side of a relation in $R$ has length 2.\\
4) If the  word $x_{i}x_{j}$ appears in $R$, then it appears only once.\\
 Then there exists a function $S: X \times X \rightarrow X \times X$ such that $(X,S)$ is  a non-degenerate, involutive and braided ``set-theoretical'' solution and $G=\operatorname{Gp} \langle X\mid R \rangle$ is its structure group.\\
 If in  addition: 5) There is no  word $x_{i}^{2}$  in $R$.\\
Then  $(X,S)$ is also square-free.
 \end{thm}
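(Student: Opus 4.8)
The plan is to run the correspondence between complements and the functions defining a solution backwards. Given such a Garside monoid $M=\operatorname{Mon}\langle X\mid R\rangle$, I first observe that conditions (1)--(4) force the defining relations to look exactly like the relations coming from a map $S:X\times X\to X\times X$: by (3) every relation is $x_ix_j=x_kx_l$, by (4) each word of length two appears at most once as a left-hand (or right-hand) side, and a counting argument using (2) shows that among the $n(n-1)/2$ relations together with the $n$ ``trivial'' identities $x_ix_i=x_ix_i$ we account for exactly the $\binom{n}{2}+n=\binom{n+1}{2}$ unordered pairs $\{x_i,x_j\}$ on the left and similarly on the right --- so the assignment $(i,j)\mapsto(k,l)$ whenever $x_ix_j=x_kx_l$ (and $(i,i)\mapsto(i,i)$ otherwise) is a well-defined map. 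I would then define $S$ to be this map. The first real task is to check $S$ is a \emph{bijection}: since $M$ is Garside it is left and right cancellative, and cancellativity together with (4) rules out two relations sharing a left-hand side or sharing a right-hand side, which gives injectivity of $S$; injectivity on a finite set gives bijectivity.

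Next I would extract the functions $f_\bullet,g_\bullet$ by writing $S(i,j)=(g_i(j),f_j(i))$, i.e.\ $g_i(j):=$ first component of $S(i,j)$ and $f_j(i):=$ second component. \emph{Non-degeneracy} is the statement that each $g_i$ and each $f_j$ is a bijection of $X$; I would derive this from the Garside structure of $M$ via the complement. Concretely, by Lemma~\ref{form_compl}'s computation in reverse: in a tableau monoid satisfying these hypotheses, $x_i\setminus x_j$ is a single generator (by (3) and the fact that $x_i\vee x_j$ has a length-two representative, which is where Garsideness --- existence of lcms --- is used), and the map $x_j\mapsto x_i\setminus x_j$ is exactly $j\mapsto g_i^{-1}(j)$; right cancellativity of $M$ forces this map to be injective, hence $g_i$ is a bijection. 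A symmetric argument with left complements (using the left lcm, which exists since $M$ is Garside hence Gaussian) handles the $f_j$'s. This is the step I expect to carry the most weight: I must be careful that ``each side of a relation has length $2$'' plus Garside really does force every pairwise right lcm to have length exactly $2$ and not more, so that the complements stay inside $X\cup\{\epsilon\}$; the argument should mirror the computations of Section~5 (Proposition~\ref{prop_Simples} and its lemmas) read in the opposite direction.

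Having non-degeneracy and the $f,g$ decomposition, I would obtain \emph{involutivity} and the \emph{braid relation} essentially for free from the monoid relations. Involutivity, $S^2=\mathrm{id}$, translates (via Claim~\ref{debut_form}(ii)) into the identities $g_{g_i(j)}f_j(i)=i$ and $f_{f_j(i)}g_i(j)=j$; but these say precisely that if $x_ix_j=x_kx_l$ is a relation then so is $x_kx_l=x_ix_j$, which holds because relations in a monoid presentation are symmetric --- combined with condition (4) guaranteeing the relation $x_kx_l=(\cdot)$ is the \emph{only} one with left side $x_kx_l$, so it must be the reverse of the original. For the braid relation I would invoke the coherence of the complement $f$ associated to $M$: since $M$ is Garside it satisfies $(C_1)$ and $(C_2)$, and by Proposition~\ref{atomic_coh} (the monoid is atomic, all relations being length-preserving) coherence on $X$ holds; unwinding coherence at triples of generators via Lemma~\ref{form_compl} gives exactly $g_ig_{g_i^{-1}(k)}=g_kg_{g_k^{-1}(i)}$, i.e.\ one of the braid identities of Claim~\ref{debut_form}(iii), and the remaining identities follow by the same bookkeeping applied to left complements, or by using the full cube condition that a Garside monoid's presentation satisfies. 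Then Claim~\ref{debut_form} certifies that $(X,S)$ is non-degenerate, involutive and braided, and since $R$ is by construction the set of relations $x_ix_j=tz$ with $S(x_i,x_j)=(t,z)$, Definition~\ref{def_rel_struct} gives that $G=\operatorname{Gp}\langle X\mid R\rangle$ is its structure group. Finally, for the last sentence: if additionally (5) holds, no relation is $x_i^2=x_jx_k$ with $\{j,k\}\neq\{i\}$, and since $S(i,i)=(i,i)$ by construction this means $g_i(i)=i$ for all $i$; one checks this is exactly the definition of $(X,S)$ being square-free, so the ``in addition'' claim follows immediately.
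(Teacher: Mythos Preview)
Your overall architecture matches the paper's: define $S$ from the tableau, read off $g_i,f_j$, get non-degeneracy from uniqueness of lcm's, get involutivity from condition~(4), and deduce braidedness from right and left coherence of the Garside monoid. Two points deserve correction or amplification.

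First, a small slip: the $n$ words of length~$2$ that do \emph{not} occur in $R$ need not be the squares $x_ix_i$. Indeed the paper's Example~1 has $x_1^2=x_2^2$ among the relations. The correct definition (which the paper uses) is simply $S(i,j)=(i,j)$ whenever $x_ix_j$ is absent from $R$, with no claim about which pairs those are. Your counting then works at the level of ordered pairs: $n(n-1)/2$ relations consume $n(n-1)$ distinct words by~(4), leaving exactly $n$ fixed by $S$. Relatedly, for non-degeneracy the paper's argument is crisper than yours: if $g_i(j)=g_i(j')=k$ then $x_ix_j=x_kx_l$ and $x_ix_{j'}=x_kx_{l'}$ are two distinct length-$2$ common right multiples of $x_i,x_k$, contradicting uniqueness of the lcm in a Garside monoid; invoking ``right cancellativity'' for injectivity of $j\mapsto x_i\setminus x_j$ is less direct.

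Second, and more substantively, you underestimate the work hiding in ``the remaining identities follow by the same bookkeeping''. Right coherence on $X$ only gives $g^{-1}_{g_i^{-1}(k)}g_i^{-1}(m)=g^{-1}_{g_k^{-1}(i)}g_k^{-1}(m)$ for \emph{pairwise distinct} $i,k,m$; extending this to the full functional identity $g_ig_{g_i^{-1}(k)}=g_kg_{g_k^{-1}(i)}$ requires a separate case analysis (the paper's Lemma~\ref{cor_Xcoherent_equations}) that genuinely uses involutivity to rule out the ``swap'' possibility on the two missing values. More importantly, even once you have the $g$-identity and its $f$-analogue from left coherence, the \emph{linking relation} $f_{g_{f_l(m)}(j)}g_m(l)=g_{f_{g_l(j)}(m)}f_j(l)$ of Claim~\ref{debut_form}(iii) does not drop out by symmetry: the paper devotes Proposition~\ref{coherence_implies_braided} to deriving it, via a substitution using Lemma~\ref{formule_gauche} and the involutivity identity $f_{f_l(m)}g_m(l)=l$. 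This is the one genuinely computational step in the converse direction, and your proposal should acknowledge it rather than fold it into ``bookkeeping''.
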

In order to prove Theorem \ref{garside_struct_group}, we need to introduce the concept of left coherence and some new terminology.
The following definitions are taken from \cite{deh_francais} and \cite{deh_homologie}, but we do not use exactly the same notations.
\begin{defn}
Let $M$ be a monoid and let $x,y$ be elements in $M$. We say that $x$ is \textbf{a right divisor} of $y$ if there is an element $t$ in $M$
such that $y=tx$ and  $x$ is \textbf{a proper} right divisor of $y$ if
in addition $t \neq 1$.
\end{defn}
\begin{defn}
Let $M$ be a monoid and let $x,y$ be elements in $M$. Say that $z$
is
\textbf{a left least common multiple or left lcm} of $x$ and $y$ if:\\
  $1.$ $x$ and $y$ are right divisors of $z$ \\
  $2.$ if there is an element $w$ in $M$ such that $x$ and $y$ are right divisors of $w$, then
   $z$ is  right divisor of $w$.
   The notation is $z=x\widetilde{\vee} y$.\\
  \textbf{The complement at left of
   $y$ on $x$} is defined to be an element $c\in M$ such that $x\widetilde{\vee} y= cx$ and the notation is
   $c=y \widetilde{\setminus} x$ and $x\widetilde{\vee} y=(x \widetilde{\setminus} y)y$.
\end{defn}
\begin{defn}
Let $M$ be a monoid. The \textbf{left coherence} on $M$ is satisfied if it holds for any $x,y,z \in M$:\\
$((x\widetilde{\setminus} y)\widetilde{\setminus} (z\widetilde{\setminus} y))\widetilde{\setminus} ((x\widetilde{\setminus} z)\widetilde{\setminus} (y\widetilde{\setminus} z))\equiv^{+} \epsilon$
It is also called the \textbf{left cube condition}.
\end{defn}
We show that if $(X,S)$ is a non-degenerate and  involutive ``set-theoretical'' solution, then  $(X,S)$ is braided if and only if  $X$ is coherent and  left coherent.
 The left coherence on $X$  is satisfied if the following condition on all  $x_{i},x_{j},x_{k}$ in $X$ is satisfied:\\
$(x_{i}\widetilde{\setminus} x_{j})\widetilde{\setminus} (x_{k}\widetilde{\setminus} x_{j})= (x_{i}\widetilde{\setminus} x_{k})\widetilde{\setminus} (x_{j}\widetilde{\setminus} x_{k})$,\\
where the equality is in the free monoid since  the complement on the left is totally defined and  its range is $X$ (from lemma \ref{lem_cancel}). Note that as  in the proof of the coherence,  left coherence on $X$ implies left coherence on $M$, since the monoid $M$ is atomic.\\
Clearly, the following implication is derived from Theorem \ref{theo:garside}:
\begin{lem}\label{braided_implies_leftcoherent}
Assume  $(X,S)$ is non-degenerate and  involutive.\\ If  $(X,S)$ is braided, then $X$ is coherent and left coherent.
\end{lem}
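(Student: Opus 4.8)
The statement asserts that, for a non-degenerate involutive solution $(X,S)$, braidedness forces both right coherence (already established in Proposition \ref{M_c1&c2}) and left coherence of $X$. Since the right-coherence half is exactly Proposition \ref{M_c1&c2}, the plan is to concentrate on the left-coherence identity
$(x_{i}\widetilde{\setminus} x_{j})\widetilde{\setminus} (x_{k}\widetilde{\setminus} x_{j})= (x_{i}\widetilde{\setminus} x_{k})\widetilde{\setminus} (x_{j}\widetilde{\setminus} x_{k})$
for all triples of generators. My strategy is to mirror, on the left, the computation done for right coherence. First I would derive the left analogue of Lemma \ref{form_compl}: using Lemma \ref{lem_cancel} (there is a unique relation $x_{i}a=x_{j}a'$ with $i\neq j$ prescribed by $a,a'$), one shows that for different $x_{i},x_{j}$ the left complement is $x_{i}\widetilde{\setminus} x_{j}=x_{f_{?}^{-1}(?)}$-type expression; concretely, writing $S(i',j)=(k,l)$, the left complement $x_{k}\widetilde{\setminus} x_{l}$ of $x_l$ on $x_k$ equals $x_{i'}$ where $i'$ is determined by inverting the appropriate $f$-map, so that $x_{k}\widetilde{\setminus} x_{l}$ is expressed via $f^{-1}_{\bullet}$ just as $x_{i}\setminus x_{j}=g^{-1}_{i}(j)$ was expressed via $g^{-1}_{\bullet}$.

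Once the left complement is written in terms of the maps $f_\bullet$, the left-coherence identity on generators unwinds into an equation of the shape
$f^{-1}_{f^{-1}_{?}(?)}f^{-1}_{?}(\ast)=f^{-1}_{f^{-1}_{?}(?)}f^{-1}_{?}(\ast)$,
the precise left-handed counterpart of the equation $g^{-1}_{g_{i}^{-1}(k)}g_{i}^{-1}(m)=g^{-1}_{g_{k}^{-1}(i)}g_{k}^{-1}(m)$ that appeared in the proof of Proposition \ref{M_c1&c2}. As there, it suffices to prove the unindexed relation $f_{i}f_{f_{i}^{-1}(k)}=f_{k}f_{f_{k}^{-1}(i)}$, which follows from the braid relation $f_{j}f_{i}=f_{f_{j}(i)}f_{g_{i}(j)}$ of Claim \ref{debut_form}(iii) after substituting $i\mapsto f_{i}^{-1}(k)$, together with a left-handed version of Lemma \ref{formule} — i.e. an identity $f^{-1}_{k}(i)=g_{f^{-1}_{i}(k)}(i)$ or the analogous statement obtained from the involutivity relations $f_{f_{j}(i)}g_{i}(j)=j$ in Claim \ref{debut_form}(ii). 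Atomicity of $M$ (Claim \ref{complement_f_defined}) then upgrades left coherence on $X$ to left coherence on all of $X^{*}$, exactly as in the right-coherent case via Proposition \ref{atomic_coh}.

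The main obstacle is bookkeeping rather than conceptual: one must be careful about which of the two components of $S$ (and which direction of inversion) governs the \emph{left} complement, since the left complement peels a generator off the \emph{front} of a relation while the right complement peels one off the back. Getting the indices right in the left analogues of Lemmas \ref{form_compl} and \ref{formule}, and matching them against the correct one of the three braid identities in Claim \ref{debut_form}(iii), is where the care is needed; after that the algebra is a verbatim transcription of the proof of Proposition \ref{M_c1&c2}. An alternative, cleaner route that avoids re-deriving everything by hand is to invoke the symmetry of the situation: the map $x_i\mapsto x_i$ together with the involutivity relation $S^{2}=\mathrm{id}$ identifies the "opposite" solution (defined by $S^{\mathrm{op}}=\alpha S\alpha$, or equivalently by swapping the roles of $f$ and $g$) as again a non-degenerate involutive braided solution, whose right complements are the original left complements; right coherence for the opposite solution, which is Proposition \ref{M_c1&c2} applied to it, is precisely left coherence for the original. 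I would present the index-chasing version as the primary argument and remark on the opposite-solution shortcut.
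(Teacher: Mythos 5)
Your argument is correct, but it takes a genuinely different route from the paper. The paper's own proof is a two-line citation: since $(X,S)$ is braided (and non-degenerate, involutive), Theorem \ref{theo:garside} already shows the monoid $M$ is Garside, and a Garside (Gaussian) monoid satisfies both the right and the left cube condition by the general theory (the references to \cite{deh_homologie} and \cite{deh_francais}), so coherence and left coherence of $X$ follow at once. You instead verify left coherence by hand, mirroring Proposition \ref{M_c1&c2}: expressing the left complement as $x_{j}\widetilde{\setminus}x_{i}=f_{i}^{-1}(j)$, reducing the cube identity on generators to $f_{j}f_{f_{j}^{-1}(k)}=f_{k}f_{f_{k}^{-1}(j)}$, and deducing this from the braid identity $f_{j}f_{i}=f_{f_{j}(i)}f_{g_{i}(j)}$ together with the involutivity-derived identity $f_{k}^{-1}(i)=g_{f_{i}^{-1}(k)}(i)$. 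This is sound, and in fact the auxiliary facts you re-derive are exactly the paper's Lemmas \ref{compl_gauche} and \ref{formule_gauche} and equation (A) of Lemma \ref{cor_Xcoherent_equations}, which the paper states only afterwards and uses in the \emph{converse} direction (coherence implies braided); your computation is the same algebra run the other way. What the paper's route buys is brevity and no index-chasing; what yours buys is a self-contained, purely combinatorial verification that does not pass through the full Garside machinery, and it makes the equivalence between braidedness and the two cube conditions on $X$ completely explicit. Your alternative shortcut via the opposite solution $S^{\mathrm{op}}=\alpha S\alpha$ is also legitimate (its structure monoid is the opposite monoid of $M$, so right coherence for it is left coherence for $M$), but it requires the small additional check that $S^{\mathrm{op}}$ is again braided — e.g. via the standard fact that $R_{21}$ satisfies the QYBE when $R$ does — which you should state rather than leave implicit. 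One last cosmetic point: Proposition \ref{atomic_coh} as stated covers right coherence, so the upgrade from left coherence on $X$ to left coherence on $X^{*}$ needs the symmetric (opposite-monoid) version; since the lemma only claims left coherence of $X$, this is not a gap in what you were asked to prove.
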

\begin{proof}
Assume $(X,S)$ is braided, then from Theorem \ref{theo:garside}, the structure group corresponding to $(X,S)$ is Garside since  the monoid with the same presentation is Garside. So, $X$ is coherent and left coherent (see for example  \cite[Prop.1.4]{deh_homologie} and \cite[Lemma1.7]{deh_francais}).
 \end{proof}
The proof of the converse implication is less trivial and requires a lot of calculations. Before we proceed, we need first express the left complement in terms of the functions $f_{*}$, some of the calculations are symmetric to those done in Section 4.2 with the right complement.
\begin{lem}\label{compl_gauche}
 Assume $(X,S)$ is non-degenerate.\\
 Let $x_{i},x_{j}$ be different elements in $X$.\\
Then $x_{j}\widetilde{\setminus} x_{i}=f_{i}^{-1}(j)$.\\
Note that $x_{j}\widetilde{\setminus} x_{j}=\epsilon$.
\end{lem}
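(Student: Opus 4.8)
The plan is to compute $x_j\widetilde{\setminus}x_i$ directly from the defining relations of $M$, exactly the way the right complement $x_i\setminus x_j=g_i^{-1}(j)$ was computed in Lemma \ref{form_compl}, but with the bijections $g_i$ replaced by the bijections $f_i$. Recall that, read on generators, $x_j\widetilde{\setminus}x_i$ is the left factor $x_b$ of the (length $2$) defining relation $x_bx_i=x_ax_j$ one side of which ends in $x_i$ and the other in $x_j$; as noted after Lemma \ref{lem_cancel}, under non-degeneracy the left complement is totally defined on $X$ with range in $X$, so such a relation exists and is unique.

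First I would identify that relation. Non-degeneracy gives, via Claim \ref{debut_form}(i), that each $f_i$ is a bijection of $X$, so we may set $b=f_i^{-1}(j)$. Then $S(b,i)=(g_b(i),f_i(b))=(g_b(i),j)$, hence $x_bx_i=x_{g_b(i)}x_j$ is a defining relation of $M$, ending in $x_i$ on the left side and in $x_j$ on the right side. Conversely, if $x_{b'}x_i=x_{a'}x_j$ is any defining relation then $S(b',i)=(a',j)$, so $f_i(b')=j$ and therefore $b'=f_i^{-1}(j)=b$ (and then $a'=g_b(i)$). Note that, unlike in Lemma \ref{lem_cancel}, involutivity is not needed here: it was used there only to conclude that the two left factors of the relation are distinct, which plays no role now.

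Consequently $x_j\widetilde{\setminus}x_i=x_b=x_{f_i^{-1}(j)}$, that is $x_j\widetilde{\setminus}x_i=f_i^{-1}(j)$ after the usual identification of $X$ with $\{1,\dots,n\}$; and the degenerate case $x_i=x_j$ gives $x_i\widetilde{\vee}x_i=x_i=\epsilon\cdot x_i$, so $x_j\widetilde{\setminus}x_j=\epsilon$, as asserted. There is no real obstacle in the argument — it is essentially the left-hand analogue of Lemma \ref{form_compl} — and the only point requiring attention is bookkeeping: keeping straight which side of the relation ends in $x_i$ and which in $x_j$, so that it is $f_i$, and not $g_i$, that gets inverted.
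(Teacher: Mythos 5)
Your argument is correct and is essentially the paper's own proof: both identify the defining relation $x_bx_i=x_ax_j$ via $S(b,i)=(g_b(i),f_i(b))$ and read off $f_i(b)=j$, hence $b=f_i^{-1}(j)$. Your version merely adds the explicit existence/uniqueness check of that relation, which the paper leaves implicit.
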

\begin{proof}
If $S(a,i)=(b,j)$, then $x_{j}\widetilde{\setminus} x_{i}=a$.
But by definition of $S$, we have that $S(a,i)=(g_{a}(i),f_{i}(a))$,
 so it holds that $f_{i}(a)=j$
 which gives $a=f_{i}^{-1}(j)$.
\end{proof}
\begin{lem}\label{formule_gauche}
Assume $(X,S)$ is non-degenerate and  involutive.\\ Let $x_{i}, x_{k}$ be elements in $X$.\\
Then $f^{-1}_{k}(i)=g_{f^{-1}_{i}(k)}(i)$.
\end{lem}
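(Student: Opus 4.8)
The plan is to mirror the argument used for Lemma~\ref{formule}, but starting from the \emph{second} involutive identity in Claim~\ref{debut_form}(ii) rather than the first. Recall that $S$ being involutive gives, after relabelling with fresh indices $1\le p,q\le n$, the relation $f_{f_{q}(p)}\,g_{p}(q)=q$ (this is the identity $f_{f_{j}(i)}g_{i}(j)=j$ of Claim~\ref{debut_form}(ii)). Non-degeneracy (Claim~\ref{debut_form}(i)) will be used to guarantee that the maps $f_{k}$ are bijective, so that we may invert them.

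The key step is choosing the substitution so that the double subscript $f_{f_{q}(p)}$ collapses. I would put $p=f_{i}^{-1}(k)$ and $q=i$. Then $f_{q}(p)=f_{i}\bigl(f_{i}^{-1}(k)\bigr)=k$, so $f_{f_{q}(p)}=f_{k}$, and $g_{p}(q)=g_{f_{i}^{-1}(k)}(i)$; the involutive identity therefore reads $f_{k}\bigl(g_{f_{i}^{-1}(k)}(i)\bigr)=i$. Applying $f_{k}^{-1}$ to both sides yields $g_{f_{i}^{-1}(k)}(i)=f_{k}^{-1}(i)$, which is exactly the claimed formula.

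There is essentially no obstacle here: the whole content is the choice of substitution together with one application of non-degeneracy to invert $f_{k}$. The only points requiring care are the index clash with the statement of Claim~\ref{debut_form}(ii) (which must be relabelled before substituting, as above) and checking that every map in sight is genuinely invertible, which non-degeneracy supplies. Combined with Lemma~\ref{compl_gauche}, this lemma will play on the left side (expressing $\widetilde{\setminus}$ in terms of the $f_{*}$) the role that Lemmas~\ref{form_compl} and~\ref{formule} played on the right side in the proof of Proposition~\ref{M_c1&c2}, and it is precisely the ingredient needed to establish left-coherence on $X$, i.e.\ the converse half of the equivalence announced just before Lemma~\ref{braided_implies_leftcoherent}.
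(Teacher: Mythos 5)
Your proposal is correct and follows essentially the same route as the paper: the paper's proof likewise takes the second involutive identity of Claim \ref{debut_form}(ii) (written with the roles of the two indices exchanged), substitutes $f_{i}^{-1}(k)$ for the free index so that the outer subscript collapses to $f_{k}$, and then inverts $f_{k}$, which non-degeneracy permits.
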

\begin{proof}
$S$ is involutive so we have from claim \ref{debut_form} that for every $x_{i},x_{j}\in X$, $f_{f_{i}(j)}g_{j}(i)=i$.\\
Let replace in this formula  $j$  by $f^{-1}_{i}(k)$, then we obtain \\
$i= f_{f_{i}(f^{-1}_{i}(k))}g_{f^{-1}_{i}(k)}(i)=f_{k}g_{f^{-1}_{i}(k)}(i)$\\
So, we have $f^{-1}_{k}(i)=g_{f^{-1}_{i}(k)}(i)$.
\end{proof}
\begin{lem}\label{cor_Xcoherent_equations}
Assume $(X,S)$ is non-degenerate.\\
If  $X$ is coherent and left coherent,  then for every $i,j,k$ the following equations hold:\\
(A) $f_{j}f_{f^{-1}_{j}(k)}=f_{k}f_{f_{k}^{-1}(j)}$\\
(B) $ g_{i}g_{g_{i}^{-1}(k)}=g_{k}g_{g_{k}^{-1}(i)}$
\end{lem}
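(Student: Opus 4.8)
The plan is to prove (B) from the coherence of $X$ and (A) from the left coherence of $X$; the two arguments are mirror images, so I will spell out (B) and only indicate the changes for (A). A preliminary observation first: since every defining relation of $M$ is length-preserving, $M$ is atomic, so by Proposition~\ref{atomic_coh} the coherence of $X$ upgrades to coherence of $f$ on all of $X^{*}$, word reversing is confluent (Proposition~\ref{coh_c1,2}), and the complement words $x_{a}\setminus x_{b}$ are well-defined and lie in $X\cup\{\epsilon\}$ by Claim~\ref{complement_f_defined}(a); the analogous statements for the left complement hold by the symmetric argument, using Lemma~\ref{lem_cancel}. In particular the coherence relation ``$\equiv^{+}\epsilon$'' evaluated on a triple of generators becomes a genuine equality in the free monoid $X^{*}$, exactly as in the proof of Proposition~\ref{M_c1&c2}.

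For (B), assume $i,k$ distinct (otherwise (B) is trivial), pick any $m\notin\{i,k\}$, and instantiate the coherence of $X$ at $(x_{i},x_{k},x_{m})$:
\[
(x_{i}\setminus x_{k})\setminus(x_{i}\setminus x_{m})=(x_{k}\setminus x_{i})\setminus(x_{k}\setminus x_{m}).
\]
By Lemma~\ref{form_compl} the left side equals $g_{x_{i}\setminus x_{k}}^{-1}(x_{i}\setminus x_{m})=g_{g_{i}^{-1}(k)}^{-1}\bigl(g_{i}^{-1}(m)\bigr)$, the use of Lemma~\ref{form_compl} in the last step being legitimate because $g_{i}^{-1}(k)\neq g_{i}^{-1}(m)$ (injectivity of $g_{i}$); symmetrically the right side equals $g_{g_{k}^{-1}(i)}^{-1}\bigl(g_{k}^{-1}(m)\bigr)$. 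Hence $(g_{i}g_{g_{i}^{-1}(k)})^{-1}(m)=(g_{k}g_{g_{k}^{-1}(i)})^{-1}(m)$ for every $m\notin\{i,k\}$, so the two bijections $g_{i}g_{g_{i}^{-1}(k)}$ and $g_{k}g_{g_{k}^{-1}(i)}$ of the finite set $X$ have inverses agreeing on $X\setminus\{i,k\}$. Granting the equality at the two remaining points (see below), this yields $g_{i}g_{g_{i}^{-1}(k)}=g_{k}g_{g_{k}^{-1}(i)}$, which is (B). The proof of (A) is word for word the same, with $\setminus$ replaced by $\widetilde{\setminus}$, left coherence in place of coherence, and Lemma~\ref{compl_gauche} (giving $x_{a}\widetilde{\setminus}x_{b}=f_{b}^{-1}(a)$) in place of Lemma~\ref{form_compl}; it produces $f_{j}f_{f_{j}^{-1}(k)}=f_{k}f_{f_{k}^{-1}(j)}$.

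The only step that is not pure bookkeeping, and hence the main obstacle, is closing the gap at the two exceptional indices: two bijections of a finite set agreeing off a $2$-element set either coincide or differ there by the transposition of those two points, and the complement formulas of Lemmas~\ref{form_compl} and~\ref{compl_gauche} degenerate on the diagonal, so they give no information there. In the setting where the lemma is applied $(X,S)$ is involutive, and one disposes of the ambiguity directly using the map $T$ of Proposition~\ref{Prop_def_T}: since $g_{y}^{-1}(y)=T^{-1}(y)$ and $g_{y}^{-1}T^{-1}=T^{-1}f_{y}$ (both immediate from Proposition~\ref{Prop_def_T}(i)--(ii)), one computes $(g_{i}g_{g_{i}^{-1}(k)})^{-1}(k)=T^{-1}\bigl(g_{i}^{-1}(k)\bigr)$ and $(g_{k}g_{g_{k}^{-1}(i)})^{-1}(k)=T^{-1}\bigl(f_{g_{k}^{-1}(i)}(k)\bigr)$, and these are equal because Lemma~\ref{formule} (with the roles of $i$ and $k$ interchanged) says $g_{i}^{-1}(k)=f_{g_{k}^{-1}(i)}(k)$; the value at $i$ then follows by bijectivity. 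The parallel computation for (A) uses Lemma~\ref{formule_gauche}. With this the two bijections agree on all of $X$, and (A) and (B) follow.
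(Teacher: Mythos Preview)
Your argument is correct and follows the paper's overall strategy exactly: translate (left) coherence on generators via Lemmas~\ref{form_compl} and~\ref{compl_gauche} into the equality of the two composite bijections on $X\setminus\{i,k\}$ (resp.\ $X\setminus\{j,k\}$), then use involutivity to rule out the remaining transposition ambiguity. The only substantive difference is in this last step. The paper argues by contradiction: assuming the ``swap'' case $F_{1}(k)=F_{2}(j)=m$, it derives $f_{a}(m)=a$ and $f_{b}(m)=b$ (with $a=f_{j}^{-1}(k)$, $b=f_{k}^{-1}(j)$), then uses involutivity to get $g_{m}(a)=g_{m}(b)=m$, forcing $a=b$ and ultimately a relation of the shape $x_{l}x_{j}=x_{l}x_{k}$ with $j\neq k$, which is impossible by Remark~\ref{rem_cancel}. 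You instead compute the value at $k$ directly using the map $T$ of Proposition~\ref{Prop_def_T} together with Lemma~\ref{formule}; this is arguably cleaner and avoids the case analysis.

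One point worth making explicit (you do flag it, correctly): both your proof and the paper's proof use involutivity, although the lemma as stated only assumes non-degeneracy. The appeals to Proposition~\ref{Prop_def_T} and Lemma~\ref{formule} in your argument, and to Remark~\ref{rem_cancel} in the paper's, all require $(X,S)$ to be involutive. Since the lemma is only invoked in the proof of Proposition~\ref{coherence_implies_braided}, where involutivity is assumed, this is harmless in context.
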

\begin{proof}
From lemma \ref{compl_gauche}, we have the following formulas:\\
 $(x_{i}\widetilde{\setminus} x_{j})\widetilde{\setminus} (x_{k}\widetilde{\setminus} x_{j})=
  f^{-1}_{f^{-1}_{j}(k)} f_{j}^{-1} (i)$ for all different $1 \leq i,j,k \leq n$\\
$ (x_{i}\widetilde{\setminus} x_{k})\widetilde{\setminus} (x_{j}\widetilde{\setminus} x_{k})=
  f^{-1}_{f_{k}^{-1}(j)} f_{k}^{-1} (i)$ for all different $1 \leq i,j,k \leq n$ \\
So, if $X$ is left coherent then for all different $1 \leq i,j,k \leq n$ it holds that:
 (*) $f^{-1}_{f^{-1}_{j}(k)} f_{j}^{-1} (i)=$ $f^{-1}_{f_{k}^{-1}(j)} f_{k}^{-1} (i)$\\
 If $j=k$, the  equality (*) holds trivially, so let fix $j$ and $k$ such that $j \neq k$.
 We have to show that (*) holds also when $i=j$ or $i=k$.\\
 The functions $F_{1}=$ $f^{-1}_{f^{-1}_{j}(k)} f_{j}^{-1}$ and $F_{2}=$ $f^{-1}_{f_{k}^{-1}(j)} f_{k}^{-1}$ are bijective, since these are compositions of bijective functions. Moreover,  $F_{1}$ and $F_{2}$ are equal in
 $\{1,2,..,n\} \setminus \{j,k\}$. So, there are two possibilities:\\
 Case (1): $F_{1}(k)=F_{2}(k)$ and $F_{1}(j)=F_{2}(j)$ or \\ Case (2): $F_{1}(k)=F_{2}(j)$ and $F_{1}(j)=F_{2}(k)$.\\
 Assume by contradiction that Case (2) occurs, so there is $1 \leq m \leq n$ such that
  $m$= $f^{-1}_{f^{-1}_{j}(k)} f_{j}^{-1} (k)=$ $f^{-1}_{f_{k}^{-1}(j)} f_{k}^{-1} (j)$\\
  that is $f_{f^{-1}_{j}(k)}(m)= f_{j}^{-1} (k)$ and $f_{f_{k}^{-1}(j)} (m)=f_{k}^{-1} (j)$\\
 That is $S(m,f^{-1}_{j}(k))=(m,f^{-1}_{j}(k))$ and $S(m,f^{-1}_{k}(j))=(m,f^{-1}_{k}(j))$,
 since $(X,S)$ is involutive.
 So, $g_{m}(f^{-1}_{j}(k))=m$ and $g_{m}(f^{-1}_{k}(j))=m$. Since $g_{m}$ is bijective, this implies that
 there is $1 \leq l \leq n$ such that $l=$ $f^{-1}_{j}(k)=f^{-1}_{k}(j)$, that is $S(l,j)=(l,k)$.
 But, since $j\neq k$,  this contradicts the fact that $(X,S)$ is involutive (see remark \ref{rem_cancel}).
 So, Case (2) cannot occur, that is for all $1 \leq i,j,k \leq n$ the equality (*) holds.
  Since the functions $f_{.}$ are bijective, (*) is equivalent to (A).\\
 Equation (B) is obtained in the same way using the coherence of $X$ (see lemma \ref{form_compl}).
 \end{proof}
\begin{prop}\label{coherence_implies_braided}
Let $(X,S)$ be  a non-degenerate and  involutive ``set-theoretical'' solution.\\ If  $X$ is coherent and left coherent, then $(X,S)$ is braided.
\end{prop}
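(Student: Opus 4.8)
The plan is to verify directly the three identities of Claim~\ref{debut_form}(iii) that characterise braidedness, using as input the equations (A) $f_{j}f_{f^{-1}_{j}(k)}=f_{k}f_{f_{k}^{-1}(j)}$ and (B) $g_{i}g_{g_{i}^{-1}(k)}=g_{k}g_{g_{k}^{-1}(i)}$ supplied by Lemma~\ref{cor_Xcoherent_equations} (these are precisely the content of the coherence and left coherence of $X$), together with non-degeneracy and involutivity.

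First I would dispatch the two ``cocycle'' relations $g_{i}g_{j}=g_{g_{i}(j)}g_{f_{j}(i)}$ and $f_{j}f_{i}=f_{f_{j}(i)}f_{g_{i}(j)}$. For the first, substitute $k=g_{i}(j)$ in (B): since $g_{i}^{-1}(g_{i}(j))=j$, (B) reads $g_{i}g_{j}=g_{g_{i}(j)}g_{g_{g_{i}(j)}^{-1}(i)}$, and by involutivity (Claim~\ref{debut_form}(ii), i.e. $g_{g_{i}(j)}(f_{j}(i))=i$) one has $g_{g_{i}(j)}^{-1}(i)=f_{j}(i)$, which gives the claimed identity. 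Symmetrically, substituting $k=f_{j}(i)$ in (A) and using $f_{f_{j}(i)}(g_{i}(j))=j$ yields the second. By Proposition~\ref{braided_actions} these two identities say exactly that $x\mapsto g_{x}$ and $x\mapsto f_{x}$ define, respectively, a left and a right action of the structure group on $X$, so after this step it remains only to establish the ``linking'' identity.

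That remaining identity $f_{g_{f_{j}(i)}(k)}(g_{i}(j))=g_{f_{g_{j}(k)}(i)}(f_{k}(j))$ is where the real work lies, and I expect it to be the main obstacle: the indices nest three deep and a naive expansion explodes combinatorially. My approach would be to name the relevant values — write $S(i,j)=(a,b)$, $S(j,k)=(c,d)$, $S(b,k)=(e,h)$, $S(i,c)=(p,q)$ — record the involutivity relations $S(a,b)=(i,j)$, $S(c,d)=(j,k)$, $S(e,h)=(b,k)$, $S(p,q)=(i,c)$, rewrite the two sides of the identity as $f_{e}(a)$ and $g_{q}(d)$, and then repeatedly trade $f$'s for $g$'s by way of involutivity until both expressions reduce to a common one, calling on (A) and (B) with the appropriate indices to bridge the final gap. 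An alternative, perhaps cleaner, route that I would fall back on if the bookkeeping gets unwieldy is to observe — reading the computation in the proof of Proposition~\ref{M_c1&c2} backwards — that for a non-degenerate involutive pair the coherence of $X$ is exactly the ``cycle-set'' identity $(x_{i}\setminus x_{k})\setminus(x_{i}\setminus x_{m})=(x_{k}\setminus x_{i})\setminus(x_{k}\setminus x_{m})$ on generators, and then to verify the braid relation in the equivalent form $(S^{12}S^{23})^{3}=id_{X^{3}}$ (legitimate because $S^{2}=id_{X^{2}}$, so $S^{12}$ and $S^{23}$ are commuting-data involutions for which $S^{12}S^{23}S^{12}=S^{23}S^{12}S^{23}$ is equivalent to the order-$3$ condition) by following the $S^{12}$- and $S^{23}$-orbit of a triple $(i,j,k)$ and matching coordinates with (A), (B) and involutivity.

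Everything outside the linking identity is routine: the two cocycle relations are immediate from (A), (B) and involutivity as above, and the passage from coherence on $X^{*}$ to coherence on $X$ is already available because $M$ is atomic (all defining relations are length-preserving). So the heart of the proof — and essentially all of the ``lot of calculations'' — is the verification of the linking relation.
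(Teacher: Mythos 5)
Your treatment of the two cocycle relations $g_{i}g_{j}=g_{g_{i}(j)}g_{f_{j}(i)}$ and $f_{j}f_{i}=f_{f_{j}(i)}f_{g_{i}(j)}$ is correct and is essentially the paper's own argument: substitute $k=g_{i}(j)$ (resp.\ $k=f_{j}(i)$) into equation (B) (resp.\ (A)) of Lemma \ref{cor_Xcoherent_equations} and convert $g^{-1}_{g_{i}(j)}(i)$ into $f_{j}(i)$ (resp.\ $f^{-1}_{f_{j}(i)}(j)$ into $g_{i}(j)$) by involutivity. The genuine gap is the linking relation $f_{g_{f_{l}(m)}(j)}(g_{m}(l))=g_{f_{g_{l}(j)}(m)}(f_{j}(l))$: you correctly identify it as the heart of the matter, but you do not prove it. You only propose two strategies --- naming the values of $S$ on the relevant pairs and ``trading $f$'s for $g$'s'' until both sides agree, or checking $(S^{12}S^{23})^{3}=\mathrm{id}_{X^{3}}$ along orbits (the equivalence with the braid relation for involutions is fine, but it is only a reformulation) --- and neither is carried out; nor is there any argument that the first sketch terminates in a common expression rather than in a new identity that would itself need justification. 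As it stands, the key identity characterising braidedness in Claim \ref{debut_form}(iii) is asserted to be provable, not proved.

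The idea you are missing is that the linking relation requires no fresh case analysis and no further input from (A) or (B): it is a formal consequence of the already-established relation $f_{j}f_{i}=f_{f_{j}(i)}f_{g_{i}(j)}$ together with involutivity and Lemma \ref{formule_gauche}. Concretely, rewrite that relation as $f_{g_{i}(j)}=f^{-1}_{f_{j}(i)}f_{j}f_{i}$, substitute $i=f_{l}(m)$, and apply both sides to $g_{m}(l)$; involutivity gives $f_{f_{l}(m)}g_{m}(l)=l$, so one obtains $f_{g_{f_{l}(m)}(j)}g_{m}(l)=f^{-1}_{f_{j}f_{l}(m)}f_{j}(l)$. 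Lemma \ref{formule_gauche} then converts $f^{-1}_{f_{j}f_{l}(m)}(f_{j}(l))$ into $g_{f^{-1}_{f_{j}(l)}f_{j}f_{l}(m)}(f_{j}(l))$, and one more application of the same cocycle relation identifies $f^{-1}_{f_{j}(l)}f_{j}f_{l}(m)$ with $f_{g_{l}(j)}(m)$, which is exactly the linking relation. Until such a derivation (or a fully executed version of one of your sketches) is supplied, your proposal does not establish that $(X,S)$ is braided.
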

\begin{proof}
We need to show that the functions $f_{.}$ and $g_{.}$ satisfy the following equations from lemma \ref{debut_form}:\\
(1)  $f_{j}f_{i}=f_{f_{j}(i)}f_{g_{i}(j)}$, $1 \leq i,j \leq n$. \\
(2) $g_{i}g_{j}=g_{g_{i}(j)}g_{f_{j}(i)}$, $1 \leq i,j \leq n$. \\
(3) $f_{g_{f_{l}(m)}(j)}g_{m}(l)=g_{f_{g_{l}(j)}(m)}f_{j}(l)$, $1 \leq j,l,m \leq n$.\\
From lemma \ref{cor_Xcoherent_equations}, we have for $1 \leq j, k \leq n$ that:\\
 (A)  $f_{j}f_{f^{-1}_{j}(k)}=f_{k}f_{f^{-1}_{k}(j)}$.\\
Assume $m=f^{-1}_{j}(k)$, that is $k=f_{j}(m)$ and let replace in formula (A) $f^{-1}_{j}(k)$ by $m$  and $k$ by $f_{j}(m)$, then we obtain:\\
$f_{j}f_{m}=f_{f_{j}(m)}f_{f^{-1}_{f_{j}(m)}(j)}$.\\
In order to show that (1) holds, we need to show that $f^{-1}_{f_{j}(m)}(j)=g_{m}(j)$.\\
From lemma \ref{formule_gauche}, we have $f^{-1}_{l}(j)=g_{f^{-1}_{j}(l)}(j)$ for every $j,l$, so by replacing $l$ by  $f_{j}(m)$, we obtain $f^{-1}_{f_{j}(m)}(j)=g_{m}(j)$. So, (1) holds.\\
From Corollary \ref{cor_Xcoherent_equations}, we have for $1 \leq j \neq k \leq n$ that:\\ (B) $ g_{i}g_{g_{i}^{-1}(k)}=g_{k}g_{g_{k}^{-1}(i)}$.\\
Assume $m=g^{-1}_{i}(k)$, that is $k=g_{i}(m)$ and let replace in formula (B) $g^{-1}_{i}(k)$ by $m$  and $k$ by $g_{i}(m)$, then we obtain:\\
$g_{i}g_{m}=g_{g_{i}(m)}g_{g^{-1}_{g_{i}(m)}(i)}$.\\
In order to show that (2) holds, we need to show that $g^{-1}_{g_{i}(m)}(i)=f_{m}(i)$.\\
From lemma \ref{formule}, we have $g^{-1}_{l}(i)=f_{g^{-1}_{i}(l)}(i)$, so by replacing $l$ by  $g_{i}(m)$, we obtain $g^{-1}_{g_{i}(m)}(i)=f_{m}(i)$. So, (2) holds.\\
It remains to show that (3) holds.
From (1), we have for $1 \leq i,j \leq n$ that  $f_{j}f_{i}=f_{f_{j}(i)}f_{g_{i}(j)}$ and this is equivalent to
$f_{g_{i}(j)}=f^{-1}_{f_{j}(i)}f_{j}f_{i}$.\\
Let replace $i$ by $f_{l}(m)$ for some $1 \leq l,m \leq n$ in the formula.\\
We obtain $f_{g_{f_{l}(m)}(j)}=f^{-1}_{f_{j}f_{l}(m)}f_{j}f_{f_{l}(m)}$.\\
By applying these functions on  $g_{m}(l)$ on both sides, we obtain\\
$f_{g_{f_{l}(m)}(j)}g_{m}(l)=f^{-1}_{f_{j}f_{l}(m)}f_{j}f_{f_{l}(m)}g_{m}(l)$.\\
Since $(X,S)$ is involutive, we have $f_{f_{l}(m)}g_{m}(l)=l$ (see lemma \ref{debut_form}).\\ So,
$f_{g_{f_{l}(m)}(j)}g_{m}(l)=f^{-1}_{f_{j}f_{l}(m)}f_{j}(l)$\\
From lemma \ref{formule_gauche}, we have $f^{-1}_{i}(k)=g_{f^{-1}_{k}(i)}(k)$ for every $i,k$, so  replacing $i$ by $f_{j}f_{l}(m)$ and $k$ by $f_{j}(l)$ gives
$f^{-1}_{f_{j}f_{l}(m)}(f_{j}(l))=g_{f^{-1}_{f_{j}(l)}f_{j}f_{l}(m)}(f_{j}(l))$.\\
So, $f_{g_{f_{l}(m)}(j)}g_{m}(l)=g_{f^{-1}_{f_{j}(l)}f_{j}f_{l}(m)}(f_{j}(l))$.\\
From (1), we have that $f^{-1}_{f_{j}(l)}f_{j}f_{l}(m)=f_{g_{l}(j)}(m)$,\\
 so $f_{g_{f_{l}(m)}(j)}g_{m}(l)=g_{f_{g_{l}(j)}(m)}f_{j}(l)$, that is (3) holds.
\end{proof}
\textbf{Proof of Theorem \ref{garside_struct_group}}
\begin{proof}
Let $M=\operatorname{Mon} \langle X\mid R \rangle$ be a  Garside monoid such that:\\
1) $X=\{x_{1},..,x_{n}\}$.\\
2) There are $n(n-1)/2$ relations in $R$.\\
3) Each side of a relation in $R$ has length 2.\\
4)  If the  word $x_{i}x_{j}$ appears in $R$, then it appears only once.\\
First,  we define a function $S: X \times X \rightarrow X \times X$ and $2n$ functions $f_{i},g_{i}$ for $1 \leq i \leq n$, such that $S(i,j)=(g_{i}(j),f_{j}(i))$ in the following way: If there is a relation $x_{i}x_{j}=x_{k}x_{l}$ then we define $S(i,j)=(k,l)$,  $S(k,l)=(i,j)$ and we define $g_{i}(j)=k$, $f_{j}(i)=l$, $g_{k}(l)=i$ and $f_{l}(k)=j$.
If the word $x_{i}x_{j}$ does not appear as a side of a relation, then we define $S(i,j)=(i,j)$ and we define $g_{i}(j)=i$ and $f_{j}(i)=j$.\\
We show that the functions $f_{i}$ and $g_{i}$ are well defined for $1 \leq i \leq n$:
Assume $g_{i}(j)=k$ and $g_{i}(j)=k'$ for some $1 \leq j,k,k' \leq n$ and $k \neq k'$, then it means from the definition of $S$ that
$S(i,j)=(k,.)$ and $S(i,j)=(k',..)$ that is the word $x_{i}x_{j}$ appears twice in $R$ and this contradicts (4).
The same argument holds for the proof that the functions $f_{i}$  are well defined.\\
We show that the functions $f_{i}$ and $g_{i}$ are bijective for $1 \leq i \leq n$:
Assume $g_{i}(j)=k$ and $g_{i}(j')=k$ for some $1 \leq j,j',k \leq n$ and $j \neq j'$, then  from the definition of $S$ we have $S(i,j)=(k,l)$ and $S(i,j')=(k,l')$ for some $1 \leq l,l'\leq n$,  that is there are the following  two defining relations in $R$: $x_{i}x_{j}=x_{k}x_{l}$ and $x_{i}x_{j'}=x_{k}x_{l'}$.
But this means that $x_{i}$ and $x_{k}$ have two different right lcms and this contradicts the assumption that $M$ is Garside. So, these functions are injective and since $X$ is finite they are bijective.
Assuming  $f_{i}$ not injective yields  generators with two different left lcms.
So, $S$ is well-defined and $(X, S)$ is non-degenerate and from (4) $(X, S)$ is also involutive.\\
It remains to show that $(X, S)$ is braided:
Since $M$ is Garside, $M$ is coherent and left coherent so from lemma \ref{coherence_implies_braided}, $(X,S)$ is braided.
Obviously condition (5) implies that $(X,S)$ is also square-free.
\end{proof}
It remains to establish the one-to-one correspondence and in order to that  we need the following terminology and claims.
\begin{defn}
A \textbf{tableau monoid} is a monoid $M=\operatorname{Mon} \langle X\mid R \rangle$  satisfying the conditions:
 i) $X=\{x_{1},..,x_{n}\}$.\\
ii) Each side of a relation in $R$ has length 2.
  \end{defn}
  The reason of the name is that it can be presented by a table.
 \begin{defn}
 We say that two tableau monoids $M=\operatorname{Mon} \langle X\mid R \rangle$ and $M'=\operatorname{Mon} \langle X'\mid R' \rangle$ are \textbf{t-isomorphic} if there exists a bijection $s:X \rightarrow X'$ such that $x_{i}x_{j}=x_{k}x_{l}$ is a relation in $R$ if and only if $s(x_{i})s(x_{j})=s(x_{k})s(x_{l})$ is a relation in $R'$.
 \end{defn}
 Clearly, if two tableau monoids are t-isomorphic then they are isomorphic and the definition is enlarged to groups.
 We recall that  $(X,S)$ and  $(X',S')$  are said to be \textbf{isomorphic} if there exists a bijection $\phi: X \rightarrow X'$ which maps $S$ to $S'$, that is $S'(\phi(x),\phi(y))=(\phi(S_{1}(x,y)),\phi(S_{2}(x,y)))$.
We show that if two non-degenerate, involutive and braided
 solutions  are isomorphic, then their structure groups (monoids) are t-isomorphic tableau groups (monoids) and conversely t-isomorphic Garside tableau monoids (satisfying additionally the conditions (2) and (4) from Theorem A) yield isomorphic non-degenerate, involutive and braided ``set-theoretical'' solutions.
\begin{claim}
Let $(X,S)$ and  $(X',S')$ be non-degenerate, involutive and braided ``set-theoretical'' solutions.
Assume $(X,S)$ and  $(X',S')$  are isomorphic.
Then their structure groups (monoids) $G$ and $G'$ are t-isomorphic tableau groups (monoids).
\end{claim}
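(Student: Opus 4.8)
The plan is to show that the bijection $\phi\colon X\to X'$ witnessing the isomorphism $(X,S)\cong(X',S')$ is itself the bijection $s$ required in the definition of a t-isomorphism. First I would record that both structure monoids are tableau monoids: by hypothesis $X$, and hence $X'=\phi(X)$, is finite, and by Definition \ref{def_rel_struct} every defining relation of the structure monoid has the form $xy=tz$ with both sides of length $2$, so conditions (i) and (ii) in the definition of a tableau monoid are satisfied. (By Theorem \ref{theo:garside} these monoids are in fact Garside, and by Claim \ref{cl_compl} they satisfy the extra conditions (2) and (4) from Theorem A, but none of that is needed here.)

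Next I would verify the defining equivalence directly from Definition \ref{def_rel_struct}. A word equation $x_ix_j=x_kx_l$ is a relation of $R$ precisely when $S(x_i,x_j)=(x_k,x_l)$, i.e. $S_1(x_i,x_j)=x_k$ and $S_2(x_i,x_j)=x_l$. Plugging $x=x_i$, $y=x_j$ into the isomorphism identity $S'(\phi(x),\phi(y))=(\phi(S_1(x,y)),\phi(S_2(x,y)))$ yields $S'(\phi(x_i),\phi(x_j))=(\phi(x_k),\phi(x_l))$, which is exactly the assertion that $\phi(x_i)\phi(x_j)=\phi(x_k)\phi(x_l)$ is a relation of $R'$. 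For the reverse implication I would note that the isomorphism relation between solutions is symmetric: applying $\phi^{-1}$ to both coordinates of the identity above and using that $\phi$ is a bijection shows that $\phi^{-1}$ maps $S'$ to $S$, so the same computation run with $\phi^{-1}$ gives the converse. Hence $s:=\phi$ is a bijection $X\to X'$ with $x_ix_j=x_kx_l\in R \iff s(x_i)s(x_j)=s(x_k)s(x_l)\in R'$, which is the definition of a t-isomorphism of tableau monoids; the statement for the groups $G$, $G'$ follows because the t-isomorphism notion extends to the enveloping groups.

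I do not expect a genuine obstacle here; the only point that needs mild care is bookkeeping about the trivial relations. If $S(x,y)=(x,y)$ the associated equation $xy=xy$ is vacuous and does not occur in $R$; the isomorphism identity then forces $S'(\phi(x),\phi(y))=(\phi(x),\phi(y))$, so the corresponding equation over $X'$ is likewise vacuous and absent from $R'$. Thus $\phi$ matches the genuine relations of $R$ bijectively with those of $R'$, creating and destroying nothing, so the correspondence is an equality of relation sets after relabelling by $\phi$ — which is stronger than an abstract monoid isomorphism and is precisely what ``t-isomorphic'' demands.
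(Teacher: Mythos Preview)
Your proof is correct and follows essentially the same line as the paper's own argument: both set $s:=\phi$ and read off the equivalence $x_ix_j=x_kx_l\in R\iff \phi(x_i)\phi(x_j)=\phi(x_k)\phi(x_l)\in R'$ directly from the isomorphism identity $S'(\phi(x),\phi(y))=(\phi(S_1(x,y)),\phi(S_2(x,y)))$. Your additional remarks on the converse via $\phi^{-1}$ and on the handling of trivial relations are sound elaborations the paper leaves implicit.
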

\begin{proof}
Clearly, the structure groups (monoids) $G$ and $G'$ are tableau groups (monoids).
We need to show that $G$ and $G'$ are t-isomorphic.
Since $(X,S)$ and  $(X',S')$  are isomorphic, there exists a bijection $\phi: X \rightarrow X'$ which maps $S$ to $S'$, that is \\$S'(\phi(x),\phi(y))=(\phi(S_{1}(x,y)),\phi(S_{2}(x,y)))$.
So, since by definition \\$S(x,y)=(S_{1}(x,y),S_{2}(x,y))$, we have $xy=tz$ iff $\phi(x)\phi(y)=\phi(t)\phi(z)$. That is, if we take $s$ to be equal to $\phi$ we have that $G$ and $G'$ are t-isomorphic.
\end{proof}
\begin{claim}
Let $M=\operatorname{Mon} \langle X\mid R \rangle$ and $M'=\operatorname{Mon} \langle X\mid R' \rangle$ be \textbf{t-isomorphic} tableau Garside monoids each satisfying the conditions (2) and (4) from Theorem \ref{garside_struct_group}.
Let $(X,S)$ and  $(X',S')$ be the non-degenerate, involutive and braided ``set-theoretical'' solutions defined respectively by $M$ and $M'$.
Then $(X,S)$ and  $(X',S')$  are isomorphic.
\end{claim}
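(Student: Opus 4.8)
The plan is to show that the t-isomorphism $s\colon X\to X'$ is itself an isomorphism of solutions, i.e.\ that it maps $S$ to $S'$ in the sense of Definition~1(iiii). The decisive observation is that in the proof of Theorem~\ref{garside_struct_group} the map $S$ attached to $M$ is defined \emph{purely} in terms of the defining set $R$: one sets $S(i,j)=(k,l)$ and $S(k,l)=(i,j)$ whenever $x_ix_j=x_kx_l$ is a relation of $R$, and $S(i,j)=(i,j)$ whenever the word $x_ix_j$ does not occur as a side of any relation of $R$; condition~(4) is exactly what makes this prescription unambiguous. The same recipe, applied to $R'$, produces $S'$.

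First I would record that $s$ induces a bijection between the relations of $R$ and those of $R'$ respecting the tableau structure: by the definition of t-isomorphism, $x_ix_j=x_kx_l\in R$ if and only if $s(x_i)s(x_j)=s(x_k)s(x_l)\in R'$. Reading this equivalence in both directions and using that $s$ is a genuine bijection of $X$ onto $X'$, it follows that the word $x_ix_j$ occurs as a side of some relation of $R$ if and only if $s(x_i)s(x_j)$ occurs as a side of some relation of $R'$; here one uses that ``being a side of a relation'' is symmetric, since a relation $u=v$ is the same as $v=u$.

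Then I would verify the intertwining relation $S'(s(x_i),s(x_j))=\bigl(s(S_1(x_i,x_j)),\,s(S_2(x_i,x_j))\bigr)$ according to the two clauses defining $S$. If $x_ix_j=x_kx_l$ is a relation of $R$, then $S(i,j)=(k,l)$, while $s(x_i)s(x_j)=s(x_k)s(x_l)$ is a relation of $R'$, so $S'(s(x_i),s(x_j))=(s(x_k),s(x_l))=\bigl(s(S_1(i,j)),s(S_2(i,j))\bigr)$. If $x_ix_j$ occurs as no side of a relation of $R$, then $S(i,j)=(i,j)$ and, by the previous paragraph, $s(x_i)s(x_j)$ occurs as no side of a relation of $R'$, whence $S'(s(x_i),s(x_j))=(s(x_i),s(x_j))$, again as required. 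Thus $\phi:=s$ is a bijection $X\to X'$ mapping $S$ to $S'$, so $(X,S)$ and $(X',S')$ are isomorphic.

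I do not expect a serious obstacle: conditions (2), (4) and the Garside hypothesis enter only through Theorem~\ref{garside_struct_group}, which already guarantees that $S$ and $S'$ are non-degenerate, involutive and braided solutions; what remains is the bookkeeping above. The single point needing a little care is the ``non-relation'' clause, where one must exploit that $s$ is a true bijection (not merely a map preserving relations in one direction) in order to transfer the \emph{absence} of a relation from $R$ to $R'$.
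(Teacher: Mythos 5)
Your proposal is correct and follows the same route as the paper: take $\phi=s$ and verify the intertwining condition directly from the tableau-based definition of $S$ and $S'$ given in the proof of Theorem \ref{garside_struct_group}. The paper's own proof is just a one-line version of this; your explicit treatment of the two clauses (relation and non-relation) and of the transfer of the absence of a relation via the biconditional in the t-isomorphism definition merely spells out the bookkeeping the paper leaves implicit.
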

\begin{proof}
Take $\phi$ to be equal to $s$ and from the definition of $S$ and $S'$ from their tableau we have
$S'(\phi(x),\phi(y))=(\phi(S_{1}(x,y)),\phi(S_{2}(x,y)))$, that is $(X,S)$ and  $(X',S')$  are isomorphic.
\end{proof}
\section{About permutation solutions which are not involutive}
Let $X$ be a set and let $S:X^{2} \rightarrow X^{2}$ be a mapping.
We recall that the permutation solution  (Lyubashenko) is defined to be (see section 2):\\
a non-degenerate, involutive and braided solutionof the form $S(x,y)=(g(y),f(x))$, where $f,g:X\rightarrow X$.\\
 It holds that $(X,S)$ is  nondegenerate iff $f,g$ are bijective,  $(X,S)$ is  braided  iff $fg=gf$ and  $(X,S)$ is involutive  iff $g=f^{-1}$.\\
We will show here
that the structure group of  not necessarily involutive permutation  solutions  are Garside.
Let $G$ be the structure group of a not necessarily involutive permutation  solution  $(X,S)$ and let $M$ be the monoid with the same presentation.
We  define an equivalence relation on the set $X$ in the following way:\\
$x\equiv x'$ if and only if there is an integer $k$ such that $(fg)^{k}(x)=x'$\\
We define $X'=X/ \equiv$ and we define
 functions $f',g': X' \rightarrow X'$ in the following way:\\
 $f'([x])=[f(x)]$ and  $g'([x])=[g(x)]$, where $[x]$ denotes the equivalence class of $x$ modulo $\equiv$.\\
 We then define $S':X'\times X'\rightarrow X'\times X'$ in the following way:\\
$S'([x],[y])= (g'([y]),f'([x])) = ([g(y)],[f(x)])$.\\
Our aim is to show that $(X',S')$ is a well-defined non-degenerate, involutive and braided solution (a permutation solution) and that its structure group $G'$ is isomorphic to $G$, but before we illustrate the main ideas of the proofs to come with an example.\\
\textbf{Example $2$}
$X=\{x_{1},x_{2},x_{3},x_{4},x_{5}\}$ and let $f=(1,4)(2,3)$ and $g=(1,2)(3,4)$.
 Then $f,g$ are bijective and satisfy $fg=gf=(1,3)(2,4)$ but $fg \neq Id$, so $(X,S)$ is a non-degenerate and braided (permutation) solution, where  $S(x,y)=(g(y),f(x))$.\\
The set of relations $R$ obtained is:\\
$\begin{array}{ccc}
x_{1}^{2}=x_{2}x_{4}=x_{3}^{2}=x_{4}x_{2}&&
x_{1}x_{2}=x_{1}x_{4}=x_{3}x_{4}=x_{3}x_{2}\\
x_{2}^{2}=x_{1}x_{3}=x_{4}^{2}=x_{3}x_{1}&&
x_{1}x_{5}=x_{5}x_{4}=x_{3}x_{5}=x_{5}x_{2}\\
x_{2}x_{1}=x_{2}x_{3}=x_{4}x_{3}=x_{4}x_{1}&&
x_{2}x_{5}=x_{5}x_{3}=x_{4}x_{5}=x_{5}x_{1}\\
\end{array}$\\
Using $\equiv$ defined above, we have $X'=\{ [x_{1}],[x_{2}],[x_{5}]\}$, with $x_{1}\equiv x_{3}$
and $x_{2}\equiv x_{4}$, since in this example it holds that $fg(1)=3$ and $fg(2)=4$.
 Applying the definition of $S'$ yields  $S'([x_{1}],[x_{1}])=([g(1)],[f(1)])=([2],[4])=([2],[2])$ and so on. So, $G'= \operatorname{Gp}\langle[x_{1}],[x_{2}],[x_{5}] \mid
[x_{1}]^{2}=[x_{2}]^{2},[x_{1}][x_{5}]=[x_{5}][x_{2}],
[x_{2}][x_{5}]=[x_{5}][x_{1}]\rangle$.
Note that in $G$, it holds that $x_{1}=x_{3}$ and $x_{2}=x_{4}$ since many of the defining relations from $R$ involve cancellation and $G= \operatorname{Gp}\langle x_{1},x_{2},x_{5} \mid x_{1}^{2}=x_{2}^{2}, x_{1}x_{5}=x_{5}x_{2},x_{2}x_{5}=x_{5}x_{1} \rangle$. We show that in the general case and before we proceed we need the following technical lemma.
\begin{lem}\label{lem:calcul_Sk}
If $k$ is odd, then \\$S^{k}(x,y)=(f^{(k-1)/2}g^{(k+1)/2}(y),f^{(k+1)/2}g^{(k-1)/2}(x))$. \\
If $k$ is even, then $S^{k}(x,y)=(f^{k/2}g^{k/2}(x),f^{k/2}g^{k/2}(y))$.
\end{lem}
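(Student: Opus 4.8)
The plan is to prove both formulas simultaneously by induction on $k$, using only the defining rule $S(x,y)=(g(y),f(x))$ of a permutation solution together with the commutativity relation $fg=gf$, which holds because $(X,S)$ is braided. The base cases are immediate: for $k=1$ the odd formula reads $(f^{0}g^{1}(y),f^{1}g^{0}(x))=(g(y),f(x))=S(x,y)$, and for $k=2$ one computes $S^{2}(x,y)=S(g(y),f(x))=(gf(x),fg(y))=(fg(x),fg(y))$, which is exactly the even formula with $k/2=1$ after using $fg=gf$.

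For the inductive step I would assume the statement for some $k\geq 1$ and apply $S$ once more, distinguishing the two parities. If $k$ is odd, then by the induction hypothesis $S^{k}(x,y)=(f^{(k-1)/2}g^{(k+1)/2}(y),\,f^{(k+1)/2}g^{(k-1)/2}(x))$; applying $S$ produces, on the new first coordinate, $g$ applied to the old second coordinate, and on the new second coordinate, $f$ applied to the old first coordinate. Moving the extra $g$ (resp. $f$) past the existing powers of $f$ (resp. $g$) via $fg=gf$ yields $(f^{(k+1)/2}g^{(k+1)/2}(x),\,f^{(k+1)/2}g^{(k+1)/2}(y))$, which is the even formula for $k+1$. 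If $k$ is even, the same one-step computation starting from $(f^{k/2}g^{k/2}(x),f^{k/2}g^{k/2}(y))$ gives $(f^{k/2}g^{k/2+1}(y),\,f^{k/2+1}g^{k/2}(x))$, and since $(k+1-1)/2=k/2$ and $(k+1+1)/2=k/2+1$ this is precisely the odd formula for $k+1$.

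There is essentially no serious obstacle here: the only thing to be careful about is the exponent bookkeeping and the systematic use of $fg=gf$ to normalise each composite in $f$ and $g$ to the form $f^{a}g^{b}$. An alternative, slightly more conceptual route is to observe first that $S^{2}$ acts as $(x,y)\mapsto(fg(x),fg(y))$, hence $S^{2m}(x,y)=((fg)^{m}(x),(fg)^{m}(y))$, and then obtain the odd case by a single further application of $S$; this reduces everything to the two base computations above. I would present the direct two-case induction since it is the shortest to write out.
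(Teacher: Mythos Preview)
Your proof is correct and follows essentially the same approach as the paper: a direct induction using only $S(x,y)=(g(y),f(x))$ and $fg=gf$. The only cosmetic difference is that the paper steps by two (applying $S^{2}$ to pass from $k$ to $k+2$), which is exactly the ``alternative route'' you mention at the end, whereas your main write-up steps by one; the computations and bookkeeping are the same.
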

\begin{proof}
The proof is by induction on $k$:\\
 If $k=1$, then $S^{1}(x,y)=(f^{(1-1)/2}g^{(1+1)/2}(y),f^{(1+1)/2}g^{(1-1)/2}(x))=(g(y),f(x)).$
 If $k=2$, then $S^{2}(x,y)=S(g(y),f(x))=(gf(x),fg(y))=(f^{2/2}g^{2/2}(x),f^{2/2}g^{2/2}(y)).$\\
 If $k=3$, then\\
 $S^{3}(x,y)=S(gf(x),fg(y))=(gfg(y),fgf(x))=(fg^{2}(y),f^{2}g(x))$.\\
 Assume $k$ is odd and assume that the formula holds for $k$, so
 we need to show that it holds  for $k+2$ also.\\
 $S^{k+2}(x,y)=S^{2}S^{k}(x,y)=\\S^{2}(f^{(k-1)/2}g^{(k+1)/2}(y),f^{(k+1)/2}g^{(k-1)/2}(x))=\\
 (gf(f^{(k-1)/2}g^{(k+1)/2}(y)),fg(f^{(k+1)/2}g^{(k-1)/2}(x))=\\
 (f^{(k+1)/2}g^{(k+3)/2}(y),f^{(k+3)/2}g^{(k+1)/2}(x))$.\\
 The same proof for $k$ even holds.
\end{proof}
\begin{lem}\label{equiv_cancel}
Let $x,x' \in X $. If $x \equiv x'$, then $x$ and $x'$ are equal in $G$.
\end{lem}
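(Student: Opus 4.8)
The plan is to reduce the statement to the single assertion that $fg(x)=x$ holds in $G$ for every $x\in X$. Indeed, by definition $x\equiv x'$ means $x'=(fg)^{k}(x)$ for some integer $k$, and (since $(X,S)$ is non-degenerate) $fg$ is a bijection of the finite set $X$, hence of finite order; so $\equiv$ is exactly the orbit equivalence of the cyclic group generated by $fg$, i.e.\ it is generated by the elementary moves $x\mapsto fg(x)$. Thus, once we know $x=fg(x)$ in $G$ for every $x$, a trivial induction on $|k|$ (using $(fg)^{-1}(x)=fg((fg)^{-1}(x))=x$ for the negative direction) gives $x=(fg)^{k}(x)$ in $G$ for all $k$, which is precisely what is claimed.

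To prove $fg(x)=x$ in $G$, I would work directly from the defining relations. For a permutation solution $S(x,y)=(g(y),f(x))$, the relation attached to the ordered pair $(x,y)$ is $xy=g(y)f(x)$, valid for every $x,y\in X$. Since we are now in the \emph{group} $G$ (not merely the monoid), we may rearrange this to
\[
f(x)=g(y)^{-1}\,x\,y\qquad\text{for all }x,y\in X .
\]
The key move is to apply this identity not to $x$ but to the generator $g(x)\in X$: substituting $g(x)$ for $x$ yields $fg(x)=g(y)^{-1}\,g(x)\,y$ for every $y\in X$, and then specializing $y=x$ collapses the right-hand side to $g(x)^{-1}g(x)x=x$. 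Hence $fg(x)=x$ in $G$, and the lemma follows.

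I do not expect a real obstacle here; the only point to watch is that the cancellation of $g(x)$ against $g(x)^{-1}$ happens in $G$ and would \emph{not} be available in the monoid $M$, which is consistent with the lemma being an identity in the structure group rather than in $M$. A secondary point worth recording is that the naive substitution $y=x$ directly in $xy=g(y)f(x)$ only produces $x^{2}=g(x)f(x)=(fg(x))^{2}$ (using $fg=gf$), which is strictly weaker; it is the preliminary replacement of $x$ by $g(x)$ that does the real work.
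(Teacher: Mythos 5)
Your proof is correct, but it follows a genuinely different and more elementary route than the paper's. The paper proves the lemma by choosing, for odd $k$, the auxiliary generator $y=f^{(k+1)/2}g^{(k-1)/2}(x)$ and invoking the closed formula for $S^{k}$ (Lemma \ref{lem:calcul_Sk}) to get $S^{k}(x,y)=((fg)^{k}(x),y)=(x',y)$; since each application of $S$ is a defining relation, this yields $xy=x'y$ as a chain of defining relations on positive words, after which $y$ is cancelled in the group $G$ (the even case is handled through an intermediate element and two odd steps). You instead reduce everything to the single identity $fg(x)=x$ in $G$ for each generator, which you extract from just one defining relation, namely the one attached to the pair $(g(x),x)$, i.e. $g(x)\,x=g(x)\,fg(x)$, followed by left cancellation in $G$; iterating this identity along the orbit of $fg$ (in both directions, so negative $k$ is covered) gives the lemma. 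Your approach buys the elimination of Lemma \ref{lem:calcul_Sk} and of the parity case analysis, and it isolates cleanly where group inverses are indispensable; the paper's approach has the mild advantage that its intermediate relation $xy=x'y$ holds already in the monoid $M$, with the passage to $G$ needed only for the final cancellation --- exactly the point you flag yourself. One small quibble: your side remark that $x^{2}=g(x)f(x)=(fg(x))^{2}$ follows ``using $fg=gf$'' also needs one further defining relation (the one for the pair $(g(x),f(x))$), but this does not affect the main argument.
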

\begin{proof}
 Let  $x,x'$ be in  $X$ such that  $x \equiv x'$.
If $x \equiv x'$ then it means  that there is an integer $k$ such that
  $(fg)^{k}(x)=f^{k}g^{k}(x)=x'$.
 If $k$ is odd, then let $y$ in $X$ be defined in the following way:
  $y=f^{(k+1)/2}g^{(k-1)/2}(x)$.
  Then from lemma \ref{lem:calcul_Sk}, \\
   $S^{k}(x,y)=(f^{(k-1)/2}g^{(k+1)/2}(y),f^{(k+1)/2}g^{(k-1)/2}(x))\\
                  =(f^{(k-1)/2}g^{(k+1)/2}f^{(k+1)/2}g^{(k-1)/2}(x),y)=((fg)^{k}(x),y)=(x',y)$\\
  So, there is a relation $xy=x'y$ in $G$ which implies that $x=x'$ in $G$.\\
  If $k$ is even and $(fg)^{k}(x)=x'$, then there is an element  $x'' \in X$ such that
   $(fg)^{k-1}(x)=x''$,  where $k-1$ is odd. So, from the subcase studied above,
    there is $y \in X$,  $y=f^{k/2}g^{(k-2)/2}(x)$,  such that there is a relation $xy=x''y$ in $G$ and this
     implies that $x=x''$ in $G$.
Additionally, it holds that $(fg)(x'')=x'$, so from the same argument as above there is $z \in X$  such that there is  a relation $x'z=x''z$ in $G$
and this implies that $x'=x''$ in $G$. So, $x=x'$ in $G$.\\
\end{proof}
We now show that $(X',S')$ is a a well-defined non-degenerate, involutive and braided solution and this  implies from Theorem \ref{theo:garside} that its structure group $G'$ is Garside.
\begin{lem}
The functions $f'$ and $g'$ are well defined.
Hence $S'$ is well-defined.
\end{lem}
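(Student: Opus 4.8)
The plan is to reduce everything to the single fact that, since $(X,S)$ is a braided permutation solution, $f$ and $g$ commute, and hence both commute with $fg$. First I would record that $\equiv$ is genuinely an equivalence relation: the statement $x \equiv x'$ says precisely that $x$ and $x'$ lie in the same orbit of the cyclic group generated by the bijection $fg$, so reflexivity (take $k=0$), symmetry (take $-k$, using that $fg$ is invertible) and transitivity (add exponents) are immediate. In particular $X' = X/\equiv$ is well defined as a set.

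Next, to see that $f'$ is well defined I would take $x, x' \in X$ with $x \equiv x'$, so that $(fg)^{k}(x) = x'$ for some integer $k$. Applying $f$ to both sides gives $f((fg)^{k}(x)) = f(x')$. Since $fg = gf$, the map $f$ commutes with $fg$ and therefore with every power $(fg)^{k}$; thus $(fg)^{k}(f(x)) = f(x')$, which is exactly $f(x) \equiv f(x')$. Hence $f'([x]) := [f(x)]$ does not depend on the chosen representative. The argument for $g'$ is identical: from $(fg)^{k}(x) = x'$ apply $g$ and use that $g$ commutes with $fg$ (again because $fg = gf$) to obtain $(fg)^{k}(g(x)) = g(x')$, i.e.\ $g(x) \equiv g(x')$, so $g'([x]) := [g(x)]$ is well defined.

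Finally, $S'$ is well defined because its two coordinate maps are: for classes $[x], [y] \in X'$ one sets $S'([x],[y]) = (g'([y]), f'([x])) = ([g(y)], [f(x)])$, and the right-hand side depends only on $[x]$ and $[y]$ by the previous paragraph. There is essentially no obstacle here; the entire content of the lemma is the commutativity $fg=gf$, which is available because $(X,S)$ is braided. The slightly less routine points to keep in mind for the lemmas that follow are that $f'$ and $g'$ stay bijective on $X'$ (they are induced by bijections on orbit spaces) and that $f'g'$ becomes the identity on $X'$ — by construction $(fg)(x) \equiv x$ for every $x$ — which is exactly what forces $(X',S')$ to be a genuine non-degenerate, involutive, braided permutation solution.
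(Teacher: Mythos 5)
Your proof is correct and follows essentially the same route as the paper: in both, one takes $x' = (fg)^{k}(x)$ and uses the commutativity $fg = gf$ to slide $f$ (resp.\ $g$) past $(fg)^{k}$, giving $[f(x')] = [f(x)]$ (resp.\ $[g(x')] = [g(x)]$), whence $S'$ is well defined coordinatewise. Your additional check that $\equiv$ is an equivalence relation is a harmless (and reasonable) supplement that the paper leaves implicit.
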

\begin{proof}
Let $x,x'$ be in $X$ such that $x\equiv x'$.
We will show that $f'([x']) = f'([x])$, that is $[f(x')]=[f(x)]$.
Since  $x\equiv x'$, there is an integer $k$ such that $x'=(fg)^{k}(x)$, so $f'([x'])=[f(x')]=[f(fg)^{k}(x)]=[(fg)^{k}f(x)]=[f(x)]=f'([x])$, using the fact that $fg=gf$.
The same proof holds for $g'([x'])=g'([x])$.
\end{proof}
\begin{lem}
The functions $f'$ and $g'$ are bijective.
Hence $S'$ is non-degenerate.
\end{lem}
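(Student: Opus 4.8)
The plan is to establish bijectivity of $f'$ and $g'$ by handling surjectivity and injectivity separately: surjectivity is immediate from the fact that $f$ and $g$ are onto $X$, while injectivity rests on the commutation relation $fg = gf$. Once this is done, non-degeneracy of $(X',S')$ is automatic, since $S'$ has the permutation-solution form $S'([x],[y]) = (g'([y]), f'([x]))$ and a solution of that shape is non-degenerate precisely when the two defining maps are bijective (the criterion recalled in Section~2).

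For surjectivity: given $[w] \in X'$, set $x = f^{-1}(w)$, which makes sense because $f$ is a bijection of $X$; then $f'([x]) = [f(f^{-1}(w))] = [w]$, so $f'$ is onto $X'$, and the same works for $g'$ with $g^{-1}$ in place of $f^{-1}$. For injectivity of $f'$, suppose $f'([x]) = f'([x'])$, i.e.\ $[f(x)] = [f(x')]$. By the definition of $\equiv$ there is an integer $k$ with $(fg)^{k}\big(f(x)\big) = f(x')$. Since $fg = gf$, the map $fg$ --- and hence each power $(fg)^{k}$ --- commutes with $f$, so this equality rewrites as $f\big((fg)^{k}(x)\big) = f(x')$; as $f$ is injective on $X$ we conclude $(fg)^{k}(x) = x'$, that is $x \equiv x'$ and $[x] = [x']$. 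The identical argument, now using that $(fg)^{k}$ commutes with $g$, gives injectivity of $g'$.

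Putting these together, $f'$ and $g'$ are bijective, and therefore the two non-degeneracy maps $[x] \mapsto S'_{2}([x],[y]) = f'([x])$ (for fixed $[y]$) and $[x] \mapsto S'_{1}([z],[x]) = g'([x])$ (for fixed $[z]$) are bijections, so $(X',S')$ is non-degenerate. I do not expect a genuine obstacle here; the only step that requires a moment's care is sliding $(fg)^{k}$ past $f$ (resp.\ $g$) via $fg = gf$, and everything else is routine.
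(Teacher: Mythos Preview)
Your proof is correct and the injectivity argument is essentially identical to the paper's: both assume $[f(x)]=[f(x')]$, use $fg=gf$ to slide $(fg)^{k}$ past $f$, and then cancel $f$ by its injectivity. The only difference is that you prove surjectivity explicitly, whereas the paper proves injectivity only and implicitly relies on $X'$ being finite; this is a harmless addition on your part.
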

\begin{proof}
Assume that there are $x,y \in X$ such that $f'([x])=f'([y])$, that is $[f(x)]=[f(y)]$.
By the definition of $\equiv$, this means that there is an integer $k$ such that $f(x)=(fg)^{k}f(y)$, that is $f(x)=f(fg)^{k}(y)$,  since $fg=gf$. But $f$ is bijective, so $x=(fg)^{k}(y)$, which means that $[x]=[y]$. The same proof holds for $g'$, using the fact that $g$ is bijective.
\end{proof}
\begin{lem}
(1) $S'$ is braided, that is $f'g'=g'f'$.\\
(2) $S'$ is involutive, that is $f'g'=g'f'=id_{X'}$.
\end{lem}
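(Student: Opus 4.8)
The plan is to read off both claims directly from the hypothesis $fg=gf$ on the bijections $f,g\colon X\to X$ defining the permutation solution $(X,S)$, together with the definition of the equivalence relation $\equiv$. The two lemmas just proved already tell us that $f'$ and $g'$ are well-defined bijections of $X'=X/\equiv$, so every computation below makes sense at the level of equivalence classes; no new machinery is needed.

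For part~(1) I would compute $f'g'$ and $g'f'$ on an arbitrary class: for $x\in X$ we have $f'g'([x])=f'([g(x)])=[f(g(x))]=[fg(x)]$, and symmetrically $g'f'([x])=[gf(x)]$. Since $fg$ and $gf$ are the same map on $X$, we get $[fg(x)]=[gf(x)]$ for every $x$, hence $f'g'=g'f'$ as maps $X'\to X'$. By the characterization of permutation solutions recalled in Section~2 (a solution of the form $S'(u,v)=(g'(v),f'(u))$ is braided precisely when $f'g'=g'f'$), this is exactly the assertion that $(X',S')$ is braided.

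For part~(2) the key observation is that $fg$ acts as the identity on $X'$: taking $k=1$ in the definition of $\equiv$ gives $fg(x)=(fg)^{1}(x)\equiv x$ for every $x\in X$. Hence $f'g'([x])=[fg(x)]=[x]$, so $f'g'=id_{X'}$; combining this with $gf=fg$ we likewise get $g'f'([x])=[gf(x)]=[fg(x)]=[x]$, so $g'f'=id_{X'}$ as well. Thus $g'=(f')^{-1}$, which by the same characterization means $(X',S')$ is involutive. Consequently $(X',S')$ is a non-degenerate, involutive and braided permutation solution, and by Theorem~\ref{theo:garside} its structure group $G'$ is Garside, which is the conclusion this section is driving at.

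I do not expect a genuine obstacle here. The only point requiring care is that every equality above is an equality of \emph{classes}, so one must already know that $f'$ and $g'$ descend to bijections of $X'$ before writing $[f(g(x))]=f'([g(x)])$; but that is exactly the content of the two preceding lemmas and may be invoked without further comment. Everything else is a one-line consequence of $fg=gf$ and of the tautology $fg(x)\equiv x$.
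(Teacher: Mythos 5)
Your proposal is correct and follows essentially the same route as the paper: part (1) is the same direct computation $f'g'([x])=[fg(x)]=[gf(x)]=g'f'([x])$ using $fg=gf$, and part (2) uses exactly the paper's observation that $fg(x)\equiv x$ by definition of $\equiv$ (your $k=1$ remark), so $f'g'=id_{X'}$. The extra remarks about well-definedness and the Garside consequence are consistent with the surrounding lemmas and add nothing problematic.
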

\begin{proof}
Let $[x]\in X'$.\\
(1) From  the definition of $f',g'$, we have $f'(g'([x]))=f'([g(x)])=[f(g(x))]$  and $g'f'([x])=g'([f(x)])=[g(f(x))]$. Since $fg=gf$, we get  $f'g'=g'f'$.\\
(2) From the definition of $\equiv$, we have $[fg(x)]=[x]$, so $f'g'([x])=[x]$, that is $f'g'=id_{X'}$.
\end{proof}
\begin{lem} \label{lem:G'isGarside}
Let $(X,S)$  be a not necessarily involutive permutation  solution. Let $X'=X/ \equiv$ and let $G'$ be the structure group corresponding to  $(X',S')$, as defined above.
Then $G'$ is  Garside.\\
Furthermore,  if $x_{i}x_{j}=x_{k}x_{l}$  is  a defining relation in $G$, then $[x_{i}][x_{j}]=[x_{k}][x_{l}]$ is a defining relation in $G'$.
\end{lem}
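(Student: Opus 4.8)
The plan is to deduce the statement from the structural results already established for $(X',S')$ together with Lemma~\ref{equiv_cancel}. First I would observe that in the previous lemmas we have verified that $f'$ and $g'$ are well-defined (using $fg=gf$), that they are bijective (so $S'$ is non-degenerate), and that $f'g'=g'f'=id_{X'}$ (so $S'$ is braided and involutive). Thus $(X',S')$ is a non-degenerate, involutive and braided ``set-theoretical'' solution, and Theorem~\ref{theo:garside} applies directly to its structure group $G'$, yielding that $G'$ is Garside. This gives the first assertion.

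For the second assertion I would argue as follows. Let $x_{i}x_{j}=x_{k}x_{l}$ be a defining relation of $G$, i.e.\ $S(x_{i},x_{j})=(x_{k},x_{l})$, which by definition of the permutation solution means $g(x_{j})=x_{k}$ and $f(x_{i})=x_{l}$. Passing to equivalence classes and using the definition of $g'$ and $f'$, we get $g'([x_{j}])=[g(x_{j})]=[x_{k}]$ and $f'([x_{i}])=[f(x_{i})]=[x_{l}]$, hence $S'([x_{i}],[x_{j}])=(g'([x_{j}]),f'([x_{i}]))=([x_{k}],[x_{l}])$. By Definition~\ref{def_rel_struct} applied to $(X',S')$, this says precisely that $[x_{i}][x_{j}]=[x_{k}][x_{l}]$ is a defining relation of $G'$.

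The main point to be careful about — and the only place where real content beyond bookkeeping enters — is that this correspondence is consistent: a priori two different relations of $G$ could collapse to the ``same'' pair of classes or produce a genuinely new relation in $G'$, and one wants the assignment $x_{i}\mapsto [x_{i}]$ to extend to a well-defined homomorphism (in fact isomorphism) $G\to G'$. This is where Lemma~\ref{equiv_cancel} does the work: since $x\equiv x'$ forces $x=x'$ already in $G$, the map $X\to X'$ factors through the relations of $G$, and every relation of $G'$ is the image of a relation of $G$ by the computation above; conversely the relations $x=x'$ for $x\equiv x'$ hold in $G$, so no information is lost. Hence the statement as phrased — that each defining relation of $G$ maps to a defining relation of $G'$ — follows immediately from the class-level computation, and I would not belabor the isomorphism itself here since only the stated implication is needed.
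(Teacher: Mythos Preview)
Your proof is correct and follows essentially the same route as the paper: invoke the preceding lemmas to see that $(X',S')$ is non-degenerate, involutive and braided, apply Theorem~\ref{theo:garside} to get $G'$ Garside, and then read off $S'([x_i],[x_j])=([x_k],[x_l])$ from $S(x_i,x_j)=(x_k,x_l)$ via the definitions of $f'$ and $g'$. The only difference is that your third paragraph, invoking Lemma~\ref{equiv_cancel}, is not needed for this lemma at all---the paper does not use it here, and you yourself observe at the end that the class-level computation already suffices; that lemma only enters in the subsequent theorem where the isomorphism $G\cong G'$ is established.
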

\begin{proof}
It holds that $(X',S')$ is a non-degenerate, braided and involutive permutation  solution,
 so by Theorem \ref{theo:garside} the group $G'$ is a Garside group.\\
 Assume that  $x_{i}x_{j}=x_{k}x_{l}$  is  a defining relation in $G$, that is $S(x_{i},x_{j})=(x_{k},x_{l})$.
 From the definition of $S'$, we have  $S'([x_{i}],[x_{j}])=([g(x_{j})],[f(x_{i})])\\=([x_{k}],[x_{l}])$, that is there is a defining relation $[x_{i}][x_{j}]=[x_{k}][x_{l}]$ in $G'$. Note that this relation  may be a trivial one if
 $[x_{i}]=[x_{k}]$ and $ [x_{j}]=[x_{l}]$ in $G'$
\end{proof}
Now, it remains to show that the structure group $G$ is isomorphic
to  the group $G'$.
\begin{thm}
Let $G$ be the structure group of a not necessarily involutive permutation  solution  $(X,S)$.
Then the group $G$ is  Garside.
\end{thm}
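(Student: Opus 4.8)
The plan is to show that $G$ is isomorphic to the group $G'$ built above; since $G'$ is Garside by Lemma~\ref{lem:G'isGarside} (it is the structure group of the non-degenerate, involutive and braided permutation solution $(X',S')$, hence Garside by Theorem~\ref{theo:garside}), this immediately gives the conclusion. Concretely, I would define two homomorphisms and check they are mutually inverse: a map $\psi\colon G\to G'$ given on generators by $\psi(x)=[x]$ for $x\in X$, and a map $\chi\colon G'\to G$ given on generators by $\chi([x])=x$, where on the right $x$ denotes the image in $G$ of an arbitrary representative of the class $[x]$.

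First I would check $\psi$ is well defined, i.e. respects the defining relations of $G$: if $x_{i}x_{j}=x_{k}x_{l}$ is a defining relation of $G$, then by Lemma~\ref{lem:G'isGarside} the equality $[x_{i}][x_{j}]=[x_{k}][x_{l}]$ holds in $G'$, so $\psi$ extends to a group homomorphism. Next I would check $\chi$ is well defined. The value $\chi([x])$ is independent of the chosen representative: if $x\equiv x'$ then $x=x'$ in $G$ by Lemma~\ref{equiv_cancel}. To see that the defining relations of $G'$ hold in $G$, recall that such a relation has the form $[x_{i}][x_{j}]=[x_{k}][x_{l}]$ arising from $S'([x_{i}],[x_{j}])=([g(x_{j})],[f(x_{i})])=([x_{k}],[x_{l}])$; hence $g(x_{j})\equiv x_{k}$ and $f(x_{i})\equiv x_{l}$, so $g(x_{j})=x_{k}$ and $f(x_{i})=x_{l}$ in $G$ by Lemma~\ref{equiv_cancel}. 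Since $S(x_{i},x_{j})=(g(x_{j}),f(x_{i}))$ is a defining relation of $G$, we get $x_{i}x_{j}=g(x_{j})f(x_{i})=x_{k}x_{l}$ in $G$, as needed, so $\chi$ extends to a homomorphism. Finally I would verify that $\psi$ and $\chi$ are mutually inverse: on generators $\chi\psi(x)$ is the image in $G$ of a representative of $[x]$, which is $x$ by Lemma~\ref{equiv_cancel}, while $\psi\chi([x])=[x]$ is immediate. Therefore $G\cong G'$, and $G$ is Garside.

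The only nontrivial point — and the main obstacle — is the well-definedness of $\chi$, which rests entirely on the cancellation statement of Lemma~\ref{equiv_cancel}: it is precisely the many ``collapsing'' relations present in $G$ (those forcing $x\equiv x'\Rightarrow x=x'$) that let the a priori large presentation of $G$ be rewritten as the finite Garside presentation of $G'$. Everything else is a routine transport of relations along the quotient map $X\to X'=X/\!\equiv$.
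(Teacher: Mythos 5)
Your proposal is correct and follows essentially the same route as the paper: both establish $G\cong G'$ via the quotient map $x\mapsto[x]$, using Lemma~\ref{lem:G'isGarside} to transport the relations of $G$ to $G'$ and Lemma~\ref{equiv_cancel} to transport the relations of $G'$ back to $G$, and then conclude Garsidity from Lemma~\ref{lem:G'isGarside}. Writing the argument as a pair of mutually inverse homomorphisms rather than as surjectivity plus injectivity of $\Phi$ is only a cosmetic difference.
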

\begin{proof}
We will show that the group $G=\langle X \mid R\rangle  $ is isomorphic to the group  $G'$, where $G'$ is the structure group of $(X',S')$ and
$X'=X/ \equiv$, and from lemma \ref{lem:G'isGarside} we obtain that the group $G$ is a Garside group.\\
 Let define the following quotient  map $\Phi: X \rightarrow X'$  such that for  $x \in X$, $\Phi (x)=[x]$.
 From lemma \ref{lem:G'isGarside}, $\Phi:G \rightarrow G'$ is an homomorphism of groups, so $\Phi $ is an epimorphism.
We need to show that $\Phi$ is injective:
We show that  if $[x][y]=[t][z]$ is a non-trivial defining relation in $G'$, then $xy=tz$  is  a defining relation in $G$.
If $[x][y]=[t][z]$ is a non-trivial defining relation in $G'$, then since
 $S'([x],[y])= ([g(y)],[f(x)])$, we have that $[g(y)]=[t]$ and $[f(x)]=[z]$. That is, there are
$z' \in [z]$ and $t' \in [t]$ such that $g(y)=t'$ and $f(x)=z'$.
 This implies that $S(x,y)=(g(y),f(x))=(t',z')$, that is $xy=t'z'$  is  a defining relation in $G$.
 It holds that $t \equiv t'$ and $z \equiv z'$, so from lemma \ref{equiv_cancel}, $t=t'$ and $z=z'$ in $G$.
 So, $xy=tz$  is  a defining relation in $G$.
 Note that if $[x][y]=[t][z]$ is a trivial relation in $G'$, that is $[x]=[t]$ and $[y]=[z]$, then
 from lemma \ref{equiv_cancel} $x=t$ and $y=z$ in $G$ and so $xy=tz$ holds trivially in $G$.
So, $\Phi$ is an isomorphism of the groups $G$ and $G'$ and from lemma \ref{lem:G'isGarside}, we have that $G$ is Garside.
\end{proof}

\section{Calculation of $\Delta$ for a permutation solution}
Let $(X,S)$ be a permutation solution. In case $(X,S)$ is not involutive, the results  obtained here can be applied using the procedure described in Section $9$.
We denote by $G$ the structure group of $(X,S)$ and by $M$ the monoid with the same presentation.
In order to calculate a Garside element $\Delta$, we need the following technical lemmas.  In \cite{gateva}, results of the same flavor (with no proof) are obtained for $(X,S)$ a square-free, non-degenerate, involutive and  braided  solution. So, it seems that these are well-known facts for square-free  solutions.\\
We recall that $(X,S)$ is given by  $X=\{x_{1},..,x_{n}\}$, $S(i,j)=(f(j),f^{-1}(i))$, where  $f$ is a permutation on $\{1,..,n\}$. $f$ is described as the product of disjoint cycles of the form \\ $f=(t_{1,1},..,t_{1,m_{1}})(t_{2,1},..,t_{2,m_{2}})
(t_{k,1},..,t_{k,m_{k}})(s_{1})..(s_{l})$, where all  $t_{i,j},s_{*} \in \{1,..,n\}$.
\begin{lem}\label{same_cycle}
It holds for each $1 \leq i \leq  k $ that $x_{t_{i,1}}^{m_{i}}=x_{t_{i,2}}^{m_{i}}=..=x_{t_{i,m_{i}}}^{m_{i}}$.
\end{lem}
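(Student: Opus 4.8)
The plan is to prove the stronger ``local'' statement that $x_{a}^{\,m}=x_{f^{-1}(a)}^{\,m}$ holds in $M$ (hence in $G$) whenever $a$ lies in a cycle of $f$ of length $m$; the chain of equalities in the lemma then follows at once by iterating this along a single cycle $(t_{i,1},\dots,t_{i,m_{i}})$, since $f^{-1}(t_{i,j})=t_{i,j-1}$ with the subscripts read cyclically modulo $m_{i}$. Recall that for a permutation solution the defining relation coming from $S(i,j)=(f(j),f^{-1}(i))$ is $x_{i}x_{j}=x_{f(j)}x_{f^{-1}(i)}$; the useful special case is that for any $a$ and any $b$ one may rewrite $x_{a}x_{b}=x_{f(b)}x_{f^{-1}(a)}$, which ``pushes an $x_{a}$ past $x_{b}$'' at the cost of advancing the left index by $f$ and leaving a trailing $x_{f^{-1}(a)}$. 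In particular $x_{a}^{2}=x_{f(a)}x_{f^{-1}(a)}$.

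The heart of the argument is the identity
\[
x_{a}^{\,j}=x_{f^{\,j-1}(a)}\,x_{f^{-1}(a)}^{\,j-1}\qquad\text{in }M,
\]
valid for every $j\geq 1$, which I would establish by induction on $j$. The case $j=1$ is trivial, the right-hand side being just $x_{a}$ (and $j=2$ is the relation above). For the inductive step one multiplies the induction hypothesis on the left by $x_{a}$ and applies $x_{a}x_{f^{\,j-1}(a)}=x_{f^{\,j}(a)}x_{f^{-1}(a)}$ to the first two letters; this turns $x_{a}^{\,j+1}=x_{a}\,x_{f^{\,j-1}(a)}\,x_{f^{-1}(a)}^{\,j-1}$ into $x_{f^{\,j}(a)}\,x_{f^{-1}(a)}^{\,j}$, as wanted. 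Taking $j=m$ and using $f^{\,m}(a)=a$, so that $f^{\,m-1}(a)=f^{-1}(a)$, the identity collapses to $x_{a}^{\,m}=x_{f^{-1}(a)}\,x_{f^{-1}(a)}^{\,m-1}=x_{f^{-1}(a)}^{\,m}$, which is exactly the local statement.

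Since only defining relations are used and no cancellation is invoked, everything already takes place in the monoid $M$. Applying the local statement successively to $a=t_{i,1},t_{i,2},\dots,t_{i,m_{i}}$ gives $x_{t_{i,j}}^{\,m_{i}}=x_{t_{i,j-1}}^{\,m_{i}}$ for every $j$ (indices mod $m_{i}$), hence $x_{t_{i,1}}^{\,m_{i}}=x_{t_{i,2}}^{\,m_{i}}=\dots=x_{t_{i,m_{i}}}^{\,m_{i}}$. I do not expect a genuine obstacle here; the only point requiring care is the index bookkeeping in the inductive step and the cyclic reading of the subscripts, and once the rewriting identity above is set up correctly the rest is a short computation.
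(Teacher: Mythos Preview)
Your proof is correct and is essentially the same as the paper's: both establish the identity $x_{a}^{\,j}=x_{f^{\,j-1}(a)}\,x_{f^{-1}(a)}^{\,j-1}$ by repeatedly applying the defining relation $x_{i}x_{j}=x_{f(j)}x_{f^{-1}(i)}$, and then set $j=m_{i}$ to collapse it to $x_{a}^{\,m_{i}}=x_{f^{-1}(a)}^{\,m_{i}}$. The only cosmetic difference is that the paper rewrites the rightmost pair of letters inside $x_{t_{i,p}}^{\,m_{i}}$ step by step, while you isolate the identity for $x_{a}^{\,j}$ and induct by left multiplication; the underlying computation is identical.
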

\begin{proof}
From  $S(t_{i,p},t_{i,p})=(t_{i,p+1},t_{i,p-1})$, we have
$x_{t_{i,p}}^{m_{i}}=x_{t_{i,p}}^{m_{i}-2}x_{t_{i,p}}x_{t_{i,p}}=
x_{t_{i,p}}^{m_{i}-3}x_{t_{i,p}}x_{t_{i,p+1}}x_{t_{i,p-1}}=
x_{t_{i,p}}^{m_{i}-3}x_{t_{i,p+2}}x_{t_{i,p-1}}x_{t_{i,p-1}}$.\\
We go on inductively and we obtain
$x_{t_{i,p}}^{m_{i}}=x_{t_{i,p-1+m_{i}}}x_{t_{i,p-1}}^{m_{i}-1}$.\\
Since $p-1+m_{i}=p-1 (mod m_{i})$, we  have $x_{t_{i,p}}^{m_{i}}=x_{t_{i,p-1}}^{m_{i}}$.
\end{proof}
From lemma \ref{same_cycle}, there is no ambiguity if we  denote by $x_{t_{i}}^{m_{i}}$ any element $x_{t_{i,*}}^{m_{i}}$, since all these are equal. Moreover, for each cycle of length $m_{i}$ the calculations  are modulo $m_{i}$, so we often omit the modulo $m_{i}$.
\begin{lem}\label{one_dft_cycles}
It holds for  $1 \leq i,j \leq k $ that\\ $x_{t_{i,*}}x_{t_{j}}^{m_{j}}=x_{t_{j}}^{m_{j}}x_{t_{i,*-m_{j}}}$ and
$x_{t_{i}}^{m_{i}}x_{t_{j,*}}=x_{t_{j,*+{m_{i}}}}x_{t_{i}}^{m_{i}}$.
\end{lem}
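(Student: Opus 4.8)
The plan is to isolate a single ``sliding'' identity satisfied by the defining relations and then specialise it. For a permutation solution the relation attached to $S(a,b)=(f(b),f^{-1}(a))$ is $x_a x_b = x_{f(b)}\,x_{f^{-1}(a)}$. I claim that for arbitrary indices $a,b_1,\dots,b_m$ one has in $M$
\[
x_a\,x_{b_1}\cdots x_{b_m}=x_{f(b_1)}\cdots x_{f(b_m)}\,x_{f^{-m}(a)}
\]
and, symmetrically,
\[
x_{b_1}\cdots x_{b_m}\,x_c = x_{f^{m}(c)}\,x_{f^{-1}(b_1)}\cdots x_{f^{-1}(b_m)}.
\]
Both are proved by an immediate induction on $m$: the case $m=1$ is exactly the defining relation, and the inductive step combines the induction hypothesis (applied to the initial segment $x_a x_{b_1}\cdots x_{b_m}$, resp. to $x_{b_1}\cdots x_{b_m}x_{f(c)}$) with one further application of the defining relation to the adjacent pair $(x_{f^{-m}(a)},x_{b_{m+1}})$, resp. $(x_{b_{m+1}},x_c)$. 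Since $f^{-m}(a)$ and $f^{m}(c)$ are again single indices, nothing new occurs.

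Granting this, the first assertion follows by taking $a=t_{i,p}$, all the $b_r$ equal to a fixed $t_{j,q}$, and $m=m_j$, which gives $x_{t_{i,p}}x_{t_{j,q}}^{m_j}=x_{f(t_{j,q})}^{m_j}\,x_{f^{-m_j}(t_{i,p})}$. Now $f$ restricted to the $j$-th cycle is the index shift $t_{j,q}\mapsto t_{j,q+1}$, so $f(t_{j,q})$ again lies in that cycle and, by Lemma \ref{same_cycle}, $x_{f(t_{j,q})}^{m_j}=x_{t_j}^{m_j}$; likewise $f$ restricted to the $i$-th cycle shifts the index by $1$ modulo $m_i$, so $f^{-m_j}(t_{i,p})=t_{i,p-m_j}$. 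This yields $x_{t_{i,*}}x_{t_j}^{m_j}=x_{t_j}^{m_j}x_{t_{i,*-m_j}}$. The second assertion is obtained in the same way from the second sliding identity with $c=t_{j,p}$, all $b_r$ equal to $t_{i,q}$, and $m=m_i$: one gets $x_{t_{i,q}}^{m_i}x_{t_{j,p}}=x_{f^{m_i}(t_{j,p})}\,x_{f^{-1}(t_{i,q})}^{m_i}$, where $f^{-1}(t_{i,q})=t_{i,q-1}$ is still in the $i$-th cycle, so $x_{f^{-1}(t_{i,q})}^{m_i}=x_{t_i}^{m_i}$ by Lemma \ref{same_cycle}, while $f^{m_i}(t_{j,p})=t_{j,p+m_i}$.

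I expect no genuine obstacle: the whole content is the sliding identity together with the index bookkeeping. The only points requiring a little care are that the symbol $x_{t_j}^{m_j}$ is meaningful (this is precisely Lemma \ref{same_cycle}), and that the same lemma is what lets one absorb the shifted power $x_{f(t_{j,q})}^{m_j}$, resp. $x_{f^{-1}(t_{i,q})}^{m_i}$, into it; that all index arithmetic is read modulo the relevant cycle length, since $f$ acts on each cycle as a cyclic shift; and that the computation uses only monoid relations, hence the equalities already hold in $M$. The degenerate case $i=j$ is covered: there $x_{t_{i,*-m_i}}=x_{t_{i,*}}$, so the statement simply records that $x_{t_{i,*}}$ commutes with $x_{t_i}^{m_i}$.
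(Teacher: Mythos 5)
Your proof is correct and follows essentially the same route as the paper: one repeatedly applies the defining relation $x_a x_b = x_{f(b)}x_{f^{-1}(a)}$ to slide the isolated generator past the power (your ``sliding identity'' is exactly this iteration, packaged as an induction), and then invokes Lemma \ref{same_cycle} to rewrite $x_{t_{j,q+1}}^{m_j}$ (resp. $x_{t_{i,q-1}}^{m_i}$) as $x_{t_j}^{m_j}$ (resp. $x_{t_i}^{m_i}$). The only cosmetic difference is that the paper deduces the second identity from the first by swapping $i$ and $j$ and shifting the index, whereas you run the symmetric computation directly; both are fine.
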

\begin{proof}
$x_{t_{i,*}}x_{t_{j}}^{m_{j}}=x_{t_{i,*}}x_{t_{j,p}}x_{t_{j,p}}^{m_{j}-1}=
x_{t_{j,p+1}}x_{t_{i,*-1}}x_{t_{j,p}}x_{t_{j,p}}^{m_{j}-2}\\=..
=x_{t_{j,p+1}}^{m_{j}}x_{t_{i,*-m_{j}}}$.
From lemma  \ref{same_cycle}, we obtain
$x_{t_{i,*}}x_{t_{j}}^{m_{j}}=x_{t_{j}}^{m_{j}}x_{t_{i,*-m_{j}}}$.\\
The second statement is deduced from the first one.
\end{proof}
\begin{lem}\label{dft_cycles}
It holds for  $1 \leq i,j \leq k $ that $x_{t_{i}}^{m_{i}}x_{t_{j}}^{m_{j}}=x_{t_{j}}^{m_{j}}x_{t_{i}}^{m_{i}}$.
\end{lem}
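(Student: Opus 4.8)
The plan is to derive the commutation by sliding one power block past the other one factor at a time, using Lemma \ref{one_dft_cycles} together with Lemma \ref{same_cycle}. If $i=j$ there is nothing to prove, since then $x_{t_i}^{m_i}x_{t_j}^{m_j}=x_{t_i}^{2m_i}$ obviously commutes with $x_{t_i}^{m_i}$; so I would assume $i\neq j$, in which case the two cycles $(t_{i,1},\dots,t_{i,m_i})$ and $(t_{j,1},\dots,t_{j,m_j})$ are disjoint.

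First I would fix a representative and write $x_{t_j}^{m_j}=x_{t_{j,1}}^{m_j}$, which is legitimate by Lemma \ref{same_cycle}. Now I apply the second identity of Lemma \ref{one_dft_cycles}, namely $x_{t_i}^{m_i}x_{t_{j,*}}=x_{t_{j,*+m_i}}x_{t_i}^{m_i}$, to push the block $x_{t_i}^{m_i}$ to the right past the leftmost letter $x_{t_{j,1}}$: this replaces that letter by $x_{t_{j,1+m_i}}$ and leaves $x_{t_i}^{m_i}$ confronting the next copy of $x_{t_{j,1}}$. Iterating this $m_j$ times (all second indices read modulo $m_j$) yields
\[
x_{t_i}^{m_i}x_{t_j}^{m_j}=x_{t_i}^{m_i}x_{t_{j,1}}^{m_j}=x_{t_{j,1+m_i}}^{m_j}\,x_{t_i}^{m_i}.
\]
Finally Lemma \ref{same_cycle} identifies $x_{t_{j,1+m_i}}^{m_j}$ with $x_{t_j}^{m_j}$, and we conclude $x_{t_i}^{m_i}x_{t_j}^{m_j}=x_{t_j}^{m_j}x_{t_i}^{m_i}$, as required.

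This is essentially a bookkeeping argument, and I do not expect any genuine obstacle. The only mild subtlety is keeping track of the second index modulo $m_j$ throughout the $m_j$ moves and, at the end, re-identifying the shifted power block $x_{t_{j,1+m_i}}^{m_j}$ with $x_{t_j}^{m_j}$ through Lemma \ref{same_cycle} rather than treating it as literally the same word; an entirely symmetric argument (or the first identity of Lemma \ref{one_dft_cycles}) would of course give the same conclusion from the other side.
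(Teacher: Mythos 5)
Your proof is correct and essentially the same as the paper's: both arguments iterate Lemma \ref{one_dft_cycles} to slide one power block past the other and then invoke Lemma \ref{same_cycle} to re-identify the shifted power. The only (cosmetic) difference is that you use the second identity of Lemma \ref{one_dft_cycles}, pushing $x_{t_i}^{m_i}$ past the letters of $x_{t_j}^{m_j}$, whereas the paper uses the first identity, pushing the letters of $x_{t_i}^{m_i}$ past the block $x_{t_j}^{m_j}$.
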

\begin{proof}
From lemma \ref{one_dft_cycles}, we have that
$x_{t_{i,*}}^{m_{i}}x_{t_{j}}^{m_{j}}=
x_{t_{j}}^{m_{j}}x_{t_{i,*-m_{j}}}^{m_{i}}$.\\ So, we obtain
$x_{t_{i}}^{m_{i}}x_{t_{j}}^{m_{j}}=x_{t_{j}}^{m_{j}}x_{t_{i}}^{m_{i}}$, since from lemma \ref{same_cycle}, $x_{t_{i,*-m_{j}}}^{m_{i}}=x_{t_{i}}^{m_{i}}$.
\end{proof}
\begin{lem}\label{stable_point_commute}
It holds that $x_{s_{j}}x_{t_{i}}^{m_{i}}=x_{t_{i}}^{m_{i}}x_{s_{j}}$, for each $1 \leq j \leq l $ and $1 \leq i \leq k$.
\end{lem}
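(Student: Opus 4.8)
The plan is to use the explicit form of the defining relations of a permutation solution together with the fact that $s_j$ is a fixed point of $f$, and then reduce the index bookkeeping to Lemma~\ref{same_cycle}.

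First I would recall that for a permutation solution $S(a,b)=(f(b),f^{-1}(a))$, so the defining relation attached to the pair $(a,b)$ is $x_ax_b=x_{f(b)}x_{f^{-1}(a)}$ in $M$. Since $s_j$ is fixed by $f$ we have $f(s_j)=s_j=f^{-1}(s_j)$, and applying the relation with $a=s_j$ and $b=t_{i,p}$ (the $p$-th entry of the $i$-th cycle) gives $x_{s_j}x_{t_{i,p}}=x_{f(t_{i,p})}x_{s_j}=x_{t_{i,p+1}}x_{s_j}$, where the index in the $i$-th cycle is read modulo $m_i$. In other words, $x_{s_j}$ commutes past a single cycle generator at the cost of advancing its index by one.

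Next I would iterate this $m_i$ times: pushing $x_{s_j}$ rightward through the word $x_{t_{i,p}}\cdots x_{t_{i,p}}$ ($m_i$ factors), each use of the relation replaces one occurrence of $x_{t_{i,p}}$ by $x_{t_{i,p+1}}$ to the left of $x_{s_j}$, so that $x_{s_j}x_{t_{i,p}}^{m_i}=x_{t_{i,p+1}}^{m_i}x_{s_j}$ in $M$. (The same computation shows more generally that $x_{s_j}$ pushed through any word $x_{c_1}\cdots x_{c_r}$ produces $x_{f(c_1)}\cdots x_{f(c_r)}x_{s_j}$, which is the fixed-point analogue of the shift used in Lemma~\ref{one_dft_cycles}.)

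Finally, by Lemma~\ref{same_cycle} all $m_i$-th powers of generators lying in the $i$-th cycle coincide, so $x_{t_{i,p+1}}^{m_i}=x_{t_i}^{m_i}=x_{t_{i,p}}^{m_i}$; substituting this into the identity just obtained yields $x_{s_j}x_{t_i}^{m_i}=x_{t_i}^{m_i}x_{s_j}$, as claimed. I do not expect any genuine obstacle: the argument is exactly the analogue of Lemma~\ref{one_dft_cycles} with the second cycle replaced by the length-one cycle $(s_j)$, and the only point needing a moment's care is the index arithmetic modulo $m_i$, which is precisely what Lemma~\ref{same_cycle} is there to absorb.
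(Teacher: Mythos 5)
Your proof is correct and essentially matches the paper's: both push $x_{s_j}$ through the $m_i$-th power of a cycle generator using the single defining relation between $x_{s_j}$ and a cycle entry, and then invoke Lemma~\ref{same_cycle} to identify the index-shifted power with $x_{t_i}^{m_i}$. The only cosmetic difference is direction — the paper uses $S(t_{i,p},s_j)=(s_j,t_{i,p-1})$ to move $x_{s_j}$ leftward, while you use $S(s_j,t_{i,p})=(t_{i,p+1},s_j)$ to move it rightward.
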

\begin{proof}
From  $S(t_{i,p},s_{j})=(s_{j},t_{i,p-1})$, we have
$x_{t_{i,p}}x_{s_{j}}=x_{s_{j}}x_{t_{i,p-1}}$.\\
So, inductively we have $x_{s_{j}}x_{t_{i}}^{m_{i}}=x_{t_{i}}^{m_{i}}x_{s_{j}}$, using lemma \ref{same_cycle}.
\end{proof}
\begin{prop}\label{delta_permut}
The element $\Delta= x_{t_{1}}^{m_{1}}x_{t_{2}}^{m_{2}}..x_{t_{k}}^{m_{k}}x_{s_{1}}..x_{s_{l}}$
is a Garside element.
\end{prop}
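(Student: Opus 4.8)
The plan is to recognize $\Delta$ as the right lcm of the atoms $x_{1},\dots,x_{n}$ and then to apply Theorem~\ref{thm_delta_lcm_atoms}. A permutation solution has $g=f^{-1}$, hence is non-degenerate, involutive and braided, so by Theorem~\ref{theo:garside} the monoid $M$ is Garside; in particular it is Gaussian, it is atomic (all defining relations preserve length), and it satisfies $(C_{2})$. The first thing to note is that $\parallel\Delta\parallel = m_{1}+\dots+m_{k}+l = n$, since the disjoint cycles of lengths $m_{1},\dots,m_{k}$ together with the fixed points $s_{1},\dots,s_{l}$ partition $\{1,\dots,n\}$, and the displayed word for $\Delta$ realizes its norm because length is invariant under the relations.

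The core step is to prove that every generator left-divides $\Delta$, i.e.\ that $\Delta$ is a common right multiple of all the atoms. For a fixed point $s_{j}$ we have $S(s_{i},s_{j})=(f(s_{j}),f^{-1}(s_{i}))=(s_{j},s_{i})$, so $x_{s_{i}}x_{s_{j}}=x_{s_{j}}x_{s_{i}}$; combining this with Lemma~\ref{stable_point_commute}, we may push $x_{s_{j}}$ to the left past $x_{s_{j-1}},\dots,x_{s_{1}}$ and then past $x_{t_{k}}^{m_{k}},\dots,x_{t_{1}}^{m_{1}}$, obtaining $\Delta=x_{s_{j}}\,w$ for some $w\in M$. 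For a letter $x_{t_{i,p}}$ from the $i$-th cycle, Lemma~\ref{dft_cycles} lets us commute the block $x_{t_{i}}^{m_{i}}$ to the front past $x_{t_{i-1}}^{m_{i-1}},\dots,x_{t_{1}}^{m_{1}}$, so $\Delta=x_{t_{i}}^{m_{i}}\,w'$; and by Lemma~\ref{same_cycle}, $x_{t_{i}}^{m_{i}}=x_{t_{i,p}}^{m_{i}}=x_{t_{i,p}}\,x_{t_{i,p}}^{m_{i}-1}$ (note $m_{i}\ge 2$ for a genuine cycle), so $x_{t_{i,p}}$ left-divides $x_{t_{i}}^{m_{i}}$ and hence $\Delta$.

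To finish, since the atoms have the common right multiple $\Delta$ and $M$ is Gaussian, they possess a right lcm $L$; by Theorem~\ref{thm_delta_lcm_atoms} this $L$ is a Garside element of $M$, and by Theorem~\ref{thm_Garside_length_n}, $\parallel L\parallel=n$. Because $L$ left-divides every common right multiple of the atoms, $\Delta=L\,v$ for some $v\in M$; taking norms, $n=\parallel\Delta\parallel\ge\parallel L\parallel+\parallel v\parallel=n+\parallel v\parallel$, whence $\parallel v\parallel=0$ and therefore $v=1$, as $1$ is the unique element of zero norm in an atomic monoid. Thus $\Delta=L$ is a Garside element.

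The only genuine work is the bookkeeping in the middle paragraph: tracking which of Lemmas~\ref{same_cycle}, \ref{dft_cycles} and~\ref{stable_point_commute} justifies each transposition when moving a prescribed generator to the front of $\Delta$, together with the elementary identity $S(s_{i},s_{j})=(s_{j},s_{i})$ for two fixed points, which is the one ingredient not already isolated as a lemma. Everything else is a direct citation of results established above.
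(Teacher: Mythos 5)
Your proof is correct, but it takes a genuinely different route from the paper. The paper verifies the definition of a Garside element head-on: it shows that the left divisors of $\Delta$ coincide with the right divisors, and the bulk of its proof is the bookkeeping needed to see that a left divisor such as $x_{t_{i}}^{m_{i}}x_{t_{j}}^{q}$ (or $x_{s_{j}}x_{t_{i}}^{q}$) with $q<m_{j}$ is also a right divisor, by repeated use of Lemmas \ref{same_cycle}--\ref{stable_point_commute}. You instead use only the easier half of that bookkeeping (each atom left-divides $\Delta$), observe that $\parallel\Delta\parallel=n$ because the cycles and fixed points partition $\{1,\dots,n\}$ and the relations preserve length, and then identify $\Delta$ with the right lcm $L$ of the atoms via the norm count $n=\parallel\Delta\parallel\geq\parallel L\parallel+\parallel v\parallel=n+\parallel v\parallel$, importing Theorems \ref{thm_delta_lcm_atoms} and \ref{thm_Garside_length_n} from Section 5 to conclude that $L$ is Garside of length $n$. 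What your approach buys is economy and robustness: you never have to match left and right divisors of $\Delta$ by hand, and the delicate case analysis of the paper's proof disappears; the small price is the extra observation that two fixed points commute, $S(s_{i},s_{j})=(s_{j},s_{i})$, which the paper's lemmas do not isolate (though its own proof implicitly uses it too). What the paper's direct verification buys is independence from Section 5 and an explicit description of the divisor structure of $\Delta$, which is of some interest in itself; your argument, by contrast, yields the additional identification $\Delta=x_{1}\vee\dots\vee x_{n}$ essentially for free.
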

\begin{proof}
In order to show that $\Delta$ is a Garside element, we need to show: \\ (1) the left divisors of $\Delta$ are the same as the right divisors.\\
(2) there is a finite number of them.\\
(3) they generate $M$.\\
Condition (2) holds obviously.
From the precedent lemmas, it results that all the elements from $X$ are left and right divisors of $\Delta$, so condition (3) holds.
So, it remains to show that condition (1) holds.\\
Any element which is the product in any order of elements of the form $x_{t_{i}}^{m_{i}}$  and $x_{s_{j}}$ for any $i,j$ is a left and a right divisor, from the commutativity relations obtained in lemma \ref{dft_cycles}. The problems arise with elements of the form
$x_{t_{i_{1}}}^{m_{i_{1}}}..x_{t_{i_{o}}}^{m_{i_{o}}}x_{t_{i_{p}}}^{q}$ or $x_{s_{j}}x_{t_{i_{p}}}^{q}$ ($q < m_{i_{p}}$) which are left divisors but it is not clear at all that they are right divisors and with elements of the form $x_{t_{i_{1}}}^{q}x_{t_{i_{2}}}^{m_{i_{2}}}..x_{t_{i_{p}}}^{m_{i_{p}}}$ or $x_{t_{i_{1}}}^{q}x_{s_{j}}$ ($q < m_{i_{p}}$)
which are right divisors but it is not clear at all that they are left divisors.
We solve the problem with elements of the form $x_{t_{i_{1}}}^{m_{i_{1}}}..x_{t_{i_{o}}}^{m_{i_{o}}}x_{t_{i_{p}}}^{q}$ and $x_{s_{j}}x_{t_{i_{p}}}^{q}$ ($q < m_{i_{p}}$) and the same kind of proof holds for the second type of elements. It is sufficient to look at an element of the form
$x_{t_{i}}^{m_{i}}x_{t_{j}}^{q}$ and inductively obtain the result. We will show that this element is also a right divisor of $\Delta$. It holds that $\Delta$ is equal to an  element $ux_{t_{i}}^{m_{i}}x_{t_{j}}^{m_{j}}$, for some $u$.\\
So, $ux_{t_{i}}^{m_{i}}x_{t_{j}}^{m_{j}}=ux_{t_{i}}^{m_{i}}x_{t_{j,*}}^{m_{j}-q}x_{t_{j,*}}^{q}=
ux_{t_{j,*+m_{j}-q}}^{m_{j}-q}x_{t_{i}}^{m_{i}}x_{t_{j,*}}^{q}$, \\ using lemma \ref{one_dft_cycles} $m_{j}-q$ times.
So, $x_{t_{i}}^{m_{i}}x_{t_{j}}^{q}$ is a right divisor of $\Delta$.\\
 Now, we show that $x_{s_{j}}x_{t_{i}}^{q}$ ($q < m_{i}$) is a right divisor of $\Delta$.
It holds that $\Delta$ is equal to the element $ux_{s_{j}}x_{t_{i}}^{m_{i}}$, for some $u$.
So, $ux_{s_{j}}x_{t_{i}}^{m_{i}}=
ux_{s_{j}}x_{t_{i,*}}^{m_{i}-q}x_{t_{i,*}}^{q}=
ux_{t_{i,*+m_{i}-q}}^{m_{i}-q}x_{s_{j}}x_{t_{i,*}}^{q}$, using lemma \ref{stable_point_commute} $m_{i}-q$ times.
So, $x_{s_{j}}x_{t_{i}}^{q}$  is a right divisor of $\Delta$.
\end{proof}
\textbf{Example 2 cont':}
Let $X=\{x_{1},x_{2},x_{3}\}$ and $S(x_{i},x_{j})=(f(j),f^{-1}(i))$, where $f=(1,2,3)$, be a permutation solution.
Then it holds that $x_{1}^{3}=x_{2}^{3}=x_{3}^{3}$ in $M$ and each word $x_{i}^{3}$ for $i=1,2,3$ represents a Garside element, denoted by $\Delta$. \\The set of simples is $\chi=\{\epsilon, x_{1},x_{2},x_{3},x_{1}^{2},x_{2}^{2},x_{3}^{2},\Delta\}$. The exponent of $M$ (see section $3.3$) is equal to 1, since the function $x \rightarrow (x\setminus \Delta)\setminus \Delta$ from $\chi$ to itself is the identity.
As an example, the image of $x_{1}$ is $(x_{1}\setminus \Delta)\setminus \Delta=x_{1}^{2}\setminus \Delta=x_{1}$,
the image of $x_{2}^{2}$ is $(x_{2}^{2}\setminus \Delta)\setminus \Delta=x_{2}\setminus \Delta=x_{2}^{2}$ and so on.
So, from Theorem \ref{thm_deltapure_indecomp} and Proposition \ref{center_cyclic}, the center of the structure group of $(X,S)$ is cyclic and generated by $\Delta$.

\section{Appendix: Proof of Claim \ref{M_right_cancel}: $M$ is right cancellative}
In \cite{gateva+van}, using a different approach, the authors show that the monoids of left I-type are cancellative.
In order to make the paper self-contained, we give in this appendix our proof.
As we shall see, the proof of the right cancellativity is a 'rewriting ' of the
 proof from \cite{garside}, with the relevant alterations. Let
$w_{1},w_{2}$ be words in the free monoid generated by $X$. If
$w_{1}$ is transformed into  $w_{2}$ by a sequence of $t$ single
applications of the defining relations, then the whole
transformation will be said to be of \textbf{chain-length} $t$.
The right cancellativity of $M$  will be proved in a direct proof
by induction on the length of the words and the chain-length of
transformations, but before we need the following lemma.

\begin{proof}
Assume that $w_{1}a=w_{2}a$ in $M$, with $w_{1},w_{2}$ words in the free monoid
generated by $X$ and $a \in X$.
Since the defining relations are length-preserving, $w_{1}$ and $w_{2}$ have the same length.
We will show that $w_{1}=w_{2}$ by induction on the length of   $w_{1}$ (and $w_{2}$) and on the
 chain-length of transformations.\\
If $w_{1}=w_{2}=\epsilon$, then there is nothing to prove.\\
Assume that $w_{1},w_{2} \in X$, that is there are $1 \leq k,l
\leq n$ such that $w_{1}=x_{k}$  and $w_{2}=x_{l}$. We will show
that necessarily  $x_{k}=x_{l}$.
 From claim \ref{cl_compl}, we have  that $x_{k}a=x_{l}a$ implies $x_{k}=x_{l}$.\\
Now, assume that $w_{1}a=w_{2}a$ in $M$ implies  $w_{1}=w_{2}$ in $M$ when  :\\
(*) the length of $w_{1}$ is less or equal to   $m-1$ and the transformations  have any chain-length.\\
and\\
(**) the length of $w_{1}$ is greater or equal  to  $m$ and  the chain-length is less or equal to $q-1$.\\
Assume that $w_{1}$ and  $w_{2}$ have length $m$ and  that
$w_{1}a=w_{2}a$ in $M$ through a transformation of chain-length $q$. Let
the successive words of the transformation be $u_{1}=w_{1}a$,
$u_{2}=..$,..,$u_{q+1}=w_{2}a$. Choose arbitrarily any
intermediate word $u_{p}$ in the chain, so $u_{p}=wa'$ in $M$ for some
word $w$ and $a' \in X$. So,  $w_{1}a=wa'=w_{2}a$ in $M$ and each
transformation  $w_{1}a=wa'$ and $wa'=w_{2}a$  is of chain-length
less than $q$. If $a=a'$ then the induction hypothesis can  be
applied, i.e  $w_{1}=w$ and $ w=w_{2}$, so $w_{1}=w_{2}$. Assume
$a \neq a'$. Then from lemma \ref{lem_cancel} there is only one
defining relation of the form $x_{i} a= x_{j} a'$, where $x_{i},
x_{j}$ are in $X$. So, we have that  $w_{1}=Px_{i}$ and
$w=Px_{j}$ in $M$. On the other hand, we have that  $w=Qx_{j}$ and
$w_{2}=Qx_{i}$ in $M$. So, $w=Qx_{j}=Px_{j}$in $M$ and by induction hypothesis
this implies $P=Q$ and then $w_{1}=w_{2}$ in $M$. That is $M$ is right cancellative.
\end{proof}


\begin{thebibliography}{20}
\bibitem{charney} R.Charney, J.Meier and K.Whittlesey, Bestvina's normal form complex and the homology of Garside groups, Geo.Dedicata 105 (2004), p.171-188.
    \bibitem{Clifford} A.H.Clifford, G.B.Preston, The algebraic theory of semigroups. Mathematical Surveys, No. 7 AMS, Providence,R.I.1967.
\bibitem{deh_francais} P.Dehornoy, Groupes de Garside, Ann.Scient.Ec.Norm.Sup.35 (2002),p.267-306.
\bibitem{deh_livre} P.Dehornoy, Braids and Self-Distributivity, Progress in Math.192, Birkhauser (2000).
\bibitem{deh_torsion}P.Dehornoy, Gaussian groups are torsion free,
 J. of Algebra 210 (1998),p.291-297.
 \bibitem{deh_complte}P.Dehornoy, Complete positive group presentations, J.Algebra 268 (2003), p.156-197.
\bibitem{deh_homologie} P.Dehornoy, Y.Lafont,  Homology of Gaussian groups, Ann.Inst.Fourier(Grenoble)53(2003),
 no. 2, p.489-540.
\bibitem{deh_Paris} Gaussian groups and Garside groups: two generalizations of Artin groups, Proc.London Math.Soc.79(1999), p.569-604.
\bibitem{drinf} V.G.Drinfeld, On some unsolved problems in quantum group theory, Lec.Notes Math.1510 (1992), p.1-8.
    \bibitem{etingof} P.Etingof, T.Schedler, A.Soloviev, Set-Theoretical
Solutions to the Quantum Yang-Baxter equation, Duke Math. J. 100
(1999), no. 2, p.169-209.
\bibitem{garside} F.A.Garside, The braid group and other groups,
Quart.J.Math.Oxford 20 N.78(1969),p.235-254.
\bibitem{gateva} T.Gateva-Ivanova, A combinatorial approach to the set-theoric solutions of the Yang–Baxter equation, J.Math.Phys.45(10)(2005), p.3828–3858.

\bibitem{gateva+van}T.Gateva-Ivanova, M.Van den Bergh, Semigroups of $I$-type, J. Algebra 206 (1998), No.1, p.97-112.
\bibitem{jespers+okninski}E.Jespers, J.Okninski, Monoids and groups of I-type, Algebras and Representation Theory 8(2005), p.709-729.
\bibitem{picantin} M.Picantin, The center of thin Gaussian groups,
J.Algebra 245 (2001), p.92-122.
\bibitem{rump} W.Rump, A decomposition theorem for square-free unitary solutions of the quantum Yang-Baxter equation, Advances in Math. 193 (2005), p.40-55.
\end{thebibliography}
\end{document}